\newcommand{\R}{\mathbb{R}}
\newcommand{\C}{\mathbb{C}}
\newcommand{\N}{\mathbb{N}}
\newcommand{\T}{\mathbb{T}}
\newcommand{\Pc}{\mathcal{P}}
\newcommand{\Cheb}{\mathrm{Cheb}}
\newcommand{\Oc}{\mathcal{O}}
\newcommand{\ee}{\varepsilon}
\newcommand{\lo}{\longrightarrow}
\newcommand{\li}{\left}
\newcommand{\re}{\right}
\newcommand{\mi}{\,\,\big|\,\,}
\newcommand{\rank}{\mathrm{rank}}
\newcommand{\GR}{\mathrm{Gr}}
\newtheorem{theo}{Theorem}[section]
\newtheorem{theorem}{Theorem}[section]
\newtheorem{lemma}[theorem]{Lemma}
\newtheorem{corollary}[theorem]{Corollary}
\theoremstyle{definition}
\newtheorem{definition}[theorem]{Definition}
\theoremstyle{remark}
\newtheorem{remark}[theorem]{Remark}
\numberwithin{equation}{section}
\newtheorem{experiment}{Experiment}
\newtheorem{quest}{Question}
\begin{document}

\title[Multivariate Interpolation in Unisolvent Nodes]{Multivariate Interpolation in Unisolvent Nodes \\ Lifting the Curse of Dimensionality}

\author[M. Hecht]{Michael Hecht}

\author[K.~Gonciarz]{Krzysztof Gonciarz}
\author[J.~Michelfeit]{Jannik Michelfeit}
\author[V.~Sivkin]{Vladimir Sivkin}
\author[I.F.~Sbalzarini]{Ivo F.~Sbalzarini}

\subjclass[2020]{Primary 65D15, 41A50, 41A63, 41A05 ; Secondary 41A25, 41A10 }



\keywords{Newton interpolation, Lagrange interpolation, unisolvent nodes, multivariate approximation, Runge's phenomenon}

\begin{abstract} We extend Newton and Lagrange interpolation to arbitrary dimensions.
The core contribution that enables this is a generalized notion of \emph{non-tensorial unisolvent nodes}, i.e., nodes on which the multivariate polynomial interpolant of a function is unique.
By validation, we reach the optimal exponential Trefethen rates for a class of analytic functions, we term  Trefethen functions.
The number of interpolation nodes required for computing the optimal interpolant depends sub-exponentially on the dimension, hence resisting the curse of dimensionality.
Based on these results, we propose an algorithm to efficiently and numerically stably solve arbitrary-dimensional interpolation problems,
with at most quadratic runtime and linear memory requirement.
\end{abstract}

\maketitle

\section{Introduction}

\label{intro}
Polynomial interpolation goes back to Newton, Lagrange, and others \cite{LIP}, and its fundamental importance for mathematics and computing is undisputed.
Interpolation is based on the fact that, in 1D, one and only one polynomial $Q_{f,n}$ of degree $n$ can interpolate a function
$f : \R \lo \R$ on $n+1$ distinct \emph{unisolvent interpolation nodes} $P_n \subseteq \R$, i.e., $Q_{f,n}(p_i) =f(p_i)$ for all $p_i \in P_n$, $0\leq i \leq n$.
This makes interpolation fundamentally different from approximation. For the latter, the famous \emph{Weierstrass Theorem} \cite{weier1}
states that any continuous function $f \in C^0(\Omega,\R)$, $\Omega=[-1,1]$, can be uniformly approximated by
polynomials in principle~\cite{weier2,glimm,weier1,stephen}. However, the Weierstrass Theorem does not require the polynomials to coincide with $f$ at all, i.e.,
it is possible that $Q_{\mathrm{Weierstrass},f,n}(x) \not = f(x)$ for all $x\in \Omega$, but still
\begin{equation}
Q_{\mathrm{Weierstrass},f,n} \xrightarrow[n \rightarrow \infty]{} f \quad \text{uniformly on}\,\,\, \Omega\,.
\end{equation}
Even though the constructive version of the Weierstrass Theorem given by Serge Bernstein \cite{bernstein1912} provides a recipe for computing such approximations it only delivers a linear convergence rate.
In 1D, however, interpolation on Chebyshev and Legendre nodes is known to avoid Runge's phenomenon for a generic class of functions and to yield exponential
approximation rates \cite{Lloyd}, which is much faster than what has been shown possible by Weierstrass-type approximations \cite{bernstein1912}.
There has therefore been much research into faster multi--dimensional ($m$D) interpolation schemes that extend Newton or Lagrange interpolation schemes to arbitrary dimensions.
Any approach that addresses this problem has to resolve \emph{Runge's phenomenon} \cite{faber,jackson1913,runge} and resist the \emph{curse of dimensionality}, thus, has to answer the following question:

\begin{quest}\label{Q1}
How to construct interpolation nodes $P_{A_{m,n}} \subseteq \Omega  =[-1,1]^m$, $m,n \in \N$, $A_{m,n}\subseteq \N^m$ and an efficient and numerically stable interpolation algorithm such that a generic class of functions
$f : \Omega  \lo \R$ can be uniformly approximated
$$Q_{f,A_{m,n}}   \xrightarrow[n \rightarrow \infty]{}  f \,, \qquad Q_{f,A_{m,n}} (q) = f(q) \,, \,\,\, \forall\, q \in P_{A_{m,n}}\,,$$ with a fast (ideally exponential) convergence
rate while keeping the number of interpolation nodes $|P_{A_{m,n}}|$ required small (ideally, sub-exponential)?
\end{quest}
Question~\ref{Q1} implicitly requires answering the underlying algebraic question of \emph{unisolvence}:

\begin{quest}\label{Q2} Given a polynomial space $\Pi=\mathrm{span}\{q_{\alpha}\}_{\alpha \in A}$, $A\subseteq \N^m$ generated by some polynomial basis.
How to construct unsisolvent interpolation nodes $P_{A} \subseteq \Omega $,  such that the polynomial interpolant
$Q_{f,A}$, with $Q_{f,A}(p) =f(p) \,, \forall p \in P_A$, of any function $f : \Omega \lo \R$ is uniquely determined in $\Pi$?
\end{quest}

While many approaches exist to extending polynomial interpolation to higher dimensions
 \cite{Bos,deBoor2,deBoor,Erb,dyn,Gasca,sauer,Guenther,2000,MultiVander,sauerL,Lloyd}, providing partial answers to Questions~\ref{Q1} and \ref{Q2},
none of them fully answers these questions.

As far as we recognize, tensorial Chebyshev interpolation \cite{chebfun,Lloyd2,Lloyd} best answers these questions among all state-of-the-art approaches.
However, Chebyshev interpolation is done on a (full) tensorial grid, which suffers from the \emph{curse of dimensionality} by requiring $|P_{A_{m,n}}| \in \Oc(n^m)$  interpolation nodes.
Therefore,  practical implementations are usually limited to dimensions $m \leq 3$.

%

Here, we therefore provide a generalized notion of \emph{unisolvent interpolation nodes}, which allows for non-tensorial grids on which we reach exponential
convergence rates for the Runge function, as a prominet example of a  Trefethen function. We show that the number of nodes required scales sub-exponentially with space dimension. We therefore believe that the present generalization of unisolvent nodes to non-tensorial grids is key to lifting the curse of dimensionality.  Our results also directly inspire an efficient algorithm to practically solve high-dimensional interpolation problems. We therefore provide a numerically robust and computationally efficient algorithm and its software implementation, and we use it to empirically verify our theoretical predictions.
Combining sub-exponential node numbers with exponential approximation rates, non-tensorial unisolvent nodes are thus able to \emph{lift the curse of dimensionality} for multivariate interpolation tasks.

In the following extensive introduction we  discuss subjects across these fields and summarize the state of the art from previous works.

\subsection{Runge's phenomenon -- Approximation theory}
Already in 1D, it is long known \cite{faber,jackson1913,runge} that for any sequence of interpolation nodes $P_n \subseteq \Omega$, $n \in \N$
there exists at least one continuous function $f : \Omega \lo \R$  that can not be approximated by interpolation on $P_n$, i.e.,
\begin{equation}\label{CW}
  Q_{f,n} \centernot{\xrightarrow[n \rightarrow \infty]{}} f \,, \qquad \text{where}\quad  Q_{f,n}(q) = f(q) \,,\,\forall\, q \in P_n\,.
\end{equation}
Instead, the approximation quality of an interpolation polynomial is sensitive to the choice of the interpolation nodes $P_n \subseteq \Omega$. In other words:
Interpolating $f$ with a polynomial $Q_{f,n}$ of increasing degree $n \in \N$ does not guarantee that the interpolant $Q_{f,n}$ converges to $f$.
This fact is famously known as \emph{Runge's phenomenon} \cite{runge}. Hence, a universal interpolation scheme
that approximates all continuous functions does not exist.

Note that while multi-dimensional versions of the Weierstrass Theorem state that any continuous function $f \in C^0(\Omega,\R)$
can be approximated by polynomials in principle \cite{glimm,Sob,stephen}, asking whether such polynomials can be determined by interpolation on specified nodes $P_A$ is in regard of Runge's phenomenon,
Eq.~\eqref{CW}, a different question.
%

Whatsoever, any answer to Questions~\ref{Q2} that is to be of practical relevance
must provide a recipe to construct interpolation nodes $P_A$ that allow efficient approximation while resisting the curse of dimensionality in terms of Question~\ref{Q1}.

\subsection{Lifting the curse of dimensionality}

Recently, Lloyd N. Trefethen \cite{Lloyd2} proposed a way of delivering  a potential solution to the problem: For continuous functions $f : \Omega \lo \R$
that are analytic in the unbounded \emph{Trefethen domain} (a genralization of a Bernstein ellipse) $N_{m,\rho}\subsetneq \Omega =[-1,1]^m$, of  radius  $\rho >1$, an upper bound on the convergence rate applies:
\begin{equation}\label{Rate}
  \| f - Q_{f,A_{m,n,p}}\|_{C^0(\Omega)} \in  \li\{\begin{array}{ll}
                                                                      \Oc_\ee(\rho^{-n/\sqrt{m}})  &\,, \quad p =1 \\
                                                                      \Oc_\ee(\rho^{-n}) &\,, \quad p =2\\
                                                                      \Oc_\ee(\rho^{-n})  &\,, \quad p =\infty
                                                                      \end{array}
\re.\,,
\end{equation}
where $g \in \Oc_\ee(\rho^{-n})$ iff $g \in \Oc((\rho-\ee)^{-n})$, $\forall \ee >0$.

The multi-index sets $A_{m,n,p} =  \{\alpha \in \N^m \mi \|\alpha\|_p \leq n \} \subseteq \N^m$ generalize the notion of polynomial degree
to multi-dimensional $l_p$-degree.
This suggests that interpolating a function with respect to
the polynomial space $\Pi_{m,n,2} =\mathrm{span} \{x^\alpha\}_{\alpha \in A_{m,n,2}}$ spanned by all $l_2$-monomials {\em can reach} the same convergence rate as interpolating with respect to the $l_\infty$-degree $A_{m,n,\infty}$,
while $l_1$-degree can not reach such a fast rate. Indeed, this expectation is validated and argued to be genuine in \cite{Lloyd2}.

The number of coefficients  $|A_{m,n,1}| = \binom{m+n}{n} \in \Oc(m^n)$ required is of polynomial cardinality for $p=1$, whereas $|A_{m,n,2}|\in o(n^m)$ is of sub-exponential size for $p=2$,
but $|A_{m,n,\infty}| = (n+1)^m$ scales exponentially with dimension $m \in \N$ for $p =\infty$.
Thus, combining sub-exponential node numbers with exponential approximation rates, interpolation with respect to $l_2$-degree polynomials might yield a way of lifting the curse of dimensionality and answering Question~\ref{Q1}.

However, in \cite{Lloyd2} the interpolants $Q_{f,A_{m,n,1}},Q_{f,A_{m,n,2}}$ were computed by regression over a full Chebyshev grid $P_{A_{m,n,\infty}}$,
which requires evaluating $f$ on the exponentially many $|A_{m,n,\infty}| = (n+1)^m$ nodes $P_{A_{m,n,\infty}}$.
In other words: There is no numerically stable and efficient algorithm known that can compute $Q_{f,A_{m,n,2}}$ by evaluating $f$ only on $|A_{m,n,2}|$ unisolvent nodes while reaching the optimal Trefethen approximation rates.

\subsection{Interpolation with optimal Trefethen rates -- The core contribution of this article}
\label{sec:CD}

Here, we answer Questions~\ref{Q1}--\ref{Q2}.
To do so, we generalize the notion of unisolvent nodes $P_{A}$, $A\subseteq \N^m$ to non-tensorial grids. This allows us to extend Newton (NI) and Lagrange (LI) interpolation to arbitrary-dimensional spaces such that:
\begin{enumerate}
 \item[P1)] $P_{A}$ are given by non--tensorial, non--symmetric grids, $A = A_{m,n,p}$, scaling sub-exponentially with dimension $m$ if $p<\infty$.
 \item[P2)] Computing the uniquely determined multivariate interpolant $Q_{f,A} \in \Pi_A$,
 requires $\Oc(|A|^2)$ or $\Oc(|A|)$ runtime for NI and LI, respectively, and $\Oc(|A|)$ memory.
 \item[P3)] Evaluating the interpolant $Q_{f,A}(x)$ at any argument $x \in \R^m$ requires $\Oc(|A|)$ or $\Oc(|A|^2)$ runtime for NI and LI, respectively, and $\Oc(|A|)$ memory.
  \item[P4)] Numerical experiments validate that the  Runge function $f(x) = 1/(1+10\|x\|^2)$ can be interpolated on $P_{A_{m,n,2}}$ to machine precision up to dimension $5$ while reaching
the optimal Trefethen rate $\Oc(\rho^{-n})$ stated in Eq.~\eqref{Rate} with the theoretically predicted Trefethen radius $\rho$.
\end{enumerate}
We want to add the following remarks:
\begin{remark}
 If one only requires the nodes $P_A$ to be unisolvent, then they do not have to be given by a (sub) grid at all. The nodes used for the present $m$D Newton interpolation are given by a sub-grid,
             but that sub-grid is neither symmetric nor tensorial. Nevertheless, we reach the optimal exponential Trefethen rates on these grids.
            If we would add nodes to make the grid symmetric or tensorial, then
            the number of nodes of the resulting (sparse) tensorial grid would scale exponentially $\Oc(n^m)$ with space dimension $m \in \N$. In contrast, our proposed interpolation nodes scale sub-exponentially $o(n^m)$ and
            thus \emph{lift the curse of dimensionality.}
 \end{remark}
 \begin{remark}
 We complement the established notion of unisolvent nodes by the \emph{dual notion of unisolvence}. That is:  For given arbitrary nodes $P$, determine the  polynomial space $\Pi$ such that
$P$ is unisolvent with respect to $\Pi$. In doing so, we revisit earlier results by Carl de Boor and Amon Ros~\cite{deBoor2,deBoor} and answer their question from our perspective.
 \end{remark}
 \begin{remark}
 Combining our multivariate Lagrange interpolation with the dual notion of unisolvence allows establishing a polynomial regression scheme for scattered data on planar and curved manifolds. In particular, we demonstrate that
the level set function $L$ of the torus $\T^2_{R,r}= L^{-1}(0)$, its gradient $\nabla L$, and the Runge function $f$ restricted to $\T^2_{R,r}$ can be approximated to high (machine) precision with interpolation nodes $P \subseteq \T^2$ sampled uniformly at random on the torus.
\end{remark}

{\bf In summary:} We answer Questions~\ref{Q1}--\ref{Q2} by establishing an efficient $m$D interpolation scheme that can approximate a generic class of functions and,
at least empirically, reaches the proposed exponential approximation rate for strongly varying Trefethen functions, such as the Runge function $f(x) = 1/(1+10\|x\|^2)$, requiring only a sub-exponential number
of non-tensorial interpolation nodes. Combining sub-exponential node numbers with exponential approximation rates, non-tensorial unisolvent nodes are thus able to \emph{lift the curse of dimensionality} for multivariate interpolation tasks.

\subsection{Related Work} The importance of the present problem is manifested in the large number of attempts that were made to solve this central problem of applied mathematics.
Consequently, an exhaustive overview of multivariate interpolation schemes can not be given here.
In the following, we restrict  ourselves to mention the most relevant state-of-the-art approaches.

\subsubsection{Interpolation in tensorial (sub) grids}
There are approaches, e.g.~\cite{dyn, Guenther, kuntz, sauertens}, that interpolate polynomials on (sparse) tensorial grids, which are know to be unisolvent for the corresponding tensorial polynomial spaces.
This, for instance, allows to theoretically derive formulas for Lagrange polynomials on these grids \cite{sauertens}.
However, this does not mean that efficient algorithms to evaluate the resulting interpolants to machine precision are known.
Furthermore, so far none of these approaches is known to reach the optimal Trefethen approximation rates when requiring the number of nodes of the underlying tensorial grids to
scale sub-exponential with space dimension. As the numerical experiments in Section~\ref{sec:NUM} suggest, we believe that only non-tensorial grids are able to lift the curse of dimensionality, which requires
non--tensorial interpolation schemes such as those presented here.

\subsubsection{Weierstrass-type Approximation}
Several approaches \cite{webster,chkifae,cohen2,cohen,cohen3,migli} are available to realize $m$D Weierstrass-type approximations by computing the $L^2$--projections onto a-priori specified
orthogonal polynomials.
However, the linear convergence rate of the Bernstein approximation \cite{bernstein1912} is reflected in the circumstance
that these approaches are prevented from approximating a generic class of functions, but are limited to well-behaving bounded analytical or holomorphic functions
occurring, for instance, as solutions of elliptic PDEs. In these scenarios, reasonable uniform approximations of the function $f$ can be
reached by sparse samples that avoid the curse of dimensionality in high dimensions $m \in \N$, $m \leq 16$.
However, when asking such approaches to deliver approximations to machine precision, or to leave the tight class of well-behaving functions,
their resistance to the curse of dimensionality disappears already for low dimensions.

\subsubsection{Multivariate splines} \label{intro:APP}
A prominent and well established alternative to the above approaches
is the multivariate spline interpolation by Carl de Boor et al. ~\cite{Boor:BS,Boor:tensorSP,Boor:wings,Boor:SP}.
The approximation result \cite{boorapp} for bivariate splines due to de Boor is given as:
\begin{theo}[Carl de Boor] Let $f : \Omega \subseteq \R^2 \lo \R$ be a \mbox{$(n+1)$}-times continuously differentiable  bivariate function, $\Delta$ be a triangulation, and
$S_{f,n,\Delta} = \li\{g \in C^\rho(\Omega,\R) \mi g_{|\delta} \in \Pi_{m,n,1}\,, \,\, \forall\, \delta \in \Delta \re\}$, $ n >3\rho +1$
be the space of \emph{piecewise polynomial functions} of degree $n$.
Then there exists $c(\Delta) >0$ such that
 \begin{equation} \label{RC}
  \mathrm{dist}(f , S_{f,n,\Delta}) \leq c(\Delta) \| D^{n+1}f\|_{C^0(\Omega)}|\Delta|^{n+1}\,,
\end{equation}
where $|\Delta| = \max_{T \in \Delta} |T|$ is the mesh size.
\end{theo}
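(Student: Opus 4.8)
The plan is to prove the estimate by building a local, polynomial-reproducing \emph{quasi-interpolation operator} $Q : C^{n+1}(\Omega,\R) \lo S_{f,n,\Delta}$ and combining its reproduction property with a local Taylor expansion in the spirit of the Bramble--Hilbert lemma. First I would record the elementary local bound: for each triangle $T \in \Delta$, Taylor's theorem with integral remainder produces a polynomial $p_T \in \Pi_{2,n,1}$ (the degree-$n$ Taylor jet of $f$ at the barycenter of $T$) with $\|f - p_T\|_{C^0(T)} \leq c_0\,\|D^{n+1}f\|_{C^0(T)}\,|T|^{n+1}$ for an absolute constant $c_0$. Taken by itself this already has the right order in $|\Delta|$, but the family $(p_T)_{T\in\Delta}$ does not glue to a function in $C^\rho(\Omega,\R)$ and hence is not an admissible competitor for $\dist(f,S_{f,n,\Delta})$.

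The heart of the argument is therefore the construction of $Q$ with three properties: (i) $Q$ is linear and \emph{local}, i.e.\ $(Qf)_{|T}$ depends only on the values of $f$ on a neighborhood $\Omega_T$ of $T$ consisting of a bounded number of triangles; (ii) $Q$ \emph{reproduces polynomials}, $Qp = p$ for all $p \in \Pi_{2,n,1}$; and (iii) $Q$ is \emph{stable}, $\|Qf\|_{C^0(T)} \leq c_1\,\|f\|_{C^0(\Omega_T)}$, with $c_1$ depending only on the smallest angle of $\Delta$. Such an operator is obtained from the Bernstein--B\'ezier representation of $S_{f,n,\Delta}$: one exhibits a \emph{minimal determining set} of domain points together with a dual family of locally supported functionals, and defines the B\'ezier ordinates of $Qf$ by averaging nearby Taylor jets of $f$. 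This is precisely where the hypothesis $n > 3\rho+1$ enters: it is the degree threshold ensuring that the $C^\rho$ spline space over an arbitrary triangulation admits a stable, locally supported basis, so that the $C^\rho$ smoothness conditions across edges can be imposed while still retaining full reproduction of $\Pi_{2,n,1}$ and uniform control of the basis norms.

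With $Q$ in hand the estimate follows triangle by triangle by the usual perturbation trick: since $p_T \in \Pi_{2,n,1}$ is reproduced,
\begin{equation*}
 f - Qf = (f - p_T) - Q(f - p_T) \quad \text{on } \Omega_T,
\end{equation*}
hence $\|f - Qf\|_{C^0(T)} \leq (1+c_1)\,\|f - p_T\|_{C^0(\Omega_T)} \leq c_0(1+c_1)\,\|D^{n+1}f\|_{C^0(\Omega)}\,|\Omega_T|^{n+1}$. Shape regularity of $\Delta$ gives $|\Omega_T| \leq c_2\,|\Delta|$, so taking the maximum over $T \in \Delta$ and setting $c(\Delta) = c_0(1+c_1)c_2^{n+1}$ yields $\dist(f,S_{f,n,\Delta}) \leq \|f - Qf\|_{C^0(\Omega)} \leq c(\Delta)\,\|D^{n+1}f\|_{C^0(\Omega)}\,|\Delta|^{n+1}$, as claimed.

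The main obstacle is steps (ii)--(iii): producing a locally supported, uniformly bounded basis of the $C^\rho$ spline space that still contains $\Pi_{2,n,1}$ on each triangle's stencil. Once this structural fact about spline spaces on triangulations is available, the remainder is Taylor's theorem together with routine bookkeeping of the triangulation constants; the polynomial approximation order $n+1$ in $|\Delta|$ — and, in particular, the absence of any exponential gain — is an intrinsic consequence of the local polynomial reproduction being limited to degree $n$.
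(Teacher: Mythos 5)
The paper does not prove this theorem: it is quoted as background from the cited reference \cite{boorapp} (de~Boor) at the start of Section~1.5 precisely to contrast the polynomial rate of spline approximation with the exponential rate targeted later, so there is no ``paper proof'' to compare against.

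Taken on its own, your outline is the standard and correct route, and it matches what the cited de~Boor--H\"ollig line of work actually does: reduce to a local, linear, polynomial-reproducing, uniformly stable quasi-interpolant $Q$ built from the Bernstein--B\'ezier representation via a minimal determining set, then run the Bramble--Hilbert perturbation $f - Qf = (f - p_T) - Q(f - p_T)$ with $p_T$ the Taylor jet, and absorb stencil overlaps through shape regularity. You also correctly pinpoint where $n > 3\rho + 1$ enters, namely as the degree threshold that guarantees a stable locally supported basis of the $C^\rho$ spline space on an \emph{arbitrary} triangulation with full reproduction of $\Pi_{2,n,1}$; below that threshold the approximation order genuinely degrades, which is why the hypothesis cannot be dropped. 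The one thing to be candid about is that steps (ii)--(iii) --- existence of the stable local basis / minimal determining set with the required reproduction --- are not ``routine bookkeeping'': that structural fact is essentially the entire content of the cited result, and your sketch uses it as a black box. That is a legitimate way to present the argument, but it should be flagged as the load-bearing lemma rather than an afterthought. Two smaller remarks: your constant $c(\Delta) = c_0(1+c_1)c_2^{n+1}$ correctly exposes the dependence on the smallest angle through $c_1$ and $c_2$, which is exactly the ``$c(\Delta)$'' in the statement; and your closing observation that the order $|\Delta|^{n+1}$ is intrinsically limited by degree-$n$ local reproduction is precisely the point the paper is making by quoting this theorem.
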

This result states that any \emph{sufficiently smooth} function $f$ can be approximated by piecewise polynomial functions, which allows to approximate $f$ by Hermite or spline interpolation.
Generalizations of this result rely on this fact and are formulated in a similar manner~\cite{Boor:BS,Boor:tensorSP,Boor:SP}.

Despite its fundamental importance in linking interpolation and approximation, the above result and the resulting interpolation algorithms have some weak points:
\begin{enumerate}[A)]
 \item The strong regularity assumption, i.e., the \mbox{$(n+1)$}-fold differentiablity of $f$;
 \item The error bound in Eq.~\eqref{RC} only guarantees a polynomial convergence rate, but no exponential convergence;
 \item The approach is sensitive to the curse of dimensionality, i.e., the number of polynomial coefficients $C_{m,n}$ scales exponentially with dimension: $|C_{m,n}| \in \Oc(n^m)$.
\end{enumerate}
Especially for Trefethen functions, such as the Runge function $f(x)=1/(1+10\|x\|^2)$,  $(B)$ prevents spline interpolation.
Several improvements have been presented, including Floatman--Hormann interpolation~\cite{floater_rates,floater}, that reach better approximation quality than splines.
However, all of them share the above weaknesses (A,B,C), as we demonstrate in the numerical experiments of Section \ref{sec:NUM}.
Though, approximations of lower accuracy might be reached faster then by polynomial interpolation, this makes these approaches incapable for answering Question \ref{Q1} when higher-precision
approximations are required. The multivariate polynomial interpolation method presented here reaches this goal.

\subsection{Notation}

Let $m,n \in \N$, $p\geq 1$. We denote by  $e_1=(1,0,\dots,0), \dots,e_m = (0,\dots,0,1) \in \R^m$ the standard basis, by $\|\cdot\|$ the euclidean norm on $\R^m$, and by $\|M\|_p$ the $l_p$-norm
of a matrix $M\in \R^{m\times m}$. Further, $A_{m,n,p} \subseteq \N^m$ denotes all multi-indices $\alpha =(\alpha_1,\dots,\alpha_m)\in \N^m$ with $\|\alpha\|_p = (\alpha_1^p+\dots+\alpha_m^p)^{1/p}\leq n$.
We order a finite set $A\subseteq \N^m$, $m \in \N$  of multi-indices with respect to the lexicographical order $\leq_L$ on $\N^m$ starting from $x_m$ to $x_1$, e.g.
$(5,3,1)\leq_L (1,0,3) \leq_L (1,1,3)$. Then, $\alpha_{\min},\alpha_{\max}$ shall denote the minimum and maximum of $A=\{\alpha_{\min},\dots,\alpha_{\max}\}$ with respect to $\leq_L$. We call $A$
\emph{downward closed} or \emph{complete} iff there is no $\beta = (b_1,\dots,b_m) \in \N^m \setminus A$
with $b_i \leq a_i$, $ \forall \,  i=1,\dots,m$ for some $\alpha = (a_1,\dots,a_m) \in A$. In \cite{cohen3} the terminology ``downward closed set'' is used.
Note that $A_{m,n,p}$ is complete for all $m,n\in \N$, $p\geq 1$.
Given $A \subseteq \N^m$ complete and a  matrix $R_A \in \R^{|A|\times|A|}$ we slightly abuse notation by denoting
\begin{equation}
 R_A = (r_{\alpha,\beta})_{\alpha,\beta \in A} = (r_{i,j})_{1 \leq i,j \leq |A|} \,,
\end{equation}
with $\alpha,\beta$ being the $i$-th, $j$-th entry of $A$ ordered by $\leq_L$, respectively.

We consider the \emph{real polynomial ring} $\R[x_1,\dots,x_m]$  in $m$ variables and denote by $\Pi_m$  the $\R$-\emph{vector space of all real polynomials} in $m$ variables.
Further,
$\Pi_{A} \subseteq \Pi_m$ denotes the \emph{polynomial subspace} induced by $A$ and
generated by the \emph{canonical basis} given by the monomials $x^\alpha=x_1^{\alpha_1}\cdots x_m^{\alpha_m}$ with $\alpha \in A$. For $A =A_{m,n,p}$ we write $\Pi_{m,n,p}$ to mean $\Pi_{A_{m,n,p}}$.
Given a polynomial $Q(x) = \sum_{\alpha \in A} c_\alpha x^{\alpha}$, $A \subseteq \N^m$, we call $\max_{\alpha \in A\,, c_\alpha \not =0} \|\alpha\|_p$ the $l_p$-degree of $Q$. As it will turn out, the notion of
$l_p$-degree plays a crucial role for the approximation quality of polynomial interpolation.  While $A_{1,n,p} =\{0,\dots,n\}$, considering $A \subseteq \N^m$ for $m >1$ \emph{generalizes the concept of polynomial degree to multi-dimensions.}

Throughout this article $\Omega=[-1,1]^m$ denotes the $m$-dimensional \emph{standard hypercube} and $C^0(\Omega,\R)$ the
\emph{Banach space}
of continuous functions $f : \Omega \lo \R$ with norm $\|f\|_{C^0(\Omega)} = \sup_{x \in \Omega}|f(x)|$.

\section{Unisolvent nodes}
\label{sec:UN}

While the pioneering works by Kuntzmann, Guenther, and Roetman~\cite{Guenther,kuntz}  and extensions in \cite{chung1977lattices} proposed a partial answer to Question~\ref{Q2}
we provide a generalized notion of unisolvent nodes for multivariate polynomial interpolation with respect
to arbitrary finite-dimensional polynomial spaces $\Pi \subseteq \Pi_m$, i.e., even in the case where $\Pi \not = \Pi_A$ is not induced by a complete or downward closed set $A\subseteq \N^m$.
We start by reviewing the usual notion of unisolvence.

\subsection{The notion of unisolvence}
We consider a (not necessarily downward closed or complete) finite set $A \subseteq \N^m$ of multi-indices, a set of interpolation nodes
$P_A= \{p_{\alpha_{\min}},\dots,p_{\alpha_{\max}}\} \subseteq \R^m$, a set $B_A=\{q_{\alpha_{\min}},\dots,q_{\alpha_{\max}}\}$ of multivariate polynomials e.g. $q_{\alpha}(x) = x^\alpha$,
the polynomial space $\Pi_A = \mathrm{span}(B_A)$ generated by the $q_{\alpha}$, and a function $f : \R^m \lo\R$. The \emph{multivariate Vandermonde matrix} is given by
\begin{equation}
 V(P) = \big (q_{\beta}(p_{\alpha})\big)_{\alpha,\beta \in A} \,.
\end{equation}
For $q_{\alpha}(x) = x^\alpha$ this results in the classic notion $ V(P) = \big (p_{\alpha}^\beta\big)_{\alpha,\beta \in A}$.
If $V(P)$ is (numerically) invertible then one can interpolate $f$ by solving the linear system of equations
\begin{equation*}
 V(P)C =F \,,  \quad C= (c_{\alpha_{\min}},\dots,c_{\alpha_{\max}}) \,, \,\, \quad F= (f(p_{\alpha_{\min}}),\dots,f(p_{\alpha_{\max}}))
\end{equation*}
using $\Oc( |A|^3)$ operations\footnote{While algorithms of lower complexity exist, such as the Strassen algorithms or the Coppersmith-Winograd algorithm, their break-even points are reached only for problems so large that memory becomes the limiting factor.}. Indeed,
\begin{equation}
 Q_{f,A}(x)=\sum_{\alpha \in A}c_\alpha q_\alpha \,\, \in \Pi_A
\end{equation}
yields the unique interpolant of $f$ on $P_A$, i.e.,  $Q_{f,A}(p)=f(p)$ for all $p \in P_A$.
We call a set of nodes $P_A$ \emph{unisolvent}  with respect to $\Pi_A$ if and only if $V(P)$ is invertible, i.e., if and only if $\ker V(P) =0$.
The condition $\ker V(P) =0$ is equivalent to requiring that there exists no hypersurface $H = Q^{-1}(0)$
generated by a polynomial $0\not =Q \in \Pi_A$ with $P \subseteq H$. Indeed, the coefficients $C$ of such a polynomial would be a non-trivial solution of $V(P)C=0$.

However, even if $P$ is unisolvent, as is well known and shown in our previous work \cite{PIP1}, the inversion of the matrix $V$ becomes numerically ill-conditioned
when represented in the canonical basis $q_{\alpha}(x) =x^\alpha$, $\alpha \in A$.
Therefore, alternative interpolation schemes with better numerical condition and lower computational complexity are desirable.
While previous approaches to addressing this problem relied on tensorial interpolation schemes \cite{dyn, Guenther, kuntz, sauertens}, we here propose a different approach.

\subsection{Construction of unisolvent nodes}
In the following,  we develop our concept of unisolvent nodes that generalizes over previous works~\cite{Guenther,kuntz,PIP1,PIP2,IEEE}.
We start by stating the definitions on which our concept rests:
\begin{definition}[Transformations] An \emph{affine transformation}  $\tau : \R^m \lo \R^m$, $m \in \N$,
is a map $\tau(x) = Bx +b$, where $B \in \R^{m\times m}$ is an invertible matrix and $b \in \R^m$. An \emph{affine translation} is an affine transformation with $B=I$ the identity matrix. A
\emph{linear transformation} is an affine transformation with $b=0$.
\end{definition}

 \begin{lemma}\label{lemma:TRF} Any affine transformation $\tau : \R^m \lo \R^m$, $m \in \N$, induces a ring isomorphism $\tau^*: \R[x_1,\dots,x_m]\lo \R[x_1,\dots,x_m]$, i.e, the induced transformation
$$ \tau^* : \Pi_m \lo \Pi_m \quad \text{given by}\quad  \tau^*(Q)(x) = Q(\tau(x)) \quad  \forall \, x \in \R^m $$
is a linear transformation, such that
\begin{enumerate}
\item[i)]    $\tau^*(\lambda Q_1 + \mu Q_2) = \lambda\tau^*(Q_1) + \mu \tau^*(Q_2)$ for all $Q_1,Q_2 \in \Pi_m$, and $\lambda,\mu \in \R$.
 \item[ii)]  $\tau^*(Q_1Q_2) = \tau^*(Q_1)\tau^*(Q_2)$ for all $Q_1,Q_2 \in \Pi_m$, and $\tau^*(1)=1$.
 \item[iii)] $\tau^*(Q_1/Q_2) = \tau^*(Q_1)/\tau^*(Q_2)$ for all $Q_1,Q_2 \in \Pi_m$, and $\tau^*(1)=1$.
\end{enumerate}
\end{lemma}

\begin{proof} While $i),ii)$ are straightforward to prove $iii)$ follows by $ii)$ using the identity
 $\tau^*(Q_1)=\tau^*(1\cdot Q_1) = \tau^*((Q_2/Q_2)Q_1) = \tau^*(Q_2)\tau^*(Q_1/Q_2)$.
\end{proof}

\begin{definition}If $\Pi \subseteq \Pi_m$, $m \in \N$, is a polynomial subspace, then we call $\tau : \R^m \lo \R^m$ a \emph{canonical transformation} with respect to $\Pi$ if
and only if $\tau$ is an affine transformation such that the induced transformation
$\tau^* : \Pi \lo \Pi_m$ maps onto $\Pi$, i.e., $\tau^*(\Pi)\subseteq \Pi$.
\end{definition}

\begin{remark}
 Note that any affine translation and any affine transformation with $B$ being a diagonal matrix
 is a canonical transformation with respect to $\Pi_{m,n,p}$, $m,n\in N$, $1 \leq p \in \R\cup\{\infty\}$.
\end{remark}

Using these definitions, we formalize the concept of unisolvent nodes as:
\begin{definition}[Unisolvent nodes] Let $m \in \N$ and $\Pi \subseteq \Pi_m$ be a polynomial subspace.
We call a finite non-empty set $\emptyset \not = P \subseteq \R^m$ \emph{unisolvent} with respect to $\Pi$ if and only if there exists no non-zero polynomial
$$Q \in \Pi\setminus\{0\} \quad  \text{with} \quad  Q(p)=0 \,,\, \forall \,\,p \in P\,.$$
Let further $H\subseteq \R^m$ be a hyperplane defined by a linear polynomial $Q_H\in \Pi_{m,1,1}\setminus \{0\}$, i.e., $H = Q_H^{-1}(0)$ such that any affine transformation
$\tau_H : \R^m \lo \R^m$ with $\tau_H(H) = \R^{m-1} \times \{0\}$ is canonical with respect to $\Pi$.
We consider
\begin{align}
 \Pi_{|H} &= \li \{ Q \in \Pi \mi \tau_H^*(Q) \in \Pi \cap (\Pi_{m-1}\times\{0\})\re\} \label{PIH} \\
 \Pi_{|H}^\perp &= \li \{ Q \in \Pi_m \mi Q_H Q \in \Pi\re\}\,. \nonumber
\end{align}
We call $P$ \emph{unisolvent with respect} to $(\Pi, H)$ if and only if
\begin{enumerate}
 \item[i)] There is no polynomial $Q \in \Pi_{|H}$ with $\tau^*_H(Q) \not = 0$ and $Q(P\cap H) =0$.
 \item[ii)] There is no polynomial $Q \in \Pi_{| H}^\perp\setminus \{0\}$ with $Q(P\setminus H) =0$.
\end{enumerate}
\end{definition}

\begin{figure}[t!]
\includegraphics[scale=0.15]{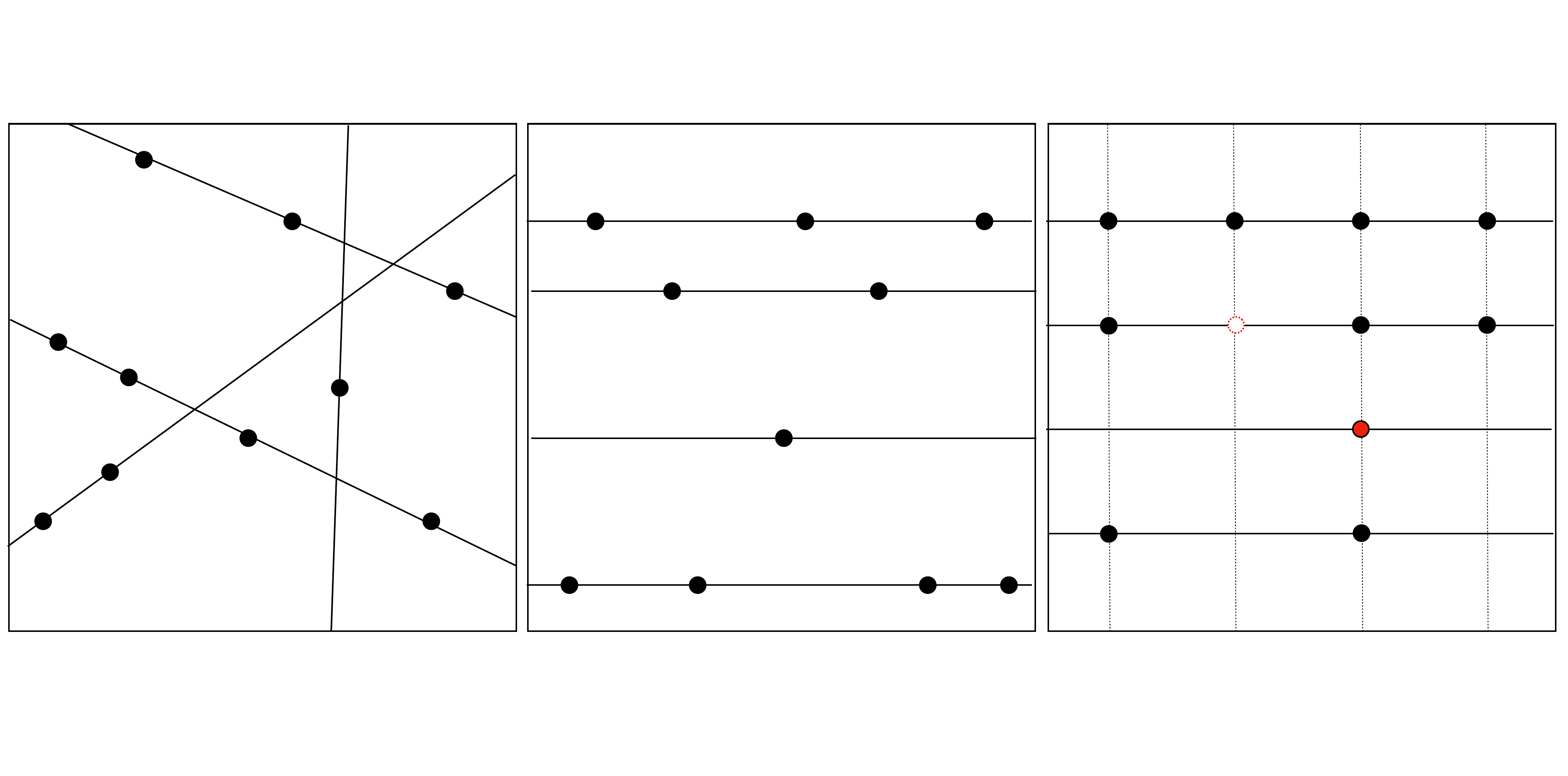}
\vspace{-0.5cm}
\caption{Examples of unisolvent nodes $P_A$ for $A= A_{2,3,1}$ in general (left) and on (irregular) grids (middle, right). Non-tensorial nodes are indicated in red with missing
symmetric counterparts shown as open symbols (right). \label{Fig:UN}}
\end{figure}

\begin{theorem} Let $m \in \N$, $ \Pi \subseteq \Pi_m$ a  polynomial subspace, $P \subseteq \R^m$ a finite set, and $H = Q_H^{-1}(0)$ be a hyperplane of co-dimension 1 defined by a polynomial $Q_H \in \Pi_{m,1,1}\setminus\{0\}$
such that:
\begin{enumerate}
 \item[i)]  The affine transformation $\tau_H : \R^m \lo \R^m$ with $\tau_H(H) = \R^{m-1} \times \{0\}$  induces a canonical transformation $\tau^*_H : \Pi \lo \Pi$.
 \item[ii)] $P$ is unisolvent with respect to $(\Pi,H)$.
\end{enumerate}
Then $P$ is unisolvent with respect to $\Pi$.
\label{theorem:UN}
\end{theorem}

\begin{proof} Let $Q \in \Pi$ with $Q(P)=0$.
We consider the affine transformation $\tau_H : \R^m \lo \R^m$ with $\tau_H(H) = \R^{m-1} \times \{0\}$ and the projection $\pi_{m-1}: \Pi_m \lo \Pi_{m-1} \times \{0\}$.
We consider
\begin{equation*}
 Q_1 = \tau^{*-1}_H\pi_{m-1}\tau^*_H(Q) \in \Pi_H\,, \quad \text{and}\quad  Q_2= (Q -Q_1)/Q_H\,.
\end{equation*}

{\bf Step 1:} We show that $Q_2 \in \Pi_{H}^\perp$.  Certainly, $Q_2$ is well defined on $\R^m\setminus H$.
Furthermore, we note that $\tau_H^*(Q_H) = \lambda x_m$, $\lambda \in \R\setminus \{0\}$. W.l.o.g. we assume $\lambda=1$ and use Lemma \ref{lemma:TRF}~$iii)$ to reformulate Eq.~\eqref{Q2} as
\begin{align*}
 Q_2 &=  \tau^{*-1}_H\big(\tau^{*}_H(Q) - \pi_{m-1}\tau^*_H(Q)\big)\big /(\tau^{*-1}_H (\tau^{*}_H(Q_H))   \\
 &= \tau^{*-1}_H\big((\tau^{*}_H(Q) - \pi_{m-1}\tau^*_H(Q))/x_m\big) .
\end{align*}
Since $Q_0:=\tau_H^*(Q)- \pi_{m-1}\tau^*_H(Q)$ is a polynomial consisting of monomials all sharing the variable $x_m$, the quotient $(\tau^{*}_H(Q) - \pi_{m-1}\tau^*_H(Q))/x_m \in \Pi$ and therefore $Q_2 \in \Pi$.
Further, by Lemma \ref{lemma:TRF}~$ii)$ we obtain
$$Q_HQ_2 = \tau_H^{*-1}(x_m)\tau_H^{*-1}(Q_0 /x_m)=\tau_H^{*-1}(Q_0) \in \Pi.$$
Hence, $Q_2 \in \Pi_{| H}^\perp$ as claimed.

{\bf Step 2:} We show that $Q=0$. Indeed, $Q(p) = Q_1(p) =0$ for all $p \in P\cap H$ implies that $Q_1=0$ in regard of assumption~$i)$.Consequently, $Q_HQ_2(p) =0$ for all $p\in P\setminus H$.
Since $Q_H(p)\not =0$ for all $p \in P\setminus H$ we get $\tau_{H}^{*-1}(Q_0)(p) =0$, $\forall p \in P\setminus H$, which, due to assumption~$ii)$ yields that $\tau_{H}^{*-1}(Q_0)=Q_2=0$. Thus, $Q=0$
is the zero polynomial and therefore $P$ is unisolvent with respect to $\Pi$.

\end{proof}
In Fig.~\ref{Fig:UN} we show examples of unisolvent nodes in 2D, generated by recursively applying Theorem~\ref{theorem:UN}. This illustrates how the notion of unisolvence presented here extends beyond notions resting on specified (sparse) grids.

%
\begin{figure}[t!]
\includegraphics[scale=0.15]{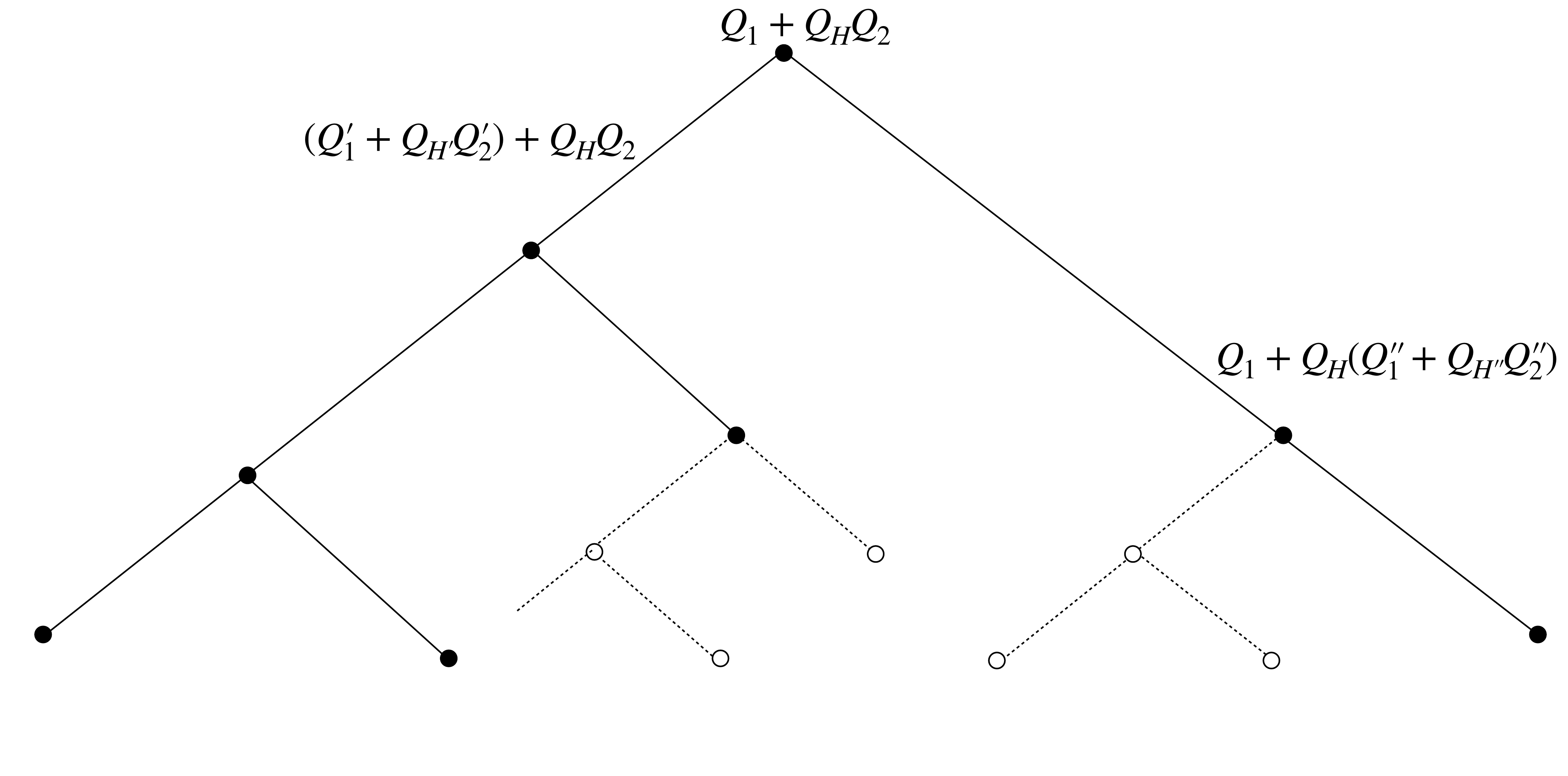}
\vspace{-0.5cm}
\caption{Recursive interpolation using the generalized divided difference scheme from Corollary~\ref{cor:GDDS}. \label{Fig:TREE}}
\end{figure}
\begin{theorem}\label{GS} Let the assumptions of Theorem \ref{theorem:UN} be fulfilled and $f : \R^m \lo \R$ be a function.
Assume that there are polynomials $Q_1 \in \Pi_{|H}, Q_2 \in \Pi_{| H}^\perp$ with $\Pi_{|H}, \Pi_{| H}^\perp$ from Eq.~\eqref{PIH}, such that:
\begin{enumerate}
 \item[i)]  $Q_1(p) = f(p)\,,\, \forall\, p \in P\cap H$
 \item[ii)] $Q_2(p) = (f(p) - Q_1(p))/Q_H(p)\,,\, \forall\, p \in P\setminus H$.
\end{enumerate}
Then $Q = Q_1 +Q_HQ_2 \in \Pi$ is the unique polynomial in $\Pi$ that interpolates $f$ on $P$, i.e., $Q(p)=f(p) \,,\, \forall\, p \in P$.
\label{theorem:PIP}
\end{theorem}
\begin{proof} Indeed, $Q_H\not = 0$ on $\R^m \setminus H$ implies that $Q(p)=f(p) \,,\, \forall\, p \in P$. Thus, $Q$ interpolates $f$ on $P$.
To show the uniqueness of $Q$ let $Q' \in \Pi$ interpolate $f$ on $P$.
Then $Q-Q' \in \Pi$ and $(Q-Q')(p) = 0 \,, \, \forall p \in P$.  Due to Theorem \ref{theorem:UN} we have that $P$ is unisolvent with respect to
$\Pi$. Thus, $Q'-Q$ has to be the zero polynomial, proving that $Q$ is uniquely determined in $\Pi$.
\end{proof}
By recursion, Theorem~\ref{GS} yields a general divided difference scheme for polynomial spaces $\Pi_A$ that are not induced by downward closed sets $A \subseteq \N^m$. The recursion is illustrated in Fig.~\ref{Fig:TREE}
and rephrases our earlier results \cite{PIP1}:

\begin{corollary}[Generalized Divided Differences] Let $\Pi \subseteq \Pi_m$, $m\in \N$, be a finite-dimensional polynomial space with $\dim \Pi =N\in \N$. Assume that there are unisolvent nodes $P$,
\begin{equation*}
 P = (P_0 \cup P_{1}) = (P_{0,0} \cup P_{0,1}) \cup (P_{1,0} \cup P_{1,1}) = \cdots = \bigcup_{\alpha \in A} P_\alpha\,, \,\, A\subseteq \N^m,
\end{equation*}
generated by recursively applying Theorem \ref{theorem:UN} with respect to hyperplanes
\begin{equation*}
H_{0,1} \subseteq \R^m, \quad H_{0,0,0,1}, H_{1,0,1,1} \subseteq \R^{m-1} \,, \dots,  H_{\alpha,\beta} \subseteq \R^2\,, \alpha, \beta \in A,
\end{equation*}
such that $P_1 \cap H_{0,1} = P_{0,1}\cap H_{0,0,0,1}= P_{1,1}\cap H_{1,0,1,1}= \dots =P_\beta \cap H_{\alpha,\beta} = \emptyset$ as illustrated in Fig.~\ref{Fig:UN} (left).
Then the unique interpolant $Q_f \in \Pi$ of any function $f : \R^m \lo \R$ can be computed in $\Oc(N^2)$.
\label{cor:GDDS}
\end{corollary}
\begin{proof} The proof follows by using Theorem \ref{GS} as an induction argument on $N$, observing that polynomials $Q \in \Pi$ can be evaluated in $\Oc(N^2)$ and classic 1D Newton interpolation requires $\Oc(n^2)$ runtime \cite{LIP}.
\end{proof}
While Corollary~\ref{cor:GDDS} proves the existence of a quadratic-runtime interpolation algorithm
even in the general case of irregular unisolvent nodes (Fig.~\ref{Fig:UN}, left), the next statement allows us
to derive a more suitable approach for implementing such an algorithm in practice:

\begin{figure}[t!]
\includegraphics[scale=0.15]{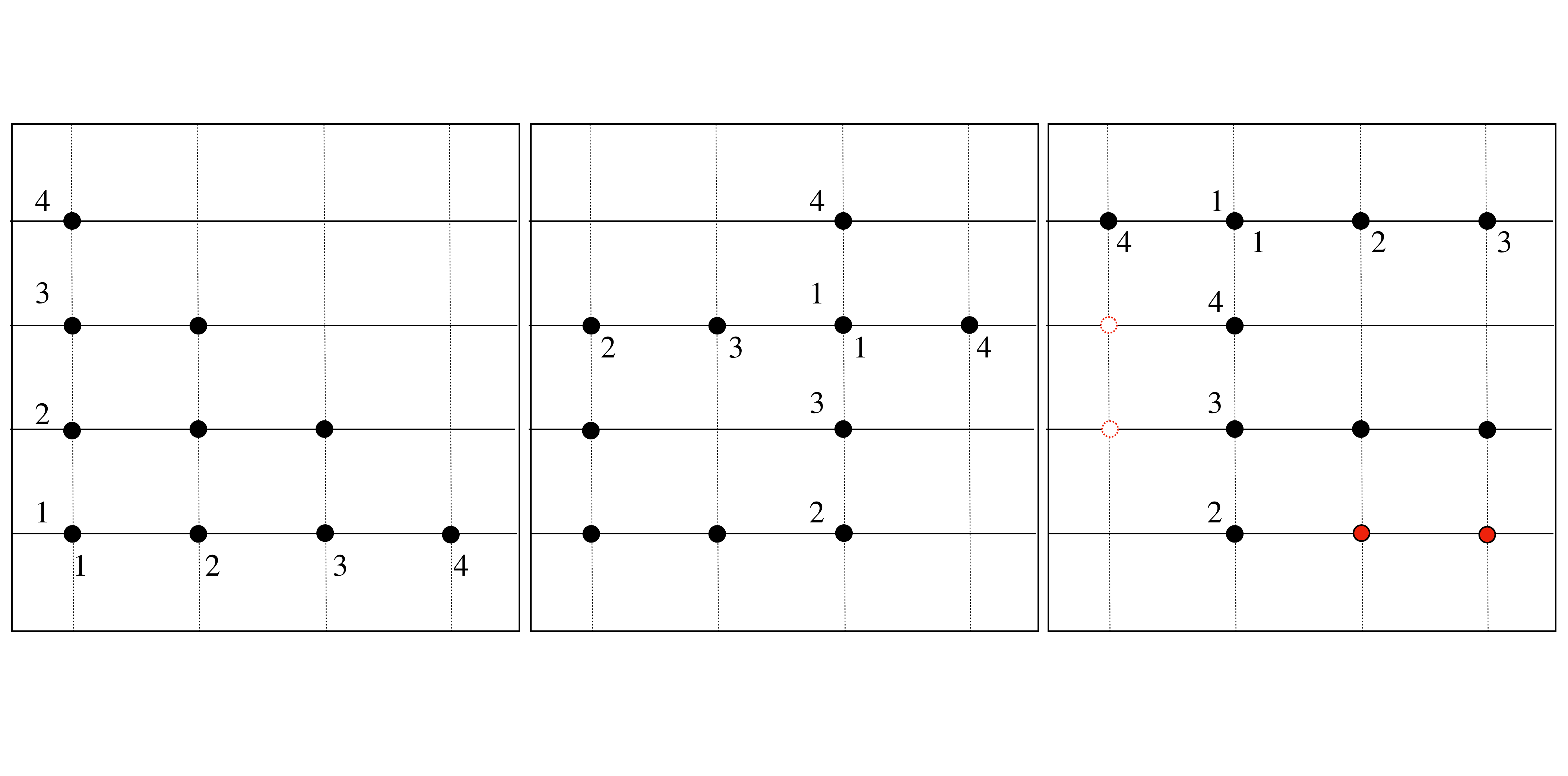}
\vspace{-0.5cm}
\caption{Examples of unisolvent nodes for $A= A_{2,3,1}$ (left, middle) and $A_{2,3,2}$ (right). Note that $(2,2) \in A_{2,3,2}\setminus A_{2,3,1}$ generates an extra node. Orderings in $x,y$--directions are indicated as well as
non-tensorial nodes in red with missing
symmetric counterparts shown as open symbols. \label{Fig:UN2}}
\end{figure}

\begin{corollary}\label{cor:grid} Let $m \in \N$, $A\subseteq \N^m$ be a complete set of multi-indices, and $\Pi_A \subseteq \Pi_m$ by the polynomial sub-space induced by $A$.
We consider the generating nodes given by the grid
\begin{equation}\label{GP}
 \mathrm{GP}= \oplus_{i=1}^m P_i\,, \quad P_i =\{p_{0,i},\dots,p_{n_i,i}\} \subseteq \R \,, \,\,\,n_i=\max_{\alpha \in A}(\alpha_i)\,,
\end{equation}
where the $P_i$ are arbitrary finite sets. Then, the node set
$$ P_A = \li\{ (p_{\alpha_1,1}\,, \dots \,, p_{\alpha_m,m} ) \mi \alpha \in A\re\} $$
is unisolvent with respect to $\Pi_A$.
\end{corollary}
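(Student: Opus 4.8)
The plan is to prove the statement by induction on the dimension $m$, using Theorem \ref{theorem:UN} as the inductive step with the hyperplane $H$ chosen to be the last coordinate hyperplane determined by one of the generating points in $P_m$. For $m=1$, the set $A=\{0,\dots,n_1\}$ (completeness forces this) and $P_A=P_1$ consists of $n_1+1$ distinct points, so unisolvence is the classical one-dimensional fact that a nonzero univariate polynomial of degree $\le n_1$ has at most $n_1$ roots. For the inductive step, suppose the claim holds in dimension $m-1$ and let $A\subseteq\N^m$ be complete with $n_i=\max_{\alpha\in A}\alpha_i$.

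The key step is to pick $H = Q_H^{-1}(0)$ with $Q_H(x) = x_m - p_{n_m,m}$, a linear polynomial in $\Pi_{m,1,1}$. The associated affine transformation $\tau_H$ is an affine translation, hence canonical with respect to any $\Pi_A$ by the Remark following the definition of canonical transformations (translations preserve completeness of the index support). I then need to identify the two auxiliary spaces: $\Pi_{|H}$ corresponds, after the translation $\tau_H$, to the polynomials supported on the "slice" $A' = \{(\alpha_1,\dots,\alpha_{m-1}) : (\alpha_1,\dots,\alpha_{m-1},0)\in A\}\subseteq\N^{m-1}$, and $\Pi_{|\bar H} = \{Q : Q_H Q\in\Pi_A\}$ corresponds to the index set $A'' = \{(\alpha_1,\dots,\alpha_m) : (\alpha_1,\dots,\alpha_{m-1},\alpha_m+1)\in A\}$ (shifting the last coordinate down by one). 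Completeness of $A$ guarantees that $A'$ and $A''$ are themselves complete, that $A'$ is nonempty, and — crucially — that every $\alpha\in A$ with $\alpha_m < n_m$ gets "absorbed" into the $Q_H Q_2$ term while the rest lives on the slice; this is exactly the combinatorial content making the splitting $\Pi_A = \Pi_{|H} \oplus Q_H\cdot\Pi_{|\bar H}$ work at the level of monomial supports.

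Now I verify conditions (i) and (ii) of unisolvence with respect to $(\Pi_A, H)$. For (i): $P_A\cap H$ consists of the points with last coordinate $p_{n_m,m}$, i.e., the image under $\tau_H^{-1}$ of $\{(p_{\alpha_1,1},\dots,p_{\alpha_{m-1},m-1},0) : (\alpha_1,\dots,\alpha_{m-1},0)\in A\}$ — but completeness implies this is precisely the grid $\oplus_{i=1}^{m-1} P_i'$ with $P_i' = \{p_{0,i},\dots,p_{n_i',i}\}$, $n_i' = \max_{\alpha\in A'}\alpha_i$, restricted to the index set $A'$. By the induction hypothesis applied to $A'$ in dimension $m-1$, this slice is unisolvent with respect to $\Pi_{A'}$, which gives (i). For (ii): $P_A\setminus H$ consists of the points whose last coordinate lies in $\{p_{0,m},\dots,p_{n_m-1,m}\}$; these again form (a subgrid of) a product grid indexed by $A''$, and the induction hypothesis applied to $A''$ in dimension $m$ with one fewer point in the last factor — or a secondary induction on $|A|$ — yields that they are unisolvent with respect to $\Pi_{|\bar H} = \Pi_{A''}$, giving (ii). Then Theorem \ref{theorem:UN} concludes that $P_A$ is unisolvent with respect to $\Pi_A$.

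The main obstacle I anticipate is the bookkeeping that shows $\Pi_{|H}$ and $\Pi_{|\bar H}$ really are the monomial spaces $\Pi_{A'}$ and $\Pi_{A''}$ and that $P_A\cap H$ and $P_A\setminus H$ decompose into the correct product grids indexed by those sets — in other words, making the "completeness $\Rightarrow$ clean slicing" argument fully rigorous, including the subtlety that $P_A\setminus H$ is unisolvent for $\Pi_{A''}$ even though it is a proper subgrid (it still contains a full product grid of the right sizes because $n_i'' \le n_i$ and the relevant one-dimensional factors $P_i$ already contain enough points). A clean way to handle the recursion on $P_A\setminus H$ is to note it is itself of the grid form of the Corollary for the complete index set $A''$ in $\R^m$ with strictly smaller cardinality, so the whole proof can alternatively be run as a single induction on $|A|$ rather than nested inductions on $m$ and then $|A|$; I would present it that way to keep the argument short.
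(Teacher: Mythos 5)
Your architecture --- induction, Theorem~\ref{theorem:UN} as the engine, a coordinate hyperplane $H$ through a generating node of $P_m$, and the identification of $\Pi_{|H}$ with the $\alpha_m=0$ slice of $A$ and of $\Pi_{|\bar H}$ with the downshifted set $A''=\{\beta:\beta+e_m\in A\}$ --- is the paper's. But you place $H$ at the wrong end: $Q_H(x)=x_m-p_{n_m,m}$ puts $H$ through the nodes with $\alpha_m=n_m$, and this breaks condition (i) of unisolvence with respect to $(\Pi_A,H)$. The obstruction is combinatorial: by Eq.~\eqref{PIH}, $\Pi_{|H}$ consists of the polynomials in $\Pi_A$ constant in $x_m$, so it is indexed by the bottom slice $\{\alpha\in A:\alpha_m=0\}$ regardless of where $H$ sits; but $P_A\cap H$, with your choice, is indexed by the top slice $\{\alpha\in A:\alpha_m=n_m\}$, which for a downward-closed $A$ is generally strictly smaller. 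Then $|P_A\cap H|<\dim\Pi_{|H}$ and a nonzero element of $\Pi_{|H}$ vanishing on $P_A\cap H$ exists. Concretely, for $m=2$, $A=\{(0,0),(1,0),(0,1)\}$ (so $n_2=1$): $P_A\cap H=\{(p_{0,1},p_{1,2})\}$ is one node, $\Pi_{|H}=\mathrm{span}\{1,x_1\}$, and $Q=x_1-p_{0,1}$ vanishes on $P_A\cap H$ with $\tau_H^*(Q)\neq 0$. The identity you assert --- that $P_A\cap H$ equals $\tau_H^{-1}$ of the grid indexed by $A'$ --- would require $(\alpha_1,\dots,\alpha_{m-1},0)\in A \Rightarrow (\alpha_1,\dots,\alpha_{m-1},n_m)\in A$, which completeness does not give (it gives the converse).

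Replace $p_{n_m,m}$ with $p_{0,m}$, as the paper does, and everything closes. Then $P_A\cap H=\{p_\alpha:\alpha_m=0\}$ is bijectively indexed by the bottom slice $A'$, exactly matching $\Pi_{|H}$, so condition (i) is the induction hypothesis in dimension $m-1$. And $P_A\setminus H=\{p_\alpha:\alpha_m\ge 1\}$ is bijectively indexed by $A''$ via $\alpha\mapsto\alpha-e_m$ and, after relabeling $P_m$ by $p'_{k,m}=p_{k+1,m}$, is itself a grid of the corollary's form with $|A''|<|A|$, so condition (ii) is the induction hypothesis on $|A|$. With that one change, your single-induction-on-$|A|$ presentation is sound, and the remaining parts of your proposal --- canonicality of translations for complete $A$, the identification $\Pi_{|\bar H}=\Pi_{A''}$, and the $m=1$ base case --- are all correct.
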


\begin{proof} We argue by induction on $m$ and $|A|$. If $m=1$ then the claim follows from the fact that $\dim \Pi_A= |A|$ and no polynomial $Q\in \Pi_A$ can vanish on $|A|$ distinct nodes
$P_A=\{(p_{\alpha(1),1}) \mi \alpha \in A\}$. The claim becomes trivial for $|A|=1$. Now assume that $m >1$ and $|A|>1$. We consider $A_1=\li\{\alpha \in A \mi \alpha_m =0\re\}$, $A_2 = A \setminus A_1$.
By decreasing $m$ if necessary and w.l.o.g., we can assume that $A_2 \not = \emptyset$. Consider the hyperplane
$H =\{ (x_1,\dots,x_{m-1},p_{0,m})\mi (x_1,\dots,x_{m-1}) \in \R^{m-1}\}$ and
$Q_H \in \Pi_{m,1,1}$ with $Q_H(x)= x_m-p_{0,m}$. By induction we have that $P_A$ is unisolvent with respect to $(\Pi_A,H)$. Thus, we finish the proof by Theorem \ref{theorem:UN} and induction.
\end{proof}
\begin{definition}[Essential assumptions] We say that the \emph{essential assumptions} hold with respect to $A\subseteq \N^m$ and $P_A\subseteq \R^m$, where $m \in \N$ and $A$ is a complete (i.e., downward closed) set of multi-indices,
if and only if there exist generating nodes
\begin{equation}
 \mathrm{GP}= \oplus_{i=1}^m P_i\,, \quad P_i =\{p_{0,i},\dots,p_{n_i,i}\} \subseteq \R \,, \,\,\,n_i=\max_{\alpha \in A}(\alpha_i)\,,
\end{equation}
and the unisolvent nodes $P_A$ are given by
$$ P_A = \li\{ (p_{\alpha_1,1}\,, \dots \,, p_{\alpha_m,m} ) \mi \alpha \in A\re\} \,.$$
Unless further specified, the generating nodes $ \mathrm{GP}$ are arbitrary.
\label{EA}
\end{definition}

\begin{remark}
It is important to note that the nodes $P_A$ are not sampled from a grid, but generate a sub-grid. Consequently, even though the index sets $A$ are assumed to be downward closed,
the flexibility in ordering the $P_i$ results in unisolvent nodes $P_A$ that may induce \emph{non-tensorial or non-symmetric grids}. This can be seen in
Fig.~\ref{Fig:UN} (right), Fig.~\ref{Fig:UN2} (right), and Fig.~\ref{Nodes}, where there are nodes with $p=(p_x,p_y) \in P_A$, but $(p_y,p_x)\not \in P_A$. In Fig.~\ref{Fig:UN2} (left, middle) examples of a tensorial and symmetric grids are shown.
\end{remark}

The nodes shown in Fig.~\ref{Nodes} result in high approximation power of the interpolation, as proven in Section \ref{sec:APP}. They are generated by choosing $\mathrm{GP}= \oplus_{i=1}^m (-1)^i\Cheb_n^{0}$,
where the Chebyshev extremes $\Cheb_n^{0}$ defined in Eq.~\eqref{CHEB} are Leja ordered \cite{leja}.
Since these $P_A$ form a non-tensorial grid, previous interpolation approaches \cite{dyn,sauertens} cannot be used. We therefore establish efficient and numerically robust interpolation schemes for such non-tensorial unisolvent nodes in the next section.

\begin{figure}[t!]
\includegraphics[scale=1.05]{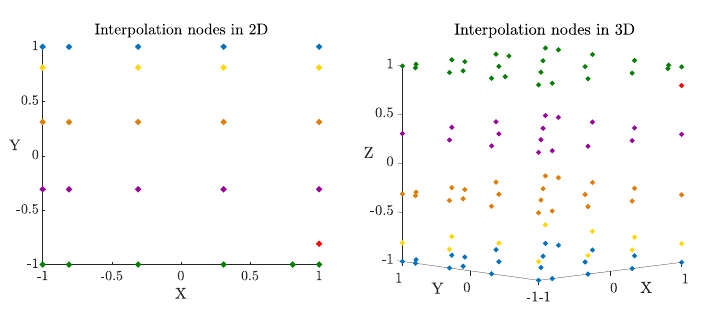}
\vspace{-0.5cm}
\caption{Unisolvent nodes $P_A$ in 2D (left) and 3D (right) with respect to $A_{m,n,p}$ for dimensions $m=2,3$, $n=5$, $p=2$, and Leja ordered \cite{leja} generating nodes $ \mathrm{GP} = \oplus_{i=1}^m(-1)^m\Cheb_n^{0}$.
Nodes belonging to the same line/plane are colored equally.} \label{Nodes}
\end{figure}

\section{Multivariate Newton interpolation}\label{sec:NEWT}

We use the above generalized concept of unisolvence on non-tensorial nodes to provide a natural extension of the classic Newton interpolation scheme to arbitrary dimensions. The extension presented here relies on recursively applying Theorem \ref{theorem:PIP} and Corollary \ref{cor:grid}. We start by defining:
\begin{definition}[Multivariate Newton polynomials] Let the essential assumptions (Definition \ref{EA}) be fulfilled with respect to $A\subseteq \N^m$ and $P_A\subseteq \R^m$. Then, we define the \emph{multivariate Newton polynomials} by
 \begin{equation}\label{Newt}
  N_\alpha(x) = \prod_{i=1}^m\prod_{j=0}^{\alpha_i-1}(x_i-p_{j,i}) \,, \quad \alpha \in A\,.
 \end{equation}
\end{definition}
Indeed, in dimension $m=1$ this reduces to the classic definition of Newton polynomials~\cite{gautschi,Stoer,Lloyd}.

\begin{definition}[Multivariate divided differences] \label{def:DDS}  Let the essential assumptions (Definition \ref{EA}) be fulfilled with respect to $A\subseteq \N^m$ and $P_A\subseteq \R^m$. Further let $f : \R^m \lo \R$ be a function.
Then, we recursively define the {\em multivariate divided differences}:
\begin{alignat*}{5}
p_{\alpha,0,j} & = p_\alpha  \,,  \quad  &  p_{\alpha,i,j} &= p_{\beta}\,,\quad  &  \beta_j =i \,, \beta_k = \alpha_k \,,  \, \forall k \not = j\,,     \\
F_{\alpha,0,m} & =f(p_{\alpha}) \,, \quad   & F_{\alpha,0,j} & =F_{\alpha,\alpha_{j+1},j+1}\,, \quad  & \text{for}\,\,\,   1 \leq j < m\,,
\end{alignat*}
and
$$
F_{\alpha,i,j} =\frac{F_{\alpha,i-1,j}(p_{\alpha}) - F_{\alpha,i-1,j}(p_{\alpha,i-1,j})}{(p_{\alpha}-p_{\alpha,i-1,j}) }  \,, \,\,\, \text{for}\,\,\, i \leq \alpha _j \,.
$$
We call  $F_{\alpha,0,0}= F_{\alpha,\alpha_1,1}$  the \emph{Newton coefficients} of $Q_{f,A} \in \Pi_A$.
\label{MVDD}
\end{definition}
In dimension $m=1$, this definition recovers the classic \emph{divided difference scheme} of \emph{1D Newton Interpolation}~\cite{gautschi,Stoer,Lloyd}.

Using these definitions we state the main result of this section, generalizing Newton interpolation to $m$D:
\begin{theorem} \label{theorem:DDS} Let the essential assumptions (Definition \ref{EA}) be fulfilled with respect to $A\subseteq \N^m$ and $P_A\subseteq \R^m$, and let $f : \R^m \lo \R$ be a function.
Then, the unique polynomial $Q_{f,A} \in \Pi_A$ interpolating $f$ on $P_A$, i.e., $Q_f(p) = f(p) \,, \, \forall \, p \in P_A$, can be determined in $\Oc(|A|^2)$ operations requiring $\Oc(|A|)$ storage and is given by
\begin{equation}\label{Newton}
  Q_{f,A}(x) = \sum_{\alpha \in A} c_\alpha N_{\alpha} (x)\,,
\end{equation}
where $c_\alpha =F_{\alpha,0,0}$ are the \emph{Newton coefficients} of $Q_{f,A} \in \Pi_A$.
\end{theorem}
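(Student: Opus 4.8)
The plan is to prove the statement by induction on the dimension $m$, using the recursive splitting provided by Theorems \ref{theorem:UN} and \ref{theorem:PIP}, and identifying at each level of the recursion that the interpolation subproblems are exactly classical $1$D Newton interpolations along the grid directions. First I would set up the base case $m=1$: here $A=A_{1,n,1}=\{0,\dots,n\}$, the Newton polynomials $N_j(x)=\prod_{k=0}^{j-1}(x-p_{k,1})$ are the classical ones, the divided differences $F_{\alpha,i,1}$ reduce to the classical divided difference table, and the formula $Q_{f,A}=\sum_j F_{j,0,0}N_j$ is the textbook Newton interpolation formula; uniqueness follows from $\dim\Pi_A=|A|$ and distinctness of the $p_{k,1}$, i.e.\ from Corollary \ref{cor:grid} with $m=1$. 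The complexity claim is the classical $\Oc(n^2)$ count for building the divided difference table and $\Oc(n)$ storage for the coefficient vector.

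For the inductive step I would mimic the proof of Corollary \ref{cor:grid}. Split $A=A_1\sqcup A_2$ with $A_1=\{\alpha\in A\mid\alpha_m=0\}$ and $A_2=A\setminus A_1$, take the hyperplane $H=\{x_m=p_{0,m}\}$ with $Q_H(x)=x_m-p_{0,m}$, which is canonical with respect to $\Pi_A$. One checks that $\Pi_{|H}$ is (isomorphic via $\tau_H^*$ to) $\Pi_{A_1}$ viewed in the first $m-1$ variables, with generating nodes $P_1,\dots,P_{m-1}$, and that $\Pi_{|\bar H}=\Pi_{A'}$ where $A'=\{\alpha-e_m\mid\alpha\in A_2\}$ is again complete, with the same generating nodes except that the $x_m$-direction now uses $P_m\setminus\{p_{0,m}\}=\{p_{1,m},\dots,p_{n_m,m}\}$. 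By the induction hypothesis (decreasing $m$ for the $H$-part, decreasing $|A|$ and shifting for the $\bar H$-part) each subproblem has a unique Newton-form solution: $Q_1=\sum_{\alpha\in A_1}c_\alpha N_\alpha$ interpolating $f$ on $P_A\cap H$, and $Q_2=\sum_{\alpha\in A'}c'_\alpha N_\alpha$ interpolating $g(p)=(f(p)-Q_1(p))/Q_H(p)$ on $P_A\setminus H$. Theorem \ref{theorem:PIP} then gives that $Q=Q_1+Q_HQ_2\in\Pi_A$ is the unique interpolant of $f$ on $P_A$. The remaining bookkeeping is to verify that $Q_HQ_2=\sum_{\alpha\in A_2}c_\alpha N_\alpha$ with the correct coefficients: this is exactly the identity $Q_H(x)\cdot N_{\alpha-e_m}(x)=N_\alpha(x)$ for $\alpha\in A_2$, which holds because multiplying by $(x_m-p_{0,m})$ prepends the missing factor in the $x_m$-product of \eqref{Newt}, and to check that the divided-difference recursion of Definition \ref{MVDD}, read along the index $j=m$ first and then recursing on $j$, computes precisely the coefficients $c_\alpha$, $c'_\alpha$ produced by these subproblems. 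The key structural observation making this work is that the generating-grid structure of the essential assumptions is preserved under both restrictions $\Pi_{|H}$ and $\Pi_{|\bar H}$.

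For the complexity bound I would argue that the recursion tree has total work proportional to $\sum$ over all nodes of the number of divided-difference updates touching that node, and that each entry $F_{\alpha,i,j}$ is computed from two previously computed entries in $\Oc(1)$ time; a careful count (each multi-index $\alpha$ contributes $\|\alpha\|_1\le mn$ but summed against completeness of $A$) gives $\Oc(|A|^2)$ total operations, and since only the final coefficient array and $\Oc(|A|)$ scratch along the current recursion branch need be stored, $\Oc(|A|)$ memory suffices. I expect the main obstacle to be the careful combinatorial verification that Definition \ref{MVDD}'s three-index recursion really unrolls into precisely the nested sequence of $1$D divided difference computations dictated by the recursive splitting — matching indices $(\alpha,i,j)$ to "which $1$D subproblem, which step" requires patience but no new idea — together with making the $\Oc(|A|^2)$ operation count rigorous rather than merely plausible, since naively bounding work per node by $\Oc(|A|)$ would only give $\Oc(|A|^2)$ anyway but one must ensure no node is processed more than $\Oc(|A|)$ times across the whole recursion.
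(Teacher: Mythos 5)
Your proposal is correct and takes essentially the same approach as the paper: splitting $A$ along the hyperplane $H=\{x_m=p_{0,m}\}$, solving the two sub-interpolation problems by induction, recombining via Theorem \ref{theorem:PIP}, and using the factorization $Q_H N_{\alpha-e_m}=N_\alpha$ to identify the coefficients with the divided differences. The paper phrases the induction as being on $|A|$ alone (with base case $|A|=1$) rather than on $m$ with a secondary descent on $|A|$, and is somewhat looser than you about the re-indexing $A_2\mapsto A_2-e_m$ needed for the second subproblem, but the structure and all key steps coincide.
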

Earlier versions of this statement were limited to the case where $P_A$ is given by a (sparse) tensorial grid \cite{dyn}.
In contrast, the present Theorem \ref{theorem:DDS} also holds for our generalized notion of non-tensorial unisolvent nodes.
In addition, Eq.~\ref{Newton} enables
a paractical implementation of a recursive algorithm. We prove Theorem~\ref{theorem:DDS}:
\begin{proof} We argue by induction on $|A|$. If $|A|=1$ then the claim follows immediately. For  $|A|>1$ we consider $A_1=\li\{\alpha \in A \mi \alpha_m =0\re\}$, $A_2 = A \setminus A_1$. By decreasing $m$ if necessary, and w.l.o.g., we can assume that $A_2 \not = \emptyset$. Let
$H =\{(x_1,\dots,x_{m-1},p_{0,m})\mi (x_1,\dots,x_{m-1}) \in \R^{m-1}\}$ and $\tau_H : \R^m\lo\R^m$ with $\tau_H(x) =x_m- p_{0,m}$ such that $\tau_H(H) = \R^m \times\{0\}$.
By assumption, we have that $\tau$ is a canonical transformation with respect to $\Pi_A$. Let $\pi_{m-1} : \R^m \lo \R^{m-1}$, $\pi_{m-1}(x_1,\dots,x_m)=(x_1,\dots,x_{m-1})$ be the natural projection.

{\bf Step 1:} We reduce the interpolation to $H$. We set  $P_1 = \pi_{m-1}\big(\tau_H(P_A\cap H)\big)$ and consider $f_0 : \R^m \lo \R$ with
\begin{equation}\label{ftau}
 f_0(x_1,\dots,x_{m-1}) = f\big(\tau_H^{-1}(x_1,\dots,x_{m-1},0)\big)=f\big(x_1,\dots,x_{m-1},p_{0,m})\,.
\end{equation}
Let $M_{\alpha}(x)$, $\alpha \in A_1$, be the Newton polynomials with respect to $A_1$, $P_1$. Then induction yields that the coefficients $d_\alpha \in \R$ of the unique polynomial
$$ Q_{f_0,A_1}(x_1,\dots,x_{m-1}) = \sum_{\alpha \in A_1 } d_\alpha M_{\alpha}(x_1,\dots,x_{m-1})$$
interpolating $f_0$ on $P_1$ can be determined in less than $D_0|A_1|^2$ operations, $D_0 \in \R^+$, while requiring a linear amount of storage.
Consider the  natural embedding $i_{m-1}^*:\Pi_{m-1} \hookrightarrow \Pi_m$. Then
\begin{align*}
 \bar M_\alpha(x_1,\dots,x_m)= i_{m-1}^*(M_{\alpha})(x_1,\dots,x_{m-1}) &= M_{\alpha}(x_1,\dots,x_{m-1}) \,\,\, \text{and}\,\,\,  \\
Q_1(x_1,\dots,x_m) = i_{m-1}^*(Q_{f_0,A_1})(x_1,\dots,x_{m-1}) &= Q_{f_0,A_1}(x_1,\dots,x_{m-1})
\end{align*}
yields
$\bar M_\alpha(x) = N_{\alpha}(x)$, for all $\alpha \in A_1$, $x \in \R^m$. Further,  $Q_1 \in \Pi_A$ is given by
$$
Q_1(x_1,\dots,x_m)= \sum_{\alpha\in A_1}d_{\alpha}N_{\alpha}(x_1,\dots,x_m)
$$
and satisfies $Q_1(p) =f(p)$ for all $p \in P_A \cap H$.

{\bf Step 2:} We interpolate on $\R^m \setminus H$.  Observe that $Q_1$ is constant in direction $x_m$, i.e., $Q_1(x_1,\dots,x_{m-1},x_m) =Q_{f_0,A_1}(x_1,\dots,x_{m-1})$. Thus, by Eq.~\eqref{ftau},
$$Q_1(q_1,\dots,q_{m-1},q_m) =f(q_1,\dots,q_{m-1},p_{0,m})\,\, \text{for all}\,\, (q_1,\dots,q_{m-1},q_m) \in P_A\,.  $$
In light of this fact, and for $f_1(x) = (f(x) - Q_1(x))/Q_H(x)$, it requires $D_1|A_2|$, $D_1 \in \R^+$, operations to compute
\begin{equation}\label{F}
 F_{\alpha,1,m}=\frac{f(p_{\alpha,1,m}) - f(p_{\alpha,0,m})}{p_{\alpha}-p_{\alpha,0,m}}=\frac{f(p_\alpha) - Q_1(p_\alpha)}{Q_H(p_\alpha)}=f_1(p_\alpha)
\end{equation}
 for all $p_{\alpha} \in P_2=P_A\setminus H$, $\alpha \in A_2$.
Denote by $K_{\alpha}(x)$ the Newton polynomial with respect to $\Pi_{A_2},P_2$ then
induction yields that the coefficients
$e_\alpha \in \R$, $\alpha \in A_2$, of the unique polynomial
$$Q_{f_1,A_2}(x_1,\dots,x_m)= \sum_{\alpha \in A_2}e_\alpha K_{\alpha}(x_1,\dots,x_m)\,.$$
interpolating $f_1$ on $P_2$ can be determined in less than $D_2|A_2|^2$, $D_2 \in \R^+$, operations while requiring linear storage. Due to Eq.~\eqref{Newt} we observe that  $Q_H(x)K_\alpha(x) = N_{\alpha}(x)$ for all $\alpha \in A_2$. By Corollary \ref{cor:grid} we have that
$P_A$ is unisolvent and therefore Theorem \ref{theorem:PIP} implies that
the unique polynomial $Q\in \Pi_A$ interpolating $f$ on $P_A$ is given by:
\begin{equation}\label{split}
  Q_{f,A}(x)=Q_1(x) + Q_H(x)Q_2(x)=\sum_{\alpha \in A_1}d_\alpha N_{\alpha}(x) + \sum_{\alpha \in A_2}e_\alpha N_{\alpha}(x)\,.
\end{equation}
Following Definition \ref{def:DDS} and using Eq.~\eqref{F}, one readily observes that $d_\alpha = c_\alpha$, $\,\forall\,\alpha \in A_1$, and that $e_\alpha=c_\alpha$, $\,\forall\,\alpha \in A_2$.
Thus, we have proven Eq.~\eqref{Newton}. In total, the computation can be done in less than $D_0|A_1|^2 + D_1|A_2|+D_2|A_2|^2\leq \max\{D_0,D_1/2,D_2\}(|A_1|+|A_2|)^2 \in \Oc(|A|^2)$ operations and $\Oc(|A_1| + |A_2|))=\Oc(|A|)$ amount of storage.

\end{proof}
As an immediate consequence, we deduce that even for non-tensorial grids $P_A$ the Newton polynomials are a basis of $\Pi_A$.
\begin{corollary}[Newton basis] \label{cor:Nbasis} Let the essential assumptions (Definition \ref{EA}) be fulfilled with respect to $A\subseteq \N^m$ and $P_A\subseteq \R^m$. Then the Newton polynomials
$$ \{N_{\alpha}\}_{\alpha \in A} \subseteq \Pi_A$$
are a basis of $\Pi_A$.
\end{corollary}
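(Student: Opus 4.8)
The plan is to combine the dimension counting for $\Pi_A$ with the linear independence of the Newton polynomials, which follows almost immediately from Theorem \ref{theorem:DDS}. First I would recall that, by Corollary \ref{cor:grid}, the node set $P_A$ arising from the grid generating nodes $\mathrm{GP}$ is unisolvent with respect to $\Pi_A$; in particular the multivariate Vandermonde matrix $V(P_A)$ is invertible and $\dim \Pi_A = |A| = |P_A|$. So it suffices to show that $\{N_\alpha\}_{\alpha \in A}$, which is a family of $|A|$ elements of $\Pi_A$, is linearly independent.

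For linear independence, suppose $\sum_{\alpha \in A} c_\alpha N_\alpha = 0$ as a polynomial. This is precisely the interpolant $Q_{f,A}$ for the zero function $f \equiv 0$, whose Newton coefficients are, by Definition \ref{MVDD}, all zero (every divided difference of the zero function vanishes). By the uniqueness part of Theorem \ref{theorem:DDS} (equivalently, Theorem \ref{theorem:PIP} applied with $f \equiv 0$), the interpolant of the zero function is the zero polynomial and its representation in the Newton polynomials is unique, forcing $c_\alpha = 0$ for all $\alpha \in A$. Alternatively, and more self-containedly, one can argue directly: if not all $c_\alpha$ vanish, evaluate $\sum c_\alpha N_\alpha$ successively on the nodes $p_\beta$ in lexicographic order; since $N_\alpha(p_\beta) = 0$ whenever $\beta <_L \alpha$ fails to dominate $\alpha$ componentwise — more precisely $N_\alpha(p_\beta)=0$ unless $\alpha_i \le \beta_i$ for all $i$ — the Vandermonde-type matrix $(N_\alpha(p_\beta))_{\alpha,\beta}$ is triangular with nonzero diagonal entries $N_\alpha(p_\alpha) = \prod_{i=1}^m \prod_{j=0}^{\alpha_i-1}(p_{\alpha_i,i} - p_{j,i}) \ne 0$ (the generating nodes $P_i$ being distinct), hence invertible, and thus $c_\alpha = 0$ for all $\alpha$.

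Finally, a spanning family of $|A|$ linearly independent vectors in the $|A|$-dimensional space $\Pi_A$ is a basis; this concludes the proof. The main (and only mild) obstacle is making precise the claim that the matrix $(N_\alpha(p_\beta))_{\alpha,\beta \in A}$ is triangular after ordering $A$ appropriately: one needs to check that the lexicographic order $\leq_L$ refines the componentwise partial order in the sense that $N_\alpha(p_\beta) \ne 0$ implies $\alpha \leq_L \beta$, which uses completeness of $A$ together with the product structure of the $N_\alpha$ and of the nodes $P_A$. Once that combinatorial bookkeeping is in place, everything else is routine linear algebra, and in fact the cleanest route is simply to invoke the uniqueness statement already proved in Theorem \ref{theorem:DDS}.
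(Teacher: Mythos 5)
Your proof is correct, and it takes a genuinely complementary route to the paper's. The paper's proof establishes that the $N_\alpha$ \emph{span} $\Pi_A$: by Theorem \ref{theorem:DDS}, every $Q \in \Pi_A$ is its own unique interpolant on the unisolvent set $P_A$, and that interpolant is delivered in the Newton form $\sum_\alpha c_\alpha N_\alpha$; combined with $\dim \Pi_A = |A|$ (the monomials $x^\alpha$, $\alpha \in A$, are a basis), a spanning family of $|A|$ vectors is a basis. You instead establish \emph{linear independence} directly and then invoke the same dimension count. Of your two arguments, the first (via Theorem \ref{theorem:DDS} applied to $f\equiv 0$) is slightly circular as stated: you assert that the representation of the zero polynomial in the Newton system is unique, but that uniqueness is precisely the content of the corollary, so it cannot be assumed. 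Your second, self-contained argument is the clean one: since $N_\alpha(p_\beta) = \prod_{i=1}^m\prod_{j=0}^{\alpha_i-1}(p_{\beta_i,i}-p_{j,i})$ vanishes exactly when $\beta_i < \alpha_i$ for some $i$, and componentwise domination $\alpha_i \le \beta_i$ (all $i$) refines the lexicographic order $\leq_L$ of Definition \ref{EA}, the matrix $\big(N_\alpha(p_\beta)\big)_{\alpha,\beta\in A}$ is triangular with nonzero diagonal $N_\alpha(p_\alpha) = \prod_i\prod_{j<\alpha_i}(p_{\alpha_i,i}-p_{j,i})\ne 0$, hence invertible, forcing $c_\alpha = 0$. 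This is in fact the same observation the paper exploits later in the proof of Theorem \ref{theorem:Lag}(i) to show that $\mathrm{NL}_A$ is lower triangular, so your route connects naturally to the LU-decomposition machinery; it also has the advantage of not depending on the full divided-difference construction of Theorem \ref{theorem:DDS}, only on the product form of the Newton polynomials, Eq.~\eqref{Newt}, and the grid structure of $P_A$.
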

\begin{proof} Due Theorem \ref{theorem:DDS} every polynomial $Q \in \Pi_A$ can be uniquely expanded as $Q=\sum_{\alpha \in A}c_{\alpha}N_{\alpha}$, proofing the statement.
\end{proof}

\begin{corollary}[Evaluation in Newton form] \label{cor:eval} Let the essential assumptions (Theorem \ref{EA}) be fulfilled with respect to $A\subseteq \N^m$ and $P_A\subseteq \R^m$.
Further let $Q(x) = \sum_{\alpha}c_\alpha N_{\alpha}$,
$c_\alpha \in \R$, be a polynomial in Newton form. Then, it requires $\Oc(|A|)$ operations to evaluate $Q$ at some $x_0 \in \R^m$.
\end{corollary}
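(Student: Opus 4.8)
The plan is to evaluate $Q$ in Newton form by an $m$-dimensional analogue of the Horner--Aitken nested scheme, exploiting the product structure of the multivariate Newton polynomials $N_\alpha(x) = \prod_{i=1}^m\prod_{j=0}^{\alpha_i-1}(x_i-p_{j,i})$ together with completeness of $A$. First I would fix the point $x_0=(x_{0,1},\dots,x_{0,m})\in\R^m$ and precompute, for each coordinate $i=1,\dots,m$, the partial products $\pi_{k,i} = \prod_{j=0}^{k-1}(x_{0,i}-p_{j,i})$ for $0\le k\le n_i = \max_{\alpha\in A}\alpha_i$; this costs $\sum_{i=1}^m n_i \le |A|$ multiplications and $\Oc(|A|)$ storage, since $n_i+1\le |A|$ for each $i$ and more crudely $\sum_i n_i\in\Oc(|A|)$ because $A$ is complete. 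With these in hand, $N_\alpha(x_0) = \prod_{i=1}^m \pi_{\alpha_i,i}$, but forming each such product independently would cost $\Oc(m|A|)$, so instead I would organize the summation $Q(x_0)=\sum_{\alpha\in A}c_\alpha\prod_{i=1}^m\pi_{\alpha_i,i}$ recursively on the coordinate index, as in the proof of Theorem~\ref{theorem:DDS}.

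Concretely, I would run the same splitting $A=A_1\sqcup A_2$ with $A_1=\{\alpha\in A\mid \alpha_m=0\}$, $A_2=A\setminus A_1$, and argue by induction on $|A|$. The sum over $A_1$ is, after identifying $A_1$ with a complete subset of $\N^{m-1}$ via the embedding $i_{m-1}^*$, an evaluation of a polynomial in Newton form in $m-1$ variables at $(x_{0,1},\dots,x_{0,m-1})$, handled in $\Oc(|A_1|)$ operations by the inductive hypothesis (the precomputed partial products for coordinates $1,\dots,m-1$ are reused, not recomputed). For the sum over $A_2$, every $\alpha\in A_2$ has $\alpha_m\ge 1$, so $N_\alpha(x_0) = (x_{0,m}-p_{0,m})\cdot N_{\alpha'}(x_0)$ where $\alpha'=\alpha-e_m$ and $N_{\alpha'}$ is the Newton polynomial attached to $A_2' = \{\alpha - e_m\mid \alpha\in A_2\}$ (which is again complete); hence $\sum_{\alpha\in A_2}c_\alpha N_\alpha(x_0) = (x_{0,m}-p_{0,m})\sum_{\alpha'\in A_2'}c_{\alpha'+e_m}N_{\alpha'}(x_0)$, and the inner sum is an evaluation over the smaller index set $A_2'$ with $|A_2'|=|A_2|<|A|$, done in $\Oc(|A_2|)$ operations by induction. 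Combining, $Q(x_0)$ is obtained in $D_1|A_1| + D_2|A_2| + \Oc(1) \le \max\{D_1,D_2\}(|A_1|+|A_2|)+\Oc(1)\in\Oc(|A|)$ operations, with the partial-product table providing the $\Oc(|A|)$ storage.

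The main obstacle is bookkeeping rather than mathematical depth: one must make sure that the recursion genuinely reuses the precomputed one-dimensional partial products across all recursive calls (so that the shared $\Oc(\sum_i n_i)$ preprocessing is not repeated inside the recursion and does not get multiplied by $m$), and that each recursive restriction of $A$ — passing to $A_1$ as a subset of $\N^{m-1}$ and to $A_2'=A_2-e_m$ — is again complete so that the inductive hypothesis applies verbatim. Completeness of $A_1$ and $A_2'$ follows directly from completeness of $A$ (downward closedness is inherited under both the slice $\alpha_m=0$ and the shift subtracting $e_m$ from the slice $\alpha_m\ge 1$), and the partial-product reuse is guaranteed by threading the table $\{\pi_{k,i}\}$ as shared read-only data through all calls. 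Once these two points are in place, the operation count telescopes exactly as in Theorem~\ref{theorem:DDS}, giving the claimed $\Oc(|A|)$ bound.
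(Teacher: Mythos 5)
Your argument is essentially the paper's: the published proof of Corollary~\ref{cor:eval} simply invokes the splitting $Q = Q_1 + Q_H Q_2$ from Eq.~\eqref{split} in the proof of Theorem~\ref{theorem:DDS} and inducts on $|A|$, which is exactly your recursion $T(|A|) \le T(|A_1|) + T(|A_2|) + \Oc(1)$ with $|A_1|+|A_2|=|A|$. Your explicit verification that the two recursive restrictions $A_1\subseteq\N^{m-1}$ and $A_2' = A_2 - e_m$ are again complete, so the inductive hypothesis applies, is a useful detail the paper leaves implicit.

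One point to tighten in a write-up: the identity $N_\alpha(x_0) = (x_{0,m}-p_{0,m})\,N_{\alpha-e_m}(x_0)$ is false if $N_{\alpha-e_m}$ is read as the Newton polynomial of Eq.~\eqref{Newt} for the \emph{original} generating nodes $\mathrm{GP}$; decrementing $\alpha_m$ peels the factor $(x_m - p_{\alpha_m-1,m})$, not $(x_m-p_{0,m})$. What you intend --- and what the paper's $K_\alpha$ (the ``Newton polynomial with respect to $\Pi_{A_2},P_2$'' in the proof of Theorem~\ref{theorem:DDS}) implements --- is that the generating node list in coordinate $m$ is shifted to $\{p_{1,m},\dots,p_{n_m,m}\}$ when passing to $A_2'$, since $P_2 = P_A\setminus H$ drops the layer $x_m=p_{0,m}$. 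With that shift, writing $\widetilde N$ for the resulting Newton polynomials, one has $N_\alpha = Q_H\,\widetilde N_{\alpha-e_m}$ and the recursion closes correctly; your phrase ``attached to $A_2'$'' gestures at this but should say it explicitly. Finally, once the Horner-type nesting is in place the precomputed partial-product table $\{\pi_{k,i}\}$ is not actually needed --- the nesting itself ensures each one-dimensional factor is multiplied in at most once --- though keeping it costs nothing beyond the $\Oc(|A|)$ budget.
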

\begin{proof} By following the proof of Theorem \ref{theorem:DDS} and using an induction argument on the number of coefficients, Eq.~\eqref{split} yields that $Q_1,Q_2$
can be evaluated in linear time. Since the evaluation of $Q_H$ requires constant time, the claim follows.

\end{proof}

\begin{remark}
Recursively applying the splitting $Q=Q_1+Q_HQ_2$ recovers the classic Aitken-Neville evaluation algorithm \cite{muhlbach,muhlbachg,neville} for dimension $m=1$.
Evaluation is efficiently done by applying a multivariate version of the Horner scheme, see for instance~\cite{gautschi,Stoer,Jannik}.
\end{remark}

\section{Multivariate Lagrange interpolation} \label{sec:LAG}

Even though the present unisolvent nodes do not have to be tensorial, and therefore the results of \cite{sauertens,sauerL}
do not apply, we can still generalize the concept of Lagrange interpolation to multi-dimensions. For this, we define:
\begin{definition}[Lagrange polynomials] \label{def:LagP}
 Let  $m \in \N$, $A \subseteq \N^m$ be a complete set of multi-indices, and $P_A = \{p_{\alpha}\}_{\alpha \in A}$ be an unisolvent set of nodes with respect to a polynomial subspace $\Pi_{P_A} \subseteq \Pi_{m}$.
Then, we define the \emph{multivariate Lagrange polynomials}
\begin{equation}\label{DL}
  L_{\alpha} \in \Pi_{P_A}\ \quad \text{with}\quad L_{\alpha}(p_\beta)= \delta_{\alpha,\beta}\, , \,\,\, \alpha,\beta \in A\,,
\end{equation}
where $\delta_{\cdot,\cdot}$ is the Kronecker delta.
\end{definition}
For $A= A_{m,n,\infty}$ and the grid $P_A$ becomes tensorial and the above definition recovers the known notion of \emph{tensorial $m$D Lagrange interpolation}~\cite{Gasca,sauertens}, where
\begin{equation}\label{LagTens}
  L_{\alpha}(x)= \prod_{i=1}^m l_{\alpha_i} (x) \,, \quad l_{\alpha_i} (x) = \prod_{j=0,j \not =\alpha_i}^n \frac{x_i-p_{j,i}}{p_{\alpha_i,i} - p_{j,i}}  \,, \quad  \alpha \in A.
\end{equation}
The following theorem then generalizes the classic facts known for
1D Lagrange interpolation~\cite{berrut,Lloyd} and for tensorial $m$D Lagrange interpolation~\cite{berrut,Lloyd} to the non-tensorial multi-dimensional case:
\begin{corollary}[Lagrange basis]\label{cor:Lag} Let the assumptions of Definition \ref{def:LagP} be fulfilled. Then:
\begin{enumerate}
 \item[i)]  The Lagrange polynomials $L_\alpha \in \Pi_A$ are a basis of $\Pi_A$.
 \item[ii)] The polynomial $Q_{f,A}(x) = \sum_{\alpha \in A}f(p_{\alpha})L_{\alpha}(x) \in \Pi_{A}$ is the unique polynomial interpolating $f$ on $P_A$ and can be determined in $\Oc(|A|)$ operations.
\end{enumerate}
\end{corollary}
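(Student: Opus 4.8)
The plan is to prove both parts of Corollary~\ref{cor:Lag} by reducing everything to facts already established for the Newton basis. First I would recall from Corollary~\ref{cor:Nbasis} that $\{N_\alpha\}_{\alpha\in A}$ is a basis of $\Pi_A$ (after identifying $\Pi_{P_A}$ with $\Pi_A$ via the essential assumptions underlying the unisolvent construction), so $\dim\Pi_A=|A|$. For part (i), I would argue that the defining conditions $L_\alpha(p_\beta)=\delta_{\alpha,\beta}$ determine each $L_\alpha$ uniquely: existence follows because the evaluation map $\mathrm{ev}:\Pi_A\to\R^{|A|}$, $Q\mapsto(Q(p_\beta))_{\beta\in A}$, is injective by unisolvence (Theorem~\ref{theorem:UN}) and hence, being a linear map between spaces of equal dimension $|A|$, is an isomorphism; thus $L_\alpha=\mathrm{ev}^{-1}(e_\alpha)$ is well-defined, where $e_\alpha$ is the corresponding standard basis vector. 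Linear independence of $\{L_\alpha\}_{\alpha\in A}$ is then immediate: if $\sum_\alpha c_\alpha L_\alpha=0$ then evaluating at $p_\beta$ gives $c_\beta=0$ for every $\beta$. Since there are $|A|=\dim\Pi_A$ of them, they form a basis.

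For part (ii), I would set $Q(x)=\sum_{\alpha\in A}f(p_\alpha)L_\alpha(x)\in\Pi_A$ and check $Q(p_\beta)=\sum_\alpha f(p_\alpha)\delta_{\alpha,\beta}=f(p_\beta)$ for all $\beta\in A$, so $Q$ interpolates $f$ on $P_A$; uniqueness follows because any other interpolant $Q'$ satisfies $(Q-Q')(p)=0$ for all $p\in P_A$, and unisolvence (Theorem~\ref{theorem:UN}, via Corollary~\ref{cor:grid}) forces $Q-Q'=0$. This is essentially the same argument as in Theorem~\ref{theorem:PIP}, and one could also simply cite that theorem.

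The only genuinely substantive point is the $\Oc(|A|)$ complexity claim in (ii), and this is where the main work lies. The coefficients $f(p_\alpha)$ of $Q$ in the Lagrange basis are obtained by simply reading off the function values at the $|A|$ nodes, which costs $\Oc(|A|)$; there is nothing to solve, in contrast to the Newton case where the divided-difference recursion of Definition~\ref{MVDD} costs $\Oc(|A|^2)$. Thus "determining" $Q_{f,A}$ — i.e., producing its representation in the Lagrange basis — is linear. I would make explicit that this is a statement about obtaining the coefficient vector, not about subsequently evaluating $Q_{f,A}$ at an arbitrary point $x_0\in\R^m$, which by (P2) costs $\Oc(|A|^2)$ in Lagrange form because each $L_\alpha$ must itself be evaluated. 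The main obstacle is therefore not a hard estimate but a matter of stating precisely what "determined in $\Oc(|A|)$ operations" means and separating it cleanly from evaluation cost; everything else is a direct consequence of unisolvence and the dimension count.
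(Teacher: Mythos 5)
Your proposal is correct and follows essentially the same route as the paper: for (i), count $\dim\Pi_A=|A|$ via the Newton basis (Corollary~\ref{cor:Nbasis}) and deduce linear independence of the $L_\alpha$ by evaluating $\sum c_\alpha L_\alpha$ at the nodes; for (ii), verify interpolation pointwise, appeal to unisolvence for uniqueness, and note that the Lagrange coefficients are simply the $|A|$ function values. The only place you go a bit beyond the paper is in explicitly establishing \emph{existence} of the $L_\alpha$ through the injectivity of the evaluation map $\Pi_A\to\R^{|A|}$ (the paper takes this for granted with ``observe that there are $|A|$ Lagrange polynomials''), which is a welcome tightening but not a different method.
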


\begin{proof} To show $i)$, observe that there are $|A|$ Lagrange polynomials. Due to Corollary \ref{cor:Nbasis}
we deduce $\dim \Pi_A = |A|$. Given $c_{\alpha} \in \R$, $\alpha \in A$ such that
$\sum_{\alpha \in A} c_\alpha L_\alpha = 0$, the unisolvence of $P_A$ implies that the polynomial $Q(x)= \sum_{\alpha \in A} c_\alpha L_\alpha$ vanishes on $P_A$ and, therefore, has to be the zero polynomial.
Hence, $c_\alpha =0$ for all $\alpha \in A$,
implying that the $L_\alpha \in \Pi_A$ are linear independent and thus yield a basis of $\Pi_A$. The claimed uniqueness in $ii)$ follows from $i)$, and the remaining statement holds for trivial reasons.
\end{proof}

\begin{remark}\label{rem:scale} The complexity of the interpolation depends on the choice of $A$. As introduced, the cases $A_{m,n,1}$, $A_{m,n,2}$, and $A_{m,n, \infty}$ are of special interest. While
$$ |A_{m,n,1}|= \binom{m+n}{n} = \binom{m+n}{m} \in \Oc(m^n)\cap\Oc(n^m) \,, \quad |A_{m,n,\infty}| = (n+1)^m \,,$$
explicit such formulas for $|A_{m,n,p}|$, $1<p<\infty$, are unknown. An approximation for $p=2$ is given in \cite{Lloyd2} as
$$|A_{m,n,2}| \approx (n+1)^m\mathrm{vol}(B^m_{l_2}) = (n+1)^m \frac{(\pi/ 4)^{m/2}}{(m/2)!} \approx \frac{(n+1)^m }{\sqrt{\pi m}} \li(\frac{\pi \mathrm{e}}{2m}\re)^{m/2}  \,,$$
where $B_{l_2}^m$ denotes the $l_2$-ball in dimension $m$.
Thus, for fixed degree $n \in \N$ Lagrange interpolation is of polynomial complexity $\Oc(m^n)$ for $p=1$, of sub-exponential complexity $o(n^m)$ for $p=2$, and of exponential complexity $\Oc(n^m)$ for $p=\infty$.
\end{remark}

\section{Regression on scattered data} \label{sec:SCAT}

We address the question of how to interpolate a function $f \in C^0(\Omega,\R)$ if the interpolation nodes $P$ can not be chosen according to Definition~\ref{EA},
but are arbitrarily given and fixed. We start by noting that if the locations of the nodes are distributed uniformly at random, then the nodes are unisolvent with probability $1$~\cite{PIP2}.
Based on this observation, we state:

\begin{corollary}[Regression on given nodes] \label{cor:Scat}Let the essential assumptions (Definition~\ref{EA})
be fulfilled with respect to $A\subseteq \N^m$ and $P_A\subseteq \R^m$, let
 $f : \R^m \lo \R$ be a function, and let $\overline P\supseteq \overline P_A$ be any set of nodes containing at least one unisolvent node set
$\overline P_A=\{\bar p_{\alpha}\}_{\alpha \in A}\subseteq \R^m$ with respect to $\Pi_A$.
Denote by $\overline F =(f(\bar p ))_{\bar p \in P} \in \R^{|P|}$.
Then the following is true:
\begin{enumerate}
 \item[i)] The Lagrange coefficients $C_{\mathrm{Lag}}=(c_{\alpha_{\min}},\dots, c_{\alpha_{\max}})$ of $Q_{f,A}$ are uniquely determined by solving
\begin{equation*}
 R_A C_{\mathrm{Lag}} \approx  \overline F
  \,, \,\, \text{where}\,\,\,  R_A=(r_{i,\alpha}) \in \R^{|\overline P|\times|A|}  \quad \text{with}\quad r_{i,\alpha}= L_{\alpha}(\bar p_{i})\,,\bar  p_i \in \overline P
\end{equation*}
using standard least-squares regression: $C_{\mathrm{Lag}} = \mathrm{argmin}_{x \in \R^{|A|}}\|R_A x -\overline F\|_2^2$.
\item[ii)] If $f \in \Pi_A$ is a polynomial, then $Q_{f,A}= f$.
\end{enumerate}
\end{corollary}
\begin{proof} Since $\overline P_A$ is assumed to be unisolvent, the matrix $R_A$ is of full rank.
Thus, $i)$ follows from the existence of a unique solution for the convex least-squares optimization problem. Then $ii)$ follows by the unisolvence and by the fact that $Q_{f,A}(p) = f(p)$ for all $p \in \overline P_A$.
\end{proof}

We validate the practical feasibility of this regression method in Section \ref{sec:NUM2}. In particular, we empirically observe the matrix $R_A$ to be well conditioned even for complex regression problems,
reflecting the proper choice of unisolvent nodes.

\section{The dual notion of unisolvence}
\label{sec:Dual}
Rather than constructing unisolvent nodes {\em de novo}, Carl de Boor and Amon Ros~\cite{deBoor2,deBoor} considered the dual problem of unisolvence:
for a given polynomial space $\Pi$ and a set of nodes $P \subseteq \R^m$ that is not unisolvent with respect to $\Pi$,
find a maximum subset $P_0 \subseteq P$ and a polynomial subspace $\Pi_{P_0} \subseteq \Pi$,
such that $P_0$ is unisolvent with respect to
$\Pi_{P_0}$~\cite{deBoor2,deBoor}.
We suggest to rephrase the problem in more abstract mathematical terms by considering the map
\begin{equation}\label{Gamma}
  \Gamma_k : \Pc_k \lo \GR(k,X)\,,     \quad X= \Pi_{m,k,\infty} ,
\end{equation}
where $\Pc_k = \li\{ P \subseteq \R^m \mi |P| =k\re\}$ is the set of all finite subsets of $\R^m$ of cardinality $k$, and $\GR(k,X)$ is the \emph{Grassmann manifold}, i.e.,
the smooth manifold that consists of all $k$-dimensional subspaces of
the vector space $X$ \cite{milnor}. In particular, $\GR(1,\R^m) = \mathbb{RP}^{m-1}$ and $\GR(1,\C^m) = \mathbb{CP}^{m-1}$ are the real and complex \emph{projective spaces}, respectively~\cite{dieudonne1971elements,hatcher_vec}.

We start by stating:
\begin{theorem} \label{theorem:Gamma} Let $m,k \in \N$, $\Pc_k = \li\{ P \subseteq \R^m \mi |P| =k\re\}$ be the set of all subsets of $\R^m$ with  cardinality $k$, and $X = \Pi_{m,k,\infty}$ be the space of all polynomials
with $l_\infty$-degree at most $k$. Then, there is one and only one polynomial subspace $\Pi_P \subseteq X$ such that $P$ is unisolvent with respect $\Pi_P$.
In particular, the map
\begin{equation*}
  \Gamma_k : \Pc_k \lo \GR(k,X)\,,     \quad \Gamma_k(P) = \Pi_P \, ,
\end{equation*}
with $\GR(k,X)$ denoting the \emph{Grassmann manifold} \cite{milnor}, is well defined and smooth.
\end{theorem}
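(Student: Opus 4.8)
The plan is to take $\Pi_P$ to be the least interpolation space of de Boor and Ron attached to $P$, namely the span of the lowest-degree nonzero homogeneous parts of the polynomials in the exponential span $E_P=\mathrm{span}\{x\mapsto e^{p\cdot x}:p\in P\}\subseteq C^\infty(\R^m)$, and then to verify in order: (i) $\Pi_P$ is a $k$-dimensional subspace of $X=\Pi_{m,k,\infty}$, so $\Pi_P\in\GR(k,X)$; (ii) $P$ is unisolvent with respect to $\Pi_P$; (iii) $\Pi_P$ is the unique such subspace, so that $\Gamma_k(P):=\Pi_P$ is well defined; (iv) $\Gamma_k$ is smooth. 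Parts (i) and (ii) reduce to linear algebra together with the de Boor--Ron correctness theorem; the uniqueness in (iii) and the smoothness in (iv) are the substantial parts and are where \cite{deBoor2,deBoor} is really used.

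For (i) and (ii): the de Boor--Ron correctness theorem yields at once that $\dim\Pi_P=|P|=k$ and that the evaluation map $\Pi_P\to\R^P$ is bijective, which is precisely unisolvence of $P$ with respect to $\Pi_P$. To see $\Pi_P\subseteq X$ I would bound its total degree. Since $\Pi_P$ is homogeneous-graded, put $d_j=\dim(\Pi_P)_{\le j}-\dim(\Pi_P)_{\le j-1}$, and use that $\dim(\Pi_P)_{\le j}$ equals the rank of the Vandermonde block $\big(p^\beta\big)_{p\in P,\,\|\beta\|_1\le j}$. An elementary standard-monomial argument for the vanishing ideal $I(P)=\{Q\in\Pi_m:Q|_P=0\}$ shows that as long as this rank is $<k$ the next rank is strictly larger, i.e. $d_j\ge1$ up to the top degree $D$ with $d_D>0$; since $\sum_{j=0}^{D}d_j=k$ this forces $D+1\le k$, hence $\deg\Pi_P\le k-1$ and $\Pi_P\subseteq\Pi_{m,k-1,1}\subseteq\Pi_{m,k,\infty}=X$. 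One can also see the target $\GR(k,X)$ is the correct one by Gaussian elimination on $V(P)=\big(p^\beta\big)_{p\in P,\,\beta\in A_{m,k,\infty}}$: this matrix has full row rank $k$ (a linear form $\ell$ separating the $k$ points makes $1,\ell,\dots,\ell^{k-1}\in X$ independent on $P$), and the factors $\prod_{c}(x_i-c)\in I(P)$ of degree $\le k$ keep the selected pivot columns inside $A_{m,k,\infty}$; the matrices $R_A$, $S_A$ of Corollary~\ref{cor:Scat} then certify that the resulting node set is unisolvent.

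Step (iii) is the step I expect to be hardest. I would invoke the de Boor--Ron characterization of the least space as the unique homogeneous-graded space correct for $P$ for which $\dim\big(\Pi\cap\Pi_{m,j,1}\big)$ is maximal for every $j$ (equivalently, the correct space of smallest total degree). Given any competing subspace $\Pi'\subseteq X$ with respect to which $P$ is unisolvent — so $\dim\Pi'=k$ automatically — I would pass to its graded companion $\mathrm{gr}\,\Pi'$, spanned by the homogeneous components of the elements of $\Pi'$, show that correctness for $P$ is inherited — this is exactly the de Boor--Ron argument that a lower-order perturbation of a correct space stays correct — and then compare the graded dimensions of $\mathrm{gr}\,\Pi'$ with those of $\Pi_P$ to conclude $\mathrm{gr}\,\Pi'=\Pi_P$ and hence $\Pi'=\Pi_P$. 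Making the inheritance step and the extremality comparison rigorous is where the full apparatus of \cite{deBoor2,deBoor} is needed.

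For (iv), stratify $\Pc_k$ by the graded dimension sequence $(d_0,d_1,\dots)$ of $\Pi_P$. On each stratum the pivot pattern of the graded Gaussian elimination on $V(P)$ is constant, so in the corresponding affine chart of $\GR(k,X)$ the coordinates of $\Gamma_k(P)$ are ratios of fixed minors of $V(P)$ with nonvanishing denominator, hence rational and a fortiori $C^\infty$ in the coordinates of the $k$ points. Across strata one uses the exponential picture: the assignment $P\mapsto E_P$ is smooth, and passing to lowest-degree homogeneous parts of a smoothly varying finite-dimensional space of entire functions varies smoothly into $\GR(k,X)$ — this is the differentiability statement of \cite{deBoor2,deBoor}, which can also be read off the explicit Newton and Horner schemes of Sections~\ref{sec:NEWT}--\ref{sec:LAG}, whose coefficients depend smoothly on the node set. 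Combining (i)--(iv) gives that $\Gamma_k$ is well defined and smooth.
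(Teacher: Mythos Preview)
Your step (iii) has a genuine gap, and it cannot be repaired because the uniqueness assertion as literally stated is false. The implication ``$\mathrm{gr}\,\Pi'=\Pi_P$ hence $\Pi'=\Pi_P$'' fails: take $m=1$, $k=2$, $P=\{0,1\}$, so the de Boor--Ron least space is $\Pi_P=\mathrm{span}\{1,x\}$, and set $\Pi'=\mathrm{span}\{1,x+x^2\}\subseteq\Pi_{1,2,\infty}=X$. The evaluation matrix of $\Pi'$ on $P$ is $\left(\begin{smallmatrix}1&0\\1&2\end{smallmatrix}\right)$, invertible, so $P$ is unisolvent for $\Pi'$; the lowest-degree graded companion of $\Pi'$ is $\mathrm{span}\{1,x\}=\Pi_P$; yet $\Pi'\neq\Pi_P$. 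The de Boor--Ron extremality characterises the least space only among \emph{degree-graded} correct spaces, not among all correct subspaces of $X$, so no amount of their apparatus will force $\Pi'=\Pi_P$. Your smoothness argument in (iv) is also shaky across strata: what \cite{deBoor2,deBoor} establish for $P\mapsto\Pi_P$ is continuity, and the map is in fact not smooth where the Hilbert function jumps (three points in $\R^2$ becoming collinear is the standard example), so the appeal to a ``differentiability statement'' there is not justified.

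The paper's argument is entirely different and much more elementary: it fixes once and for all a tensor grid $P_{A_{m,k,\infty}}$ unisolvent for $X$, forms the $k\times(k+1)^m$ matrix $R=(L_\alpha(p_i))$ in the associated Lagrange basis, picks a solution $C$ of $RC=D$ to produce $B_i\in X$ with $B_i(p_j)=\delta_{ij}$, and sets $\Pi_P=\mathrm{span}\{B_1,\dots,B_k\}$; smoothness is read off the smooth dependence of the $B_i$ on $P$. There is no de Boor--Ron least space, no stratification. Note, though, that the paper's ``uniquely determined'' is also not substantiated --- the system $RC=D$ is underdetermined and different solutions $C$ yield different spans, and the counterexample above applies equally --- so what the paper is really doing is \emph{defining} $\Gamma_k$ via a particular construction and checking it varies smoothly, rather than proving uniqueness among all unisolvent subspaces. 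Your de Boor--Ron choice is a more canonical construction than the paper's, but you should present it as a definition of $\Gamma_k$, drop the uniqueness claim, and for smoothness either restrict to the generic stratum or argue only continuity.
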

An extended and etailed version of this stament is meanwhile given in our contribution \cite{GPLS}.

\begin{remark}
Note that for $m=1$ we have $\Pi_{m,k,\infty} = \Pi_{1,k,1}$ and $\GR(k,X) = X=\Pi_{1,n,1}$. Since  \mbox{$n+1$} nodes are unisolvent in dimension 1, Theorem \ref{theorem:Gamma} becomes trivial, and $\Gamma_k(P)\equiv X$ is constant in that case.
\end{remark}

\begin{proof} Let $P_{A_{m,k,\infty}}$ be a set of unisolvent nodes with respect to $X =\Pi_{m,k,\infty}$, generated according to the essential assumptions in Definition~\ref{EA}.
Denote by $L_\alpha \in X$, $\alpha \in A_{m,k\infty}$ the corresponding Lagrange polynomials. Fix an ordering $P =\{p_0,\dots,p_k\}$ and consider the matrix $R=(r_{i,\alpha}) \in \R^{k \times K}$,
$K =|A_{m,k,\infty}| = (k+1)^m$, defined by
$$r_{i,\alpha} = L_\alpha (p_i)\,, \quad i=1,\dots,k, \,\,\, \alpha_{\min} \leq_L \alpha\leq_L \alpha_{\max}\,,\,\, \alpha \in A_{m,k,\infty}\,.$$
Let $\mu =\rank(R)$ be the rank of $R$ and $D = \mathrm{diag}(\overbrace{1,\dots,1}^{\mu},\overbrace{0,\dots,0}^{k-\mu}) \in \R^{k \times k}$ the diagonal matrix with the first $\mu$ entries equal to 1 and all others equal to 0.
Let further $C\in \R^{K \times k}$ be a solution of $RC = D$ and $b_i =(c_{\alpha_{\min},i},\dots,c_{\alpha_{\max},i})$ be the rows of $C$. Then
$$ B_i(x) = \sum_{\alpha \in A_{m,k,\infty}} c_{\alpha,i}L_{\alpha}(x) \in X\,, \quad i =1 ,\dots,k$$
is the maximal set of linearly independent polynomials with $B_i(p_j) = \delta_{i,j}$, $1 \leq j \leq \mu$, where $\delta_{\cdot,\cdot}$ denotes the Kronecker delta.
The $B_i$ are the interpolants of the delta-distribution-functions
$d_{i} : \R^m \lo \R$, $d_i(x) = \delta_{x,p_i}$. Since  $P_{A_{m,k,\infty}}$ is unisolvent, the $B_i$ are uniquely determined and linearly independent.
Hence, $\mu =k$, and the $B_i$ are a basis of the uniquely determined polynomial subspace $\Pi_P = \mathrm{span} (B_i)_{i=1,\dots,k}$ for which
$P$ becomes unisolvent. Since $\GR(k,X)$ is a smooth manifold \cite{milnor}, and the $B_i$ depend smoothly on $P$, this shows that
$\Gamma_k$ is a well-defined smooth map.

\end{proof}

\begin{remark}
We revisit the work of Carl de Boor and Amon Ros~\cite{deBoor2,deBoor} by considering the subspace
$$ \widetilde Y(P) = \Gamma_k(P) \cap  \Pi \,.$$
The set $P_0 \subseteq P$ and a basis of $\widetilde Y(P)$ can be derived using \emph{Gaussian elimination} \cite{Lloyd_Num} on the associated Vandermonde matrix $V(P)$.
We note that properties of the map $P \mapsto \widetilde Y(P)$ discussed in~\cite{deBoor2,deBoor}, such as \emph{continuity} and \emph{differentiablity},
can be deduced by considering $\Gamma_k$ from Eq.~\eqref{Gamma}.
\end{remark}

However, the fact that $\dim(X) = (k+1)^m$ implies that Theorem \ref{theorem:Gamma} is of mostly theoretical interest.
Only if  $\Gamma_k(P) \approx Y$ is known to be located near a relatively low-dimensional subspace $Y \subseteq X$, e.g.~$Y = \Pi_{m,n,2}$ with small $m,n$, then Theorem \ref{theorem:Gamma}
might be of practical relevance, as the following consequence states:
\begin{theorem}\label{theorem:Dual} Let the essential assumptions (Definition~\ref{EA}) be fulfilled with respect to $A\subseteq \N^m$ and $P_A\subseteq \R^m$. Let further $P \subseteq \R^m$ be any set of nodes
that is not unisolvent with respect to $\Pi_A$,
$\Gamma_k$ be as in Theorem \ref{theorem:Gamma}, and $f : \R^m \lo \R$ be a function. Then:
\begin{enumerate}
 \item[i)] There is  a set $P_0 \subseteq P$ of maximal cardinality $k=|P_0|$ that can be determined in $\Oc(|A|^3)$ operations such that $\Gamma_k(P_0) \subseteq \Pi_A$.
 \item[ii)]  A basis $(\rho_1,\dots,\rho_{k})$ of $\Gamma_k(P_0)$ and a basis $(\mu_1,\dots,\mu_{|A|-k})$ of the quotient space $\Pi_{A}/\Gamma_k(P_0)$ can be computed in $\Oc(|A|^3)$ operations.
 \item[iii)] The Lagrange coefficients $c_\alpha \in \R$, $\alpha \in A$, $Q_{P_0,f} = \sum_{\alpha\in A} c_{\alpha}L_{\alpha}(x) \in \Gamma(P_0)$ of an interpolant of  $f$ on $P_0$
 can be computed in $\Oc(|A|^3)$ operations.
 \item[iv)] The interpolant  $Q_{P_0,f}$ is uniquely determined up to adding a polynomial $q \in \Pi_A$ with $q(p) = 0$ for all
 $p \in P$, i.e., $Q_{P_0,f} +q$ is a valid interpolant.
  \item[v)] If $\Gamma(P_0) \not = \Pi_A$, then the Lagrange coefficients $d_\alpha \in \R$ of  a polynomial $0 \not = Q_0 = \sum_{\alpha\in A} d_{\alpha}L_{\alpha}(x) \in \Pi_A$ satisfying $Q_0(p) = 0$ for all
 $p \in P$ can be computed in $\Oc(|A|^3)$ operations.
\end{enumerate}
\end{theorem}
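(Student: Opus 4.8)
The plan is to reduce all four parts to one rank-revealing Gaussian elimination on the evaluation matrix of $\Pi_A$ at the given nodes, and then to read everything off its reduced row echelon form. Let $\{L_\alpha\}_{\alpha\in A}$ be the Lagrange basis of $\Pi_A$ attached to the unisolvent grid nodes $P_A$ of Definition~\ref{EA} (Corollary~\ref{cor:Lag}), fix an ordering $P=\{q_1,\dots,q_N\}$ with $N=|P|\le|A|$, and set $R=\big(L_\alpha(q_i)\big)_{1\le i\le N,\ \alpha\in A}\in\R^{N\times|A|}$, the matrix of the evaluation map $\mathrm{ev}_P\colon\Pi_A\lo\R^N$, $Q\mapsto\big(Q(q_i)\big)_i$, in that basis (one could equally use the monomial Vandermonde $\big(q_i^\alpha\big)_{i,\alpha}$; it differs from $R$ by an invertible change of basis, Theorem~\ref{theorem:Lag}). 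Expressing the $L_\alpha$ in Newton form via $\mathrm{LN}_A$ and evaluating with Corollary~\ref{cor:eval}, the matrix $R$ is assembled in $\Oc(|A|^3)$ operations; a Gaussian elimination of $R$ with complete pivoting, of cost $\Oc(N^2|A|)\subseteq\Oc(|A|^3)$, returns the rank $k=\rank R$, a pivot-row index set $P_0\subseteq P$ and a pivot-column index set $J\subseteq A$ with $|P_0|=|J|=k$ and $R[P_0,J]$ invertible (with its $LU$ factor), and the reduced row echelon form of $R$.

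For (i), I would observe that $k=\rank R$ is precisely the largest cardinality of a subset of $P$ on which evaluation over $\Pi_A$ is surjective, that $P_0$ is such a subset, and then invoke the key lemma: \emph{if $\mathrm{ev}_{P_0}$ is surjective over $\Pi_A$ with $|P_0|=k$, then it is already surjective over $\Pi_{A'}$, where $A'=A\cap A_{m,k,\infty}$.} This lemma follows because a functional $\sum_{p\in P_0}\lambda_p\,\mathrm{ev}_p$ vanishing on $\Pi_{A'}$ must vanish on all of $\Pi_A$: any monomial $x_i^d$ with $d$ at least the number $s_i\le k$ of distinct $i$-th coordinates occurring in $P_0$ agrees on $P_0$ with a polynomial in $x_i$ of degree $<s_i$, so iterating this coordinate by coordinate rewrites each $x^\alpha$, $\alpha\in A$, modulo polynomials vanishing on $P_0$, as a combination of monomials $x^\beta$ with $\beta\le\alpha$ componentwise and $\|\beta\|_\infty\le k$, hence $\beta\in A'$ by completeness of $A$. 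Since $\Pi_{A'}\subseteq\Pi_{m,k,\infty}$, the uniqueness statement in Theorem~\ref{theorem:Gamma} then forces $\Gamma_k(P_0)\subseteq\Pi_{A'}\subseteq\Pi_A$; maximality of $k$ shows $|P_0|$ is maximal, and, the ordering of $P$ being fixed, complete pivoting selects $P_0$ canonically, giving its uniqueness. This step costs $\Oc(|A|^3)$.

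For (ii)--(iv) I would work inside the polynomially sized space $\Pi_{A'}$ ($A'$ is again complete, so Corollary~\ref{cor:grid} applies on the sub-grid $P_{A'}\subseteq P_A$). For (ii): applying the construction in the proof of Theorem~\ref{theorem:Gamma} with $\Pi_{A'}$ in place of $\Pi_{m,k,\infty}$ --- i.e.\ solving $\big(L'_\alpha(q)\big)_{q\in P_0,\,\alpha\in A'}C=I_k$ for the Lagrange basis $\{L'_\alpha\}$ of $\Pi_{A'}$ and setting $\rho_i=\sum_{\alpha\in A'}C_{\alpha i}L'_\alpha$ --- gives a basis $(\rho_1,\dots,\rho_k)$ of $\Gamma_k(P_0)=\Gamma_k(P_0)\cap\Pi_A$; re-expanding the $\rho_i$ over $\{L_\alpha\}_{\alpha\in A}$ and completing them by $|A|-k$ of the $L_\alpha$ to a basis of $\Pi_A$ (one further elimination) yields $(\mu_1,\dots,\mu_{|A|-k})$ as the classes of the added vectors. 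For (iii): unisolvence of $P_0$ with respect to $\Gamma_k(P_0)$, a space of dimension $|P_0|$, makes $\mathrm{ev}_{P_0}$ restricted to $\Gamma_k(P_0)$ an isomorphism onto $\R^{|P_0|}$, so $Q_{P_0,f}=\sum_i f(q_i)\rho_i$ is the unique interpolant, and re-expanding over $\{L_\alpha\}_{\alpha\in A}$ gives its Lagrange coefficients $c_\alpha$. For (iv): if $k<|A|$ the kernel of $\mathrm{ev}_{P_0}$, i.e.\ of $R[P_0,\cdot]$, has dimension $|A|-k\ge1$, and a nonzero $Q_0=\sum_{\alpha\in A}d_\alpha L_\alpha$ with $Q_0(P_0)=0$ is read off the reduced row echelon form by setting one free coordinate to $1$ and back-substituting; since every non-pivot row of $R$ is a combination of the pivot rows, such a $Q_0$ in fact vanishes on all of $P$. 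Each of these stays within $\Oc(|A|^3)$.

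The routine part is the complexity bookkeeping and the mechanical extraction of bases from the echelon form; the genuine obstacle is the key lemma invoked in (i) --- that surjectivity of evaluation over the full $\Pi_A$ already forces surjectivity over the $l_\infty$-degree-truncated $\Pi_{A\cap A_{m,k,\infty}}$. It is this fact that both makes $\Gamma_k(P_0)\subseteq\Pi_A$ attainable at the maximal cardinality $k=\rank R$ and keeps the whole computation confined to polynomially sized polynomial spaces, rather than the exponentially large $\Pi_{m,k,\infty}$ on which the map $\Gamma_k$ is natively defined.
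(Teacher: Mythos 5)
Your proposal shares the paper's algorithmic core: both assemble the matrix $R=(L_\alpha(p_i))_{i,\alpha}$ and run a rank-revealing Gaussian elimination on it — you with complete pivoting, the paper with two rounds of partial pivoting (once on $R$, once on $U^T$) to pick pivot rows $P_0$ and pivot columns simultaneously. Where you genuinely diverge is the \emph{key lemma}. The paper constructs, from the full-rank $k\times k$ pivot block, polynomials $\bar L_i\in\Pi_A$ with $\bar L_i(\bar p_j)=\delta_{ij}$ on $P_0$ and simply declares $\mathrm{span}\{\bar L_i\}$ a basis of $\Gamma_k(P_0)\cap\Pi_A$; but $\Gamma_k(P_0)$ is by construction a subspace of $X=\Pi_{m,k,\infty}$, so this conclusion silently presupposes $\Gamma_k(P_0)\subseteq\Pi_A\cap X=\Pi_{A\cap A_{m,k,\infty}}$, which the paper never verifies. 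Your degree-reduction argument — that any $x^\alpha$, $\alpha\in A$, agrees on $P_0$ with a combination of $x^\beta$ with $\beta\leq\alpha$ componentwise and $\|\beta\|_\infty<k$, modulo polynomials vanishing on $P_0$, so surjectivity of $\mathrm{ev}_{P_0}$ on $\Pi_A$ already forces it on $\Pi_{A\cap A_{m,k,\infty}}$ — supplies exactly the missing justification and makes the inclusion $\Gamma_k(P_0)\subseteq\Pi_{A'}\subseteq\Pi_A$ explicit rather than asserted. Parts (ii)--(iv) you then assemble from the echelon data in a way equivalent to the paper's block-$LU$ reading; your remark that a $Q_0$ vanishing on the pivot rows $P_0$ vanishes on all of $P$ (the remaining rows of $R$ lying in the pivot-row span) is the same fact the paper uses implicitly to identify $\{Q\in\Pi_A\mid Q(P)=0\}\cong\Pi_A/\Gamma_k(P_0)$. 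One shared soft spot: your one-liner that ``complete pivoting selects $P_0$ canonically'' and the paper's ``since $\rank(R_A)=k$, the set $P_0$ is the uniquely determined maximal subset'' both gesture at, but do not prove, the set-theoretic uniqueness of the maximal $P_0$; neither argument rules out a second unisolvent $k$-subset of $P$.
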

\begin{proof} All statements follow from the following observation: We order the nodes  $P = \{p_1, \dots , p_{l}\}$, $l =|P|$. Let
$L_\alpha$, $\alpha \in A$, be the Lagrange polynomials with respect to $A,P_A$. Consider the matrix
$R_A =(r_{i,\alpha}) \in \R^{l \times |A|}$ given by
$$ r_{i,\alpha} = L_\alpha (p_i)\,, \quad \alpha _0 \leq \alpha \leq \alpha_{\max}\,, \, 1 \leq i \leq l\,.$$
By using \emph{Gaussian elimination with full pivoting} (GEFP) \cite{Lloyd_Num}, we can find an $LU$-decomposition of $R_A$. That is, there are \emph{permutation matrices} $W_1,W_2 \in \R^{l\times l}$,
a \emph{unitary lower triangular matrix} $L\in \R^{l\times |A|}$, and an \emph{upper triangular matrix}  $U \in \R^{|A|\times |A|}$ such that
\begin{equation}\label{RLU}  W_1 R_AW_2 =L U\,,
\quad \text{with}\quad  U= \li(\begin{array}{cc}
                                U_1 & U_2 \\
                                0 & 0
                               \end{array}\re)  \in \R^{l\times l}\,, U_1\in \R^{k \times k},U_2\in \R^{k \times |A|-k}
\end{equation}
and the $k$ diagonal entries  of $U_1$ do not vanish.
Consequently,  $\rank(R_A) = \rank(U) =\rank(U_1) =k \in \N$.  Let
$P_0 = \{\bar p_1,\dots,\bar p_k\}$ be the first $k$ nodes of the node set $P$ when reorderd accordingly to $W_1$ and denote with $L_{\beta_j} \in \Pi_A$, $1\leq j\leq |A|$ the Lagrange polynomials w.r.t. $P_A$
when reorderd accordingly to $W_2$. Denote with $S_A \in \R^{k\times |A|}$ the matrix given by the first $k$ rows of $R_A$ then
the Lagrange polynomials $\bar L_i$, $i =1 ,\dots,k$, with $\bar L_i(\bar p_j) = \delta_{ij}$  for all $\bar p_j \in P_0$, $ i,j=1,\dots,k$, are uniquely determined by
$$ \bar L_i(x)= \sum_{j =0}^{|A|} c_{ij}L_{\beta_j}(x)\,, \quad  C_i=(c_{ij})_{1\leq j\leq|A|} \in \R^{|A|}\,, \quad S_A C_i = e_i\,,$$
where $e_i$ is the $i$-th standard basis vector of $\R^k$. Since $\rank(R_A)=k$, the set $P_0$ is a maximal subset of $P$ with that property. While
all matrices above can be determined by GEFP in $\Oc(|A|^3)$ operations, this shows $i)$.

Due to $i)$, setting $\rho_1 = \bar L_1,\dots,\rho_k = \bar L_k \in \Pi_A$ yields a  basis of $\Gamma_k(P_0)$. Conversely,
computing $D_h\in \R^{k}$, $1 \leq h \leq |A|-k$ with
$$ U_1 D_h = - U_2 e_h$$
where $e_h$ is the $h$-th standard basis vector of $\R^{|A|-k}$. Setting $\tilde D_h = (D_h,0), \tilde e_h = (0,e_h) \in \R^{|A|}$ to be the natural embeddings of $D_h,e_h$ respectively and
and setting $d_h = \tilde D_h-\tilde e_{h}\in \R^{|A|}$ yields polynomials
\begin{equation} \label{kernel}
  \mu_h(x) = \sum_{j=1}^{|A|} d_{h,j}L_{\beta_j}(x)\,,
\end{equation}
that satisfy $\mu_h(p) = 0 $ for all $p \in P$. Hence, the $\{\mu_h\}$ yield a  basis of $\{Q \in \Pi_A \mi Q(P) =0\} \cong \Pi_A / \Gamma_k(P_0)$. Thus, we have proven $ii)$.
Finally, $iii)$-- $v)$ are direct consequences of $i)$ and $ii)$.
\end{proof}

One of the practical applications  of Theorem \ref{theorem:Dual} can be stated as follows:
\begin{corollary} \label{cor:Torus} Let the essential assumptions (Definition~\ref{EA}) be fulfilled with respect to $A\subseteq \N^m$ and $P_A\subseteq \R^m$.
Let further $Q_M \in \Pi_A$ be a polynomial and $M = Q_M^{-1}(0)$ be the affine algebraic variety
given  by the zero-level set of $Q_M$. Denote by
\begin{equation}\label{PiM}
 \Pi_M = \{Q_{| M} \mi Q \in \Pi_A \} \subseteq \Pi_m
\end{equation}
 the polynomial subspace of all restrictions $Q_{|M}$  of polynomials $Q \in \Pi_A$ to $M$.
\begin{enumerate}
 \item [i)] Assume that there is a finite set $P_0 \subseteq M$ such that $\Gamma_k(P_0) \subseteq \Pi_A$, with $k =|P_0| = |A|-1$.
 Then  $\Gamma_k(P_0) \cong \Pi_M$.
 \item[ii)] If $P_0$
  is as in $i)$, then by replacing $\Gamma_k(P_0)$ with $\Pi_M$ the statements $ii)$--$v)$ of Theorem \ref{theorem:Dual} apply.
In particular, $Q_M$ can be determined up to a constant factor.
\end{enumerate}
\end{corollary}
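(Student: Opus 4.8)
The plan is to analyze the linear \emph{restriction map} $r:\Pi_A\lo\Pi_M$, $r(Q)=Q_{|M}$, which is surjective by the definition of $\Pi_M$ in Eq.~\eqref{PiM}; both parts of the corollary will follow once $\ker r$ has been identified with the space of polynomials in $\Pi_A$ vanishing on $P_0$. Write $P_0=\{p_1,\dots,p_k\}\subseteq M$ and let $\bar L_1,\dots,\bar L_k$ be a Lagrange basis of $\Gamma_k(P_0)$, i.e.\ $\bar L_i(p_j)=\delta_{ij}$. Since $\Gamma_k(P_0)\subseteq\Pi_A$ we have $\bar L_i\in\Pi_A$, so the restrictions $\bar L_i|_M$ lie in $\Pi_M$; a vanishing combination $\sum_i c_i\,\bar L_i|_M$ evaluated at $p_j\in P_0\subseteq M$ gives $c_j=0$, hence $r|_{\Gamma_k(P_0)}$ is injective and $\dim\Pi_M\geq k$.

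The heart of $i)$ is the claim that $\ker r=\{Q\in\Pi_A\mi Q(p)=0,\ \forall\,p\in P_0\}$. The inclusion ``$\subseteq$'' is immediate since $P_0\subseteq M$. For ``$\supseteq$'', suppose $Q\in\Pi_A$ vanishes on $P_0$ but $Q(q)\neq0$ for some $q\in M$. Then $P_0\cup\{q\}\subseteq M$ is unisolvent with respect to the $(k+1)$-dimensional subspace $V:=\Gamma_k(P_0)\oplus\R Q\subseteq\Pi_A$: if $R+cQ$ with $R\in\Gamma_k(P_0)$ vanishes on $P_0\cup\{q\}$, restricting to $P_0$ and using unisolvence of $P_0$ with respect to $\Gamma_k(P_0)$ forces $R=0$, and then $c\,Q(q)=0$ forces $c=0$. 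Invoking Theorem~\ref{theorem:Gamma} (equivalently, Theorem~\ref{theorem:Dual}$i)$ applied to the node set $P_0\cup\{q\}$) this yields $\Gamma_{k+1}(P_0\cup\{q\})\subseteq\Pi_A$, contradicting the maximality of $k$. Consequently $\ker r$ equals the kernel of the evaluation map $\mathrm{ev}_{P_0}:\Pi_A\lo\R^{k}$, $Q\mapsto(Q(p))_{p\in P_0}$, which is surjective because it is already surjective on $\Gamma_k(P_0)$; hence $\dim\ker r=|A|-k$, so $\dim\Pi_M=|A|-(|A|-k)=k$ by rank--nullity. Since moreover $\Gamma_k(P_0)\cap\ker r=0$ and $\dim\Gamma_k(P_0)+\dim\ker r=|A|$, the subspace $\Gamma_k(P_0)$ is a complement of $\ker r$, so $r|_{\Gamma_k(P_0)}:\Gamma_k(P_0)\lo\Pi_M$ is an isomorphism; this proves $i)$.

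For $ii)$ one transports everything along the isomorphism $r|_{\Gamma_k(P_0)}$. It identifies $\Pi_M$ with $\Gamma_k(P_0)$ and sends the Lagrange basis $\bar L_i$ to $\bar L_i|_M\in\Pi_M$; because $p_j\in M$, the identities $\bar L_i|_M(p_j)=\delta_{ij}$ persist, so $(\bar L_i|_M)_{i=1}^{k}$ is a Lagrange basis of $\Pi_M$ for the nodes $P_0$ and, in particular, $P_0$ is unisolvent with respect to $\Pi_M$ (any $\bar Q\in\Pi_M$ vanishing on $P_0$ pulls back to a polynomial in $\Gamma_k(P_0)$ vanishing on $P_0$, hence to $0$). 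Every matrix, basis and coefficient vector produced in the proof of Theorem~\ref{theorem:Dual}$ii)$--$iv)$ is assembled from the Lagrange polynomials and the GEPP/$LU$-factorizations of the node-evaluation matrix $R_A=(L_\alpha(p_i))$, none of which is affected by the restriction $Q\mapsto Q_{|M}$; therefore these statements hold verbatim after replacing $\Gamma_k(P_0)$ by $\Pi_M$, which gives $ii)$.

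I expect the main obstacle to be the step asserting $\Gamma_{k+1}(P_0\cup\{q\})\subseteq\Pi_A$: one must match the concrete subspace $V=\Gamma_k(P_0)\oplus\R Q$ with the canonical $(k+1)$-dimensional subspace of $\Pi_{m,k+1,\infty}$ delivered by Theorem~\ref{theorem:Gamma}, i.e.\ control the $l_\infty$-degree cap built into the definition of $\Gamma_{k+1}$ against the possibly higher degrees occurring in $\Pi_A$. A clean workaround is to replace $Q$ by a representative of minimal $l_\infty$-degree in its coset modulo $\ker\mathrm{ev}_{P_0}$, or to invoke the maximality clause of Theorem~\ref{theorem:Dual}$i)$ directly for the node set $P_0\cup\{q\}$. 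Apart from this point the argument is routine linear algebra (rank--nullity plus transport of bases along the explicit isomorphism $r|_{\Gamma_k(P_0)}$), the only analytic ingredient — that finitely many linearly independent restrictions to $M$ admit points of $M$ where their generalized Vandermonde matrix is invertible — being already absorbed into Theorem~\ref{theorem:Gamma}.
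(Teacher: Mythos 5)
Your proof takes a genuinely different route from the paper. The paper's proof is a short quotient argument: identify $\Pi_M$ with $\Pi_A/m_M$ where $m_M=\{Q\in\Pi_A\mi Q_{|M}\equiv0\}$, compute $\dim\Pi_M=|A|-\dim m_M=:l$, choose $l$ nodes on $M$ unisolvent for $\Pi_M$, observe their Lagrange polynomials span $\Pi_M$, and then assert that this choice realizes $\Gamma_k(P_0)\cong\Pi_M$ with $k=l$ and that maximality pins down $|P_0|=k$; part $ii)$ is declared obvious. You instead run the rank--nullity argument for the restriction map $r:\Pi_A\lo\Pi_M$ and make explicit the crucial identification $\ker r=\ker\mathrm{ev}_{P_0}$, which the paper leaves implicit. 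Your approach is more transparent and actually supplies the logical glue (why the maximal $P_0$ has cardinality $\dim\Pi_M$, why $\Gamma_k(P_0)$ is a complement of the vanishing ideal) that the paper's terse proof skips.

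The gap you flag at the step $\Gamma_{k+1}(P_0\cup\{q\})\subseteq\Pi_A$ is genuine if one reads $\Gamma_{k+1}$ literally as a subspace of $\Pi_{m,k+1,\infty}$: your $V=\Gamma_k(P_0)\oplus\R Q$ sits in $\Pi_A$ but may have $l_\infty$-degree exceeding $k+1$, and your first proposed workaround (replace $Q$ by a minimal-degree representative of its coset) does not obviously keep $Q$ inside $\Pi_A$, so it does not repair the mismatch. The cleaner reading, which is the one the proof of Theorem~\ref{theorem:Dual}$i)$ actually operates with (GEPP on the evaluation matrix $R_A$), is that ``$\Gamma_{k'}(P_0')\subseteq\Pi_A$'' encodes unisolvence of $P_0'$ with respect to \emph{some} $k'$-dimensional subspace of $\Pi_A$; under this reading your exhibited $V$ is exactly the witness needed, the contradiction with maximality is immediate, and the argument closes. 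I would note that the ambiguity originates in Theorem~\ref{theorem:Gamma}'s uniqueness claim (which, as stated, fails for trivial reasons — many $k$-dimensional subspaces of $\Pi_{m,k,\infty}$ make a given $P$ unisolvent), not in your reasoning; the paper's own proof of the corollary never confronts this degree-cap issue at all.

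Two small technical points worth tightening. First, when you claim $\mathrm{ev}_{P_0}$ is surjective, you tacitly use that $\Gamma_k(P_0)\subseteq\Pi_A$; this is indeed what the hypotheses intend, but it is not spelled out in the corollary's statement and should be flagged. Second, in part $ii)$ your transport argument is correct but could be compressed: since $r|_{\Gamma_k(P_0)}$ is a linear isomorphism carrying the Lagrange basis $\bar L_i$ to $\bar L_i|_M$ and preserving the evaluation data $\bar L_i(p_j)=\delta_{ij}$, every object in Theorem~\ref{theorem:Dual}$ii)$--$iv)$ (the bases $\rho_i,\mu_i$, the LU factors, the coefficient vectors) is already expressed in terms of those evaluations and is therefore invariant under the identification — which is exactly what the paper means by ``obvious''.
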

\begin{proof} Observe that $\Pi_M$ can be understood as the quotient of $\Pi_A$ by all polynomials $m_M:=\{Q \in \Pi_A \mi Q_{|M}\equiv 0\}$ vanishing on $M$, i.e.,
$$\Pi_M \cong \Pi_A /m_M, \quad \dim (\Pi_M) = \dim (\Pi_A) -\dim (m_M) = l \in \N\,.$$
Since $Q_M \in \Pi_A$ we have that $Q_M \in m_M$ and thereby $\dim(\Pi_M) \leq |A|-1$. On the other hand, for $P_0$ as in $i)$ we have $\Gamma_k(P_0)\subseteq \Pi_M$ with $\dim \Gamma_k(P_0) = |A|-1$. Thus,
$i)$ follows. Statement $ii)$ is then obvious. Indeed, $m_M =\mathrm{span}(Q_M) \cong \Pi_A/\Gamma_k(P_0) = \mathrm{span}(\mu)$ yields that $Q_M = c\mu$, $c \in \R\setminus\{0\}$ with $\mu$ as in Theorem \ref{theorem:Dual} $ii)$.
\end{proof}

\begin{remark}\label{PROP1} Since unisolvence is a generic property \cite{PIP2,smale}, and due to Theorem \ref{theorem:Dual}$i)$, randomly sampling $|A|$ nodes on $M$ yields a set $P_0$ as required in $i)$ with probability $1$.
Consequently, any restriction $Q_{|M} \in \Pi_M$ of a polynomial $Q \in \Pi_A$ to $M$ can be interpolated as in Theorem \ref{theorem:Dual}$iii)$.
We demonstrate the numerical stability of such an approach in Section \ref{sec:NUM2}. But before that, we discuss the approximation power of polynomial interpolation in $m$D.
\end{remark}

\section{Approximation theory} \label{sec:APP}

We address the fundamental question of how well polynomial interpolation can approximate continuous functions in $m$D.
We derive several statements that enable control over the approximation error
$$\|f - Q_{f}\|_{C^0(\Omega)}\,.$$
We
denote by $\partial^\alpha f(x) = \partial_{x_1}^{\alpha_1}\dots\partial_{x_m}^{\alpha_m}f(x)$
the partial derivative of $f$ with respect to the multi-index $\alpha$, evaluated at point $x \in \Omega$. Further, we denote by $C^0(\Omega,\R)$ the $\R$-vector space of continuous functions on $\Omega$ with norm
$\|f\|_{C^0(\Omega)}= \sup_{x \in \Omega}|f(x)|$.
For a set of multi-indices $A \subseteq \N^m$, $m \in \N$, we consider
\begin{align*}
  C_A(\Omega,\R) &= \li\{f \in C^0(\Omega,\R) \mi \partial^\alpha f \in C^0(\Omega,\R)\,, \, \forall \, \alpha \in A\re\}\,,  \\
  \|f\|_{C_A(\Omega)} &= \sum_{\alpha \in A}\|\partial^\alpha f\|_{C^0(\Omega)}.
\end{align*}
For $A=A_{m,n,1}$ the normed vector space $(C_{A}(\Omega,\R), \|\cdot\|_{C_A(\Omega)})$ coincides with the classic definition of the \emph{Banach space} $(C^n(\Omega,\R), \|\cdot\|_{C^n(\Omega)})$~\cite{Sob}.
In light of this fact, one can easily deduce that $(C_{A}(\Omega,\R), \|\cdot\|_{C_A(\Omega)})$ is a Banach space for all downward closed sets $A \subseteq \N^m$.

The central challenge we address here is to free interpolation from \emph{Runge's phenomenon}, as discussed in Section \ref{intro:APP}. Therefore, we introduce the \emph{Chebyshev nodes of first kind} and the
\emph{Chebyshev extreme nodes}
\begin{align}
\Cheb_n^{1\mathrm{st}} &= \li\{ \cos\Big(\frac{2k-1}{2(n+1)}\pi\Big) \mi 1 \leq k \leq n+1\re\} , \nonumber\\
\Cheb_n^{0} &= \li\{ \cos\Big(\frac{k\pi}{n}\Big) \mi 0 \leq k \leq n\re\}\,. \label{CHEB}
\end{align}
The nodes $\Cheb_n^{1\mathrm{st}}$ are minimizers of the product $M_{P_n}(x)=\prod_{p \in P_n}|x-p|$, $|P_n|=n+1$, i.e.,
\begin{equation}\label{MIN}
 \min_{P_n \subseteq \Omega} \|M_{P_n}\|_{C^0(\Omega)}  = \frac{1}{2^n} \quad \text{for}\,\,\,P_n =\Cheb^{1\mathrm{st}}_n\,.
\end{equation}
The nodes $\Cheb_{n}^{0}$ are the extrema of $M_{\Cheb^{1\mathrm{st}}_{n+1}}$, with values oscillating between $M_{\Cheb^{1\mathrm{st}}_{n+1}}(q) \in \{-1,1\}$ for $q \in \Cheb_{n}^{0}$ \cite{gautschi,Lloyd}.

\subsection{Combining numerical accuracy with approximation ability}
The following technical detail is of central importance for the remainder of this section:
\begin{definition}[2nd essential assumption] We say that the 2nd essential assumption is fulfilled if and only if in addition to the essential assumption from Definition \ref{EA} the
generating nodes
\begin{equation}
 \mathrm{GP}= \oplus_{i=1}^m P_i\,, \quad P_i =\{p_{0,i},\dots,p_{n_i,i}\} \subseteq \R \,, \,\,\,n_i=\max_{\alpha \in A}(\alpha_i)\,,
\end{equation}
are given by  \emph{Leja-ordered}~\cite{leja} Chebyshev extreme nodes, i.e.,
$$P_i= \{p_0,\dots,p_n\} = \pm\Cheb_n^{1\mathrm{st}}, \pm \Cheb_n^{0}$$ and the following holds:
\begin{equation}\label{LEJA}
 |p_0| = \max_{p \in P}|p|\,, \quad \prod_{i=0}^{j-1}|p_j-p_i| = \max_{j\leq k\leq m} \prod_{i=0}^{j-1}|p_k-p_i|\,,\quad 1 \leq j \leq n\,.
\end{equation}
\label{EA2}
\end{definition}
Leja ordering is known to minimize numerical rounding errors in 1D Newton interpolation~\cite{breuss}. In particular, $\{p_0,p_1\} = \{-1,1\}$ holds for $\Cheb_n^{0}$, $n\geq 1$.
Consequently, as we demonstrate in Section \ref{sec:NUM}, this allows approximating highly varying functions, such as the Runge function, to machine precision.

\begin{definition}[Lebesgue constant] \label{def:LEB} Let the assumptions of Definition \ref{def:LagP} be fulfilled and $f \in C^0(\Omega,\R)$ with $Q_{f,A}(x) = \sum_{\alpha \in A}f(p_\alpha)L_{\alpha}(x)$
the interpolant of $f$ in Lagrange form.
Then we define the \emph{Lebesgue constant} as the operator norm of the interpolation operator $I_{P_A} : C^0(\Omega,\R) \lo C^0(\Omega,\R)$, $ f \mapsto Q_{f,A}$ with respect to  $P_A$, i.e.,
\begin{align*}
\Lambda(P_A) := \|I_{P_A}\|= \sup_{f\in C^0(\Omega,R)\,, \|f\|_{C^0(\Omega)}\leq 1} \|Q_{f,A}\|_{C^0(\Omega)}
             =  \Big\|\sum_{\alpha \in A} |L_{\alpha}|\Big\|_{C^0(\Omega)}
\,.
\end{align*}
\end{definition}

Since $P_A$ is unisolvent, $I_{P_A}$ is linear  and  exact on $\Pi_A$, i.e, $I_A(Q)=Q$ for all $Q \in \Pi_A$.
The Lebesgue constant provides a relative measure of the approximation quality of $P_A$ in the following sense: Let $Q^*_f \in P_A$ be an optimal polynomial approximation of $f$, then:
\begin{align}
 \|f - Q_{f,A} \|_{C^0(\Omega)} & \leq  \|f- Q^*_f \|_{C^0(\Omega)} + \|Q_{f,A} - Q^*_f\|_{C^0(\Omega)}  \nonumber\\
                                & \leq  \|f- Q^*_f \|_{C^0(\Omega)} + \|I_{P_A}(f - Q^*_f)\|_{C^0(\Omega)} \nonumber \\
                                &\leq  (1 + \Lambda(P_A))\|f- Q^*_f \|_{C^0(\Omega)} . \label{estL}
\end{align}
In dimension $m=1$, it is known that for any arbitrary sequence of interpolation nodes $P_n \subseteq \Omega$, $\Lambda(P_n)$ is unbounded. However, by choosing $P_n = \Cheb_n^{1\mathrm{st}}$ or $P_n = \Cheb_n^{0}$, the
following estimate applies~\cite{brutman,brutman2,ehlich,L0,L1,mccabe,rivlin,rivlin2}:
\begin{equation}\label{LEB}
 \Lambda(P_n)=\frac{2}{\pi}\big(\log(n) + \gamma +  \log(8/\pi)\big) + \Oc(1/n^2)\,,
\end{equation}
where $\gamma \approx  0.5772$ is the Euler-Mascheroni constant.The estimates  of \cite{cohen2}[Eq.~(3.19)] show that for $A = A_{m,n,p}$, $p=1,2$ and fixed dimension $m \in \N$, we observe algebraic growth
\begin{equation}
  \Lambda(P_{A}) = \Oc(|A|\Lambda(P_n)^m))\,,\quad |A| =  \Oc(n^m)\,.
\end{equation}
Due to Eq.~\eqref{estL}, this implies that
all functions $f$ for which the optimal approximation error $\|f- Q^*_{n,f} \|_{C^0(\Omega)} $ decreases faster than $\Oc(\Lambda(P_A)))$ can be approximated by polynomial interpolation in $P_A$.

\subsection{Approximation rate in multiple dimensions}
In practice, the question of how fast the interpolant $Q_{f,A}$ converges to $f$ is of certain interest.
Lloyd N. Trefethen recently used the famous result of Bernstein's prize-winning memoir from 1914 \cite{bernstein1912} to derive upper bounds on the
convergence rates~\cite{Lloyd2}. He assumed the function $f$ to be analytical
over generalized versions of \emph{Hooke} and \emph{Newton ellipses} \cite{arnold}. Numerical experiments suggested that
these rates are also lower bounds, with first steps already done to mathematically prove this expectation~\cite{converse}. Here we revisit these results and adapt them to our problem.

\begin{definition}
Let
$E_{h^2}^2$ be the \emph{Newton ellipse} with foci $0$ and $1$ and leftmost point $-h^2$. For $m \in \N$ and $h \in [0,1]$ we call $\rho = h + \sqrt{1 + h^2}$ the Trefethen radius
and the open region
$$ N_{m,\rho} = \li\{ (x_1,\dots,x_m) \in \C^m \mi (x_1^2 + \cdots  + x_m^2)/m \in E_{h^2}^2 \re\} $$
the \emph{Trefethen domain} \cite{Lloyd2}.
\end{definition}

We restate Lloyd N. Trefethen's statement \cite{Lloyd2} here in adapted form to match our notation. We call a continuous function  $f : \Omega \subseteq \R^m \lo\R$ analytical in the Trefethen domain
$N_{m,\rho} \subseteq \C^m$ if and only if
$f$ can be continuously extended to a function $\widetilde f : \widetilde \Omega \subseteq \C^m \lo \R$ that possesses an absolutely convergent Taylor series in $N_{m,\rho}$.

\begin{theorem}[Lloyd N. Trefethen] \label{theorem:TREF} Let the 2nd essential assumption be fulfilled with respect to $A_{m,n,p}$.
Further, assume that $f \in C^0(\Omega,\R)$ is analytical in the Trefethen domain $N_{m,\rho}$. Then
$$\|f - Q_{f,A_{m,n,p}} \|_{C^0(\Omega)} \in  \li\{\begin{array}{ll}
                                                                      \Oc_\ee(\rho^{-n/\sqrt{m}})  &\,, \quad p =1, \\
                                                                      \Oc_\ee(\rho^{-n}) &\,, \quad p =2,\\
                                                                      \Oc_\ee(\rho^{-n})  &\,, \quad p =\infty\, .
                                                                      \end{array}
\re. $$
\end{theorem}
Therefore, $g(n) \in \Oc_\ee(\rho^{-n})$ if and only if  $g(n) \in \Oc((\rho-\ee)^{-n})$ for all $\ee >0$. Thus, the above are upper bounds on the approximation rates.

\begin{remark} \label{rem:eff} The above statement indicates that the $l_2$-degree suffices for the approximation to be as good as when using the entire $P_{A_{m,n,\infty}}$ grid.
In light of Remark \ref{rem:scale}, the interpolation can therefore reach
any approximation accuracy with sub-exponential complexity $\Oc(|A_{m,n,2}|)$, $|A_{m,n,2}|\in o(n^m)$, in any dimension $m$
whenever the optimal rate for $p=2$ applies. This can be observed in the numerical experiments presented in the next section.
\end{remark}

We complete this section by extending classic approximation error estimates to arbitrary dimensions.

\subsection{Error bound in multiple dimensions}

We generalize the classic 1D approximation error bound for polynomial interpolation to arbitrary dimensions $m \in N$. For a given complete set of multi-indices $A \subseteq \N^m$,  we denote by
\begin{align*}
 \underline\partial A &= \li\{ \alpha \in A \mi \alpha + e_i \not \in A  \,\,\, \text{for all}\,\,\,  1\leq i\leq m\re\}\\
  \overline \partial A &= \li\{ \beta\in \N^m \mi \beta = \alpha + e_i \,\,\,\text{for some}\,\,\, \alpha \in \underline \partial A\,, 1\leq i\leq m\re\}
\end{align*}
the \emph{discrete inner and outer boundaries} of $A$ and define $\bar A = A \cup \overline \partial A$ to be the closure of $A$ in this sense.
If $m=1$ then for any $n \in \N$, $A = \{0,\dots,n\}$, $\underline\partial A=\{n\}$, $\overline \partial A =\{n+1\}$, and $\bar A = \{0,\dots,n+1\}$.
With this we can state:
\begin{theorem}[Approximation error] Let the essential assumptions (Definition~\ref{EA}) be fulfilled with respect to $A\subseteq \N^m$ and $P_A\subseteq \R^m$. Further, let $f \in C_{\bar A}(\Omega,\R)$.
Then, for any $ x \in \Omega$, there are $\xi_{x, \beta} \in \Omega$, $\beta \in \overline \partial A$, such that:
\begin{equation}\label{est}
 f( x)- Q_{f,A}( x)  =\sum_{\beta \in \overline \partial A}\frac{\partial^\beta f(\xi_{x,\beta} )}{\beta !} N_{\beta}(x) \,, \quad \beta! := \prod_{i=1}^m \beta_i!\,.
\end{equation}
\end{theorem}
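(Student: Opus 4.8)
The plan is to mimic the classical one–dimensional argument, in which the error of Newton interpolation of degree $n$ is written as a divided difference times the Newton polynomial $\prod_{i=0}^n(x-p_i)$, and then to iterate the recursive splitting from Theorem~\ref{theorem:PIP} and Corollary~\ref{cor:grid}. Concretely, I would fix $x\in\Omega$ and run an induction on $|A|$ that simultaneously tracks the Newton form of the error. For $|A|=1$ the interpolant is the constant $f(p_{\alpha_{\min}})$, $\overline\partial A$ consists of the unit vectors at $p_{\alpha_{\min}}$, and the statement reduces to a one–step Taylor expansion with Lagrange remainder along each coordinate axis, giving the $\xi_{x,e_i}$. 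For the inductive step I would use exactly the decomposition $A=A_1\sqcup A_2$ with $A_1=\{\alpha\in A:\alpha_m=0\}$ and the hyperplane $H=\{x_m=p_{0,m}\}$ used in the proof of Theorem~\ref{theorem:DDS}, so that $Q_{f,A}=Q_1+Q_H Q_2$ with $Q_1$ the (lifted) interpolant on $P_A\cap H$ and $Q_2$ the interpolant of $f_1=(f-Q_1)/Q_H$ on $P_A\setminus H$.

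The key computation is to split the error as
\begin{equation*}
 f(x)-Q_{f,A}(x) = \bigl(f_0(x_1,\dots,x_{m-1})-Q_{f_0,A_1}(x_1,\dots,x_{m-1})\bigr) + Q_H(x)\bigl(f_1(x)-Q_{f_1,A_2}(x)\bigr),
\end{equation*}
where $f_0(x_1,\dots,x_{m-1})=f(x_1,\dots,x_{m-1},p_{0,m})$ is the restriction to $H$ (identified with $\R^{m-1}$). Apply the inductive hypothesis in dimension $m-1$ to the first bracket with the set $A_1$: this produces points $\eta_{x,\gamma}$ and the identity $\sum_{\gamma\in\overline\partial A_1}\frac{\partial^\gamma f_0(\eta_{x,\gamma})}{\gamma!}M_\gamma(x')$, where $M_\gamma$ are the $(m-1)$-variate Newton polynomials; under the embedding $i_{m-1}^*$ these become the $N_\gamma$ with $\gamma\in\overline\partial A_1$ whose last coordinate is $0$, and $\partial^\gamma f_0 = \partial^\gamma f(\cdot,p_{0,m})$. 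Apply the inductive hypothesis (now in the smaller set $A_2$, same dimension $m$) to the second bracket, giving $\sum_{\delta\in\overline\partial A_2}\frac{\partial^\delta f_1(\zeta_{x,\delta})}{\delta!}K_\delta(x)$ with $K_\delta$ the Newton polynomials for $A_2$; multiplying by $Q_H$ and using $Q_H K_\delta = N_\delta$ (as in the proof of Theorem~\ref{theorem:DDS}) puts these terms into the desired $N_\delta$ form. The bookkeeping lemma is that $\overline\partial A = \{\gamma\in\overline\partial A_1 : \gamma_m=0\}\sqcup\overline\partial A_2$ as a disjoint union, which follows directly from completeness of $A$ and the definition of the discrete outer boundary; this is what makes the two pieces combine into a single sum over $\overline\partial A$.

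The main obstacle is the last bracket, $f_1-Q_{f_1,A_2}$, because $f_1=(f-Q_1)/Q_H$ is only as smooth as the quotient allows, and we need its derivatives $\partial^\delta f_1$ expressed through $\partial^\beta f$ for $\beta\in\overline\partial A$ — in particular we must show $f_1\in C_{\overline\partial A_2\cup\cdots}$ and convert $\partial^\delta f_1(\zeta_{x,\delta})/\delta!$ into $\partial^\beta f(\xi_{x,\beta})/\beta!$ for the appropriate $\beta$. The clean way around this is to not divide at all, but instead to carry through the argument at the level of divided differences: use the fact (Definition~\ref{def:DDS}, Eq.~\eqref{F}) that the relevant coefficient is the divided difference $F_{\beta,\beta_j,j}$ of $f$ itself, and invoke the one–dimensional Hermite–Genocchi / mean–value property of divided differences coordinate by coordinate to replace each divided difference of $f$ in the $x_m$-direction (and inductively in each lower direction) by a value $\partial^\beta f(\xi_{x,\beta})/\beta!$ at an intermediate point $\xi_{x,\beta}\in\Omega$. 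Thus I would phrase the induction directly in terms of the divided-difference representation of the remainder rather than the ratio $f_1$, which sidesteps the regularity issue and makes the emergence of $\beta!$ and of the points $\xi_{x,\beta}$ transparent.
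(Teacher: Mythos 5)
Your proposal follows the same skeleton as the paper's proof: induction on $|A|$, base case via a coordinate-wise mean-value/Taylor argument, and an inductive step built on the split $A = A_1 \sqcup A_2$ with the hyperplane $H = \{x_m = p_{0,m}\}$ from Theorems~\ref{theorem:PIP} and~\ref{theorem:DDS}. The place where you genuinely diverge is exactly the step you flag as ``the main obstacle.'' The paper handles the second bracket by first invoking the Mean Value Theorem to replace $\frac{f(x) - f(x_H)}{x_m - p_{0,m}}$ by $\partial_{x_m} f(\eta_x)$, and then applying the inductive hypothesis directly to $\partial_{x_m} f(\eta_\cdot) - Q_2$, writing the result with $\partial^\beta \partial_{x_m} f$. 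This leans on the fact that $Q_2$ interpolates the smooth quotient $g(x) = \frac{f(x)-f(x_H)}{Q_H(x)}$ (which coincides with $f_1$ at every node of $P_A \setminus H$, thanks to downward-closedness), but it treats the intermediate point $\eta_x$ rather cavalierly: the inductive hypothesis legitimately applies to the smooth function $g$, not to $x\mapsto\partial_{x_m}f(\eta_x)$, and converting $\partial^\gamma g$ into $\partial^{\gamma+e_m} f$ with the right factorial requires a further mean-value argument that the paper compresses into one display. Your alternative — carrying the remainder as a multivariate divided difference of $f$ and converting to derivatives only at the very end via the Hermite–Genocchi / mean-value property, one coordinate at a time — is precisely the classical way to make this step airtight, and it is what one does in the $1$D Newton error proof anyway. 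The trade-off is that you must then do the factorial bookkeeping explicitly. Two smaller points you should tighten: (i) $A_2$ is not downward closed, so to invoke the inductive hypothesis you must pass to $A_2' = A_2 - e_m$ with the generating grid $\{p_{1,m},p_{2,m},\dots\}$ in the last coordinate (as is implicit in $Q_H K_\alpha = N_\alpha$); and (ii) your boundary-splitting lemma should be stated after this re-indexing, since $\overline\partial A$ is decomposed into the lifted $\overline\partial A_1$ and the $e_m$-shift of $\overline\partial A_2'$, a decomposition that requires the completeness of $A$ to verify. Neither issue undermines your plan; they are the same bookkeeping the paper suppresses.
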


\begin{proof} We argue by induction  on $|A|$. For $|A|=1$ we have $A = \{0\}$ and $Q_{f,A}(x)= f(p_{0})$ for all $x \in \Omega$. Thus, by the Mean Value Theorem, we have
$$ f(x) - Q_{f,A}(x)= f(x) - f(p_0) = \sum_{i=1}^m \partial_{x_i} f(\xi_{x,i})(x_i-p_{0,i})$$
for some $\xi_{x,i}\in \Omega$, yielding Eq.~\eqref{est} in this case. For $|A|>1$, we consider the subsets $A_1=\li\{\alpha \in A \mi \alpha_m =0\re\}$, $A_2 = A \setminus A_1$, and the hyperplane $H =\{(x_1,\dots,x_{m-1},p_{0,m})\mi (x_1,\dots,x_{m-1})\in \R^{m-1}\}$ defined by the polynomial $Q_{H}=(x_m-p_{0,m}) \in \Pi_{m,1,1}$.
As in Eq.~\eqref{split}, we use the splitting $Q_{f,A}(x)=Q_1(x) +Q_H(x)Q_2(x)$, $x \in \R^m$.
We denote by $x_H=(x_1,\dots,x_{m-1},p_{0,m})$ the projection of $x=(x_1,\dots,x_m)\in \R^m$ onto $H$
and recall that Theorem \ref{theorem:PIP} guarantees $Q_1(x) =Q_1(x_H)$. By decreasing $m$ if necessary and w.l.o.g., we can assume that $A_2 \not = \emptyset$. We denote by $K_{\beta}(x)$ the multivariate Newton polynomials with respect to $A_2$ and recursively compute
\begin{align}
f(x)- Q(x)  &= f(x_H) - Q_1(x_H) + Q_H(x)\left(\frac{f(x) -f(x_H)}{x_m - p_{0,m}}-Q_2(x)\right) \label{E1}\\
            &= \sum_{\beta \in \overline \partial A_1}\frac{\partial^\beta f(\xi_{x_H,\beta})}{\beta !} N_{\beta}(x_H)   + \Big(\partial_{x_m} f(\eta_x) -Q_2(x)\Big)Q_H(x) \nonumber \\
            &= \sum_{\beta \in \overline \partial A_1}\frac{\partial^\beta f(\xi_{x_H,\beta})}{\beta !} N_{\beta}(x)   +
                \sum_{\beta \in \overline \partial A_2}\frac{\partial^\beta\partial_{x_m} f(\xi_{x,\beta})}{\beta !}Q_H(x)K_{\beta}(x) \nonumber \\
            &= \sum_{\beta \in \overline \partial A}\frac{\partial^\beta f(\xi_{x,\beta} )}{\beta !} N_{\beta}(x)\,, \label{E2}
\end{align}
where we used the Mean Value Theorem for the second term in Eq.~\eqref{E1} and the fact that $Q_H(x)K_{\beta}(x)= N_{\beta(x)}$ for all $\beta \in \overline \partial A_2$  to yield Eq.~\eqref{E2}.
\end{proof}

\begin{remark}
Note that for $m=1$, Eq.~\eqref{est} reduces to the classic 1D result
$$  f(x)- Q_{f,A}(x)  =\frac{ f^{(n+1)}(\xi_{x})}{(n+1)!} \prod_{i=0}^n (x- p_i)  $$
with $P_A=\{p_0,\dots,p_n\}$, $|A|=n+1$.
This yields the known approximation error bound in 1D~\cite{gautschi}:
\begin{equation}\label{CE}
  |f(x)- Q_{f,A}(x)|  \leq \frac{ |f^{(n+1)}(\xi_{x})|}{2^n(n+1)!}  \leq \frac{ \|f^{(n+1)}\|_{C^0(\Omega)}}{2^n(n+1)!}\,, \,\,\,P_A =\Cheb_n^{1\mathrm{st}}\,.
\end{equation}
\end{remark}

Our result in Eq.~\eqref{est} provides a similar bound on the approximation error in $m$D whenever the $k$-th derivatives of $f$ are known or bounded.
However, usually these bounds are unknown. By validating the proposed Trefethen approximation rates in the next section, we even though provide a potential
control of the approximation error being applicable in practice.

\section{Numerical experiments}\label{sec:NUM}
We implement a prototype of our multivariate interpolation solver, named \emph{MIP}, in MATLAB. The code implements the multivariate divided difference scheme of Definition \ref{def:DDS}
for interpolation nodes $P_A$, $A= A_{m,n,p}$, generated by Leja-ordered Chebyshev extreme nodes, i.e.,
$\mathrm{GP} = \oplus_{i=1}^m\Cheb_n^{0}$
according to the 2nd essential assumption in Definition~\ref{EA2}.
We compare our solver with the following alternative methods:
\begin{enumerate}
 \item \emph{Chebfun} from the corresponding MATLAB package \cite{chebfun};
 \item \emph{Cubic splines} and \emph{$5^{th}$-order splines} from the MATLAB \emph{Curve Fitting Toolbox};
 \item \emph{Floater-Hormann} interpolation \cite{floater} from the R package \emph{chebpol} \cite{gaure};
 \item \emph{Multi-linear} (piecewise linear) interpolation  from  \emph{chebpol} \cite{gaure};
  \item \emph{Chebyshev} interpolation of $1^\text{st}$ kind from  \emph{chebpol} \cite{gaure};
  \item \emph{Uniform} (grid) interpolation by Chebyshev polynomials from  \emph{chebpol} \cite{gaure};
 \item \emph{Vandermonde} interpolation on $P_{A_{m,n,p}}$ in MATLAB.
\end{enumerate}
Note that apart from \emph{MIP} and \emph{Vandermonde}, all other schemes use regular grids $P_{A_{m,n\infty}}$ as interpolation nodes. Therefore, \emph{Chebfun} and \emph{Chebyshev} only deliver $l_\infty$-degree interpolations.

All implementations were benchmarked using MATLAB version R2019b, Chebfun package version 5.7.0, and  R versions 3.2.3/Linux and 3.6.2/macOS with chebpol package version 2.1.2 on a standard personal
computer (Intel(R) Xeon(R) CPU E5-2660 v3 @2.60GHz, 128GB RAM).

The code and all benchmark datasets are freely available from:
\emph{https://git.mpi-cbg.de/mosaic/polyapprox}. The implementation of \emph{MIP} is provided as a prototype, which can be used to reproduce the results presented here.
Currenlty, we optimize and re-implement the code from a software engineering perspective into an open source Python package including further algorithmic improvements,
such as the \emph{multivariate barycentric Lagrange interpolation} \cite{berrut} discussed in Section \ref{sec:Bary}.

\subsection{Approximation on the hypercube}\label{AHC}

In the first set of experiments, we illustrate the statements of Theorem˜\ref{theorem:TREF}. Therefore,
we consider the Runge function
$$f_{R}(x) =\frac{1}{1+10\|x\|^2}\,.$$
Observe that $f_R$ has poles in $z=\pm i/ \sqrt{10} \in \C$.
In light of this fact, one can show that $f_R$ is analytic in the Trefethen domain $N_{m,\rho}$ with $h = 1/\sqrt{10} \approx 0.316$ and with Trefethen radius $\rho = h + \sqrt{1 +h^2} \approx 1.365$, thus fulfilling the requirements of Theorem~\ref{theorem:TREF}.

\begin{experiment} We measure the approximation errors of the interpolants computed by the mentioned methods.
To do so, we sample 100 randomly nodes $P\subseteq \Omega$, $|P|=100$,  independently generated for each degree, but identical for all methods  and  determine $\max_{q\in P}|f(q) -Q_{f}(q)| \approx \| f -Q_{f} \|_{C^0(\Omega)}$ .
\end{experiment}

Figure \ref{Rate_2D} shows the results of this experiment in dimension $m=2$. We observe that
\emph{Chebyshev},
\emph{Chebfun}, and \emph{MIP} are the only methods that converge down to machine precision (32-bit double-precision arithmetics). The convergence rate is as stated in
Theorem \ref{theorem:TREF} and reproduces earlier results by Lloyd N.~Trefethen~\cite{Lloyd2} as introduced in Section \ref{intro}.
However, we only use the $P_{A}$, $A=A_{m,n,p}$, $p=1,2$, unisolvent nodes to determine the interpolants, whereas Trefethen computed the rates for the $l_1$- and $l_2$-degree
approximations by regression over the whole $l_\infty$-grid.
This detail might be the reason for the slight advantage of \emph{MIP} over \emph{Chebfun} and \emph{Chebyshev}
for high degrees.
Further, we recognize that the \emph{Vandermonde approach} is inaccurate and even becomes numerically unstable (rising errors) for higher degrees. It is therefore inappropriate for
approximating strongly varying functions, such as the Runge function. As expected, (Chebyshev) polynomial interpolation on uniform grids (\emph{uniform}) and \emph{multi-linear} interpolation also do not converge.

Finally, we observe that \emph{Floater-Hormann interpolation} performs better than multivariate cubic splines. It is comparable to \emph{$5^{th}$-order splines},
but reaches an accuracy of $10^{-7}$ faster than any other approach.

\begin{figure}[t!]
\includegraphics[scale=0.25]{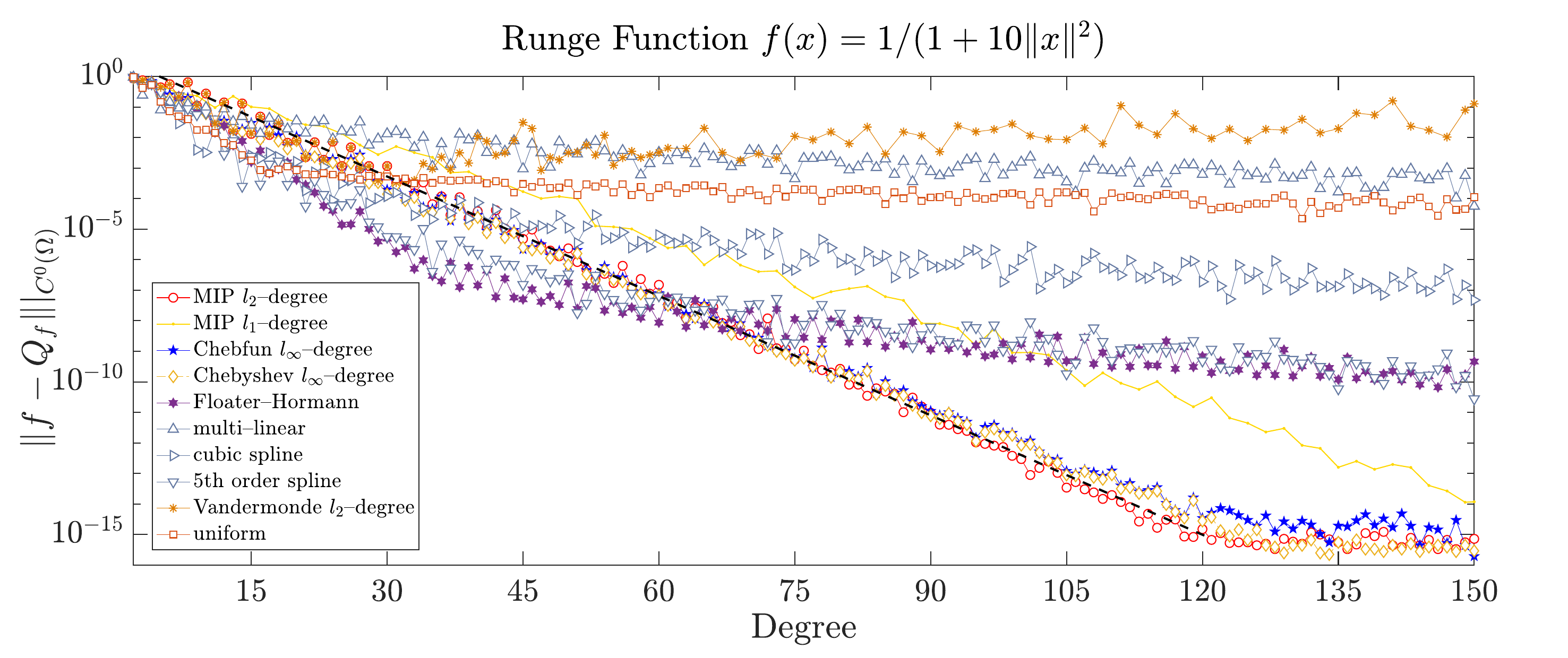}
\vspace{-0.5cm}
\caption{Approximation errors for the benchmarked methods interpolating the Runge function in dimension $m=2$. } \label{Rate_2D}
\end{figure}
\begin{figure}[t!]
 \includegraphics[scale=0.25]{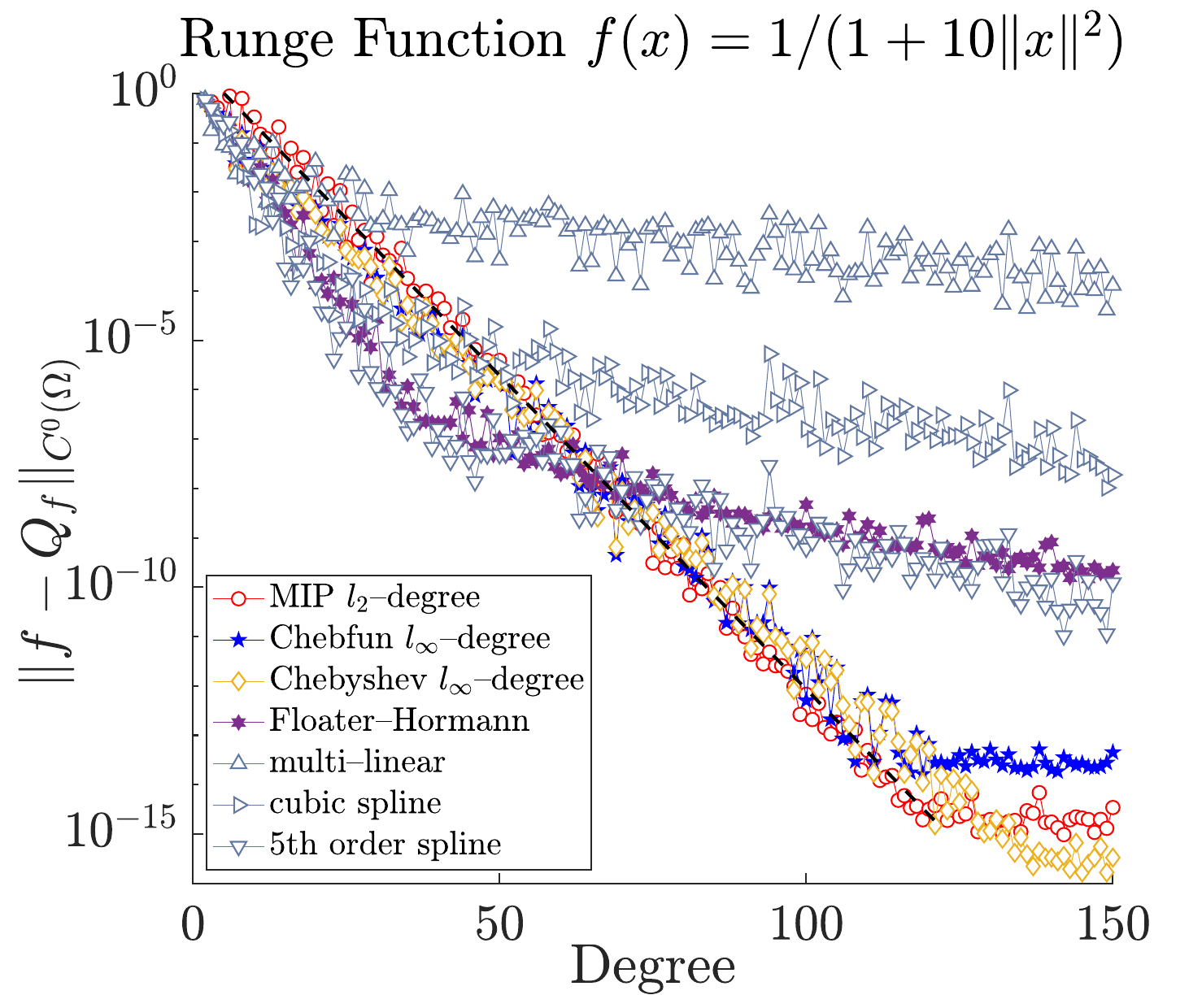}
  \includegraphics[scale=0.25]{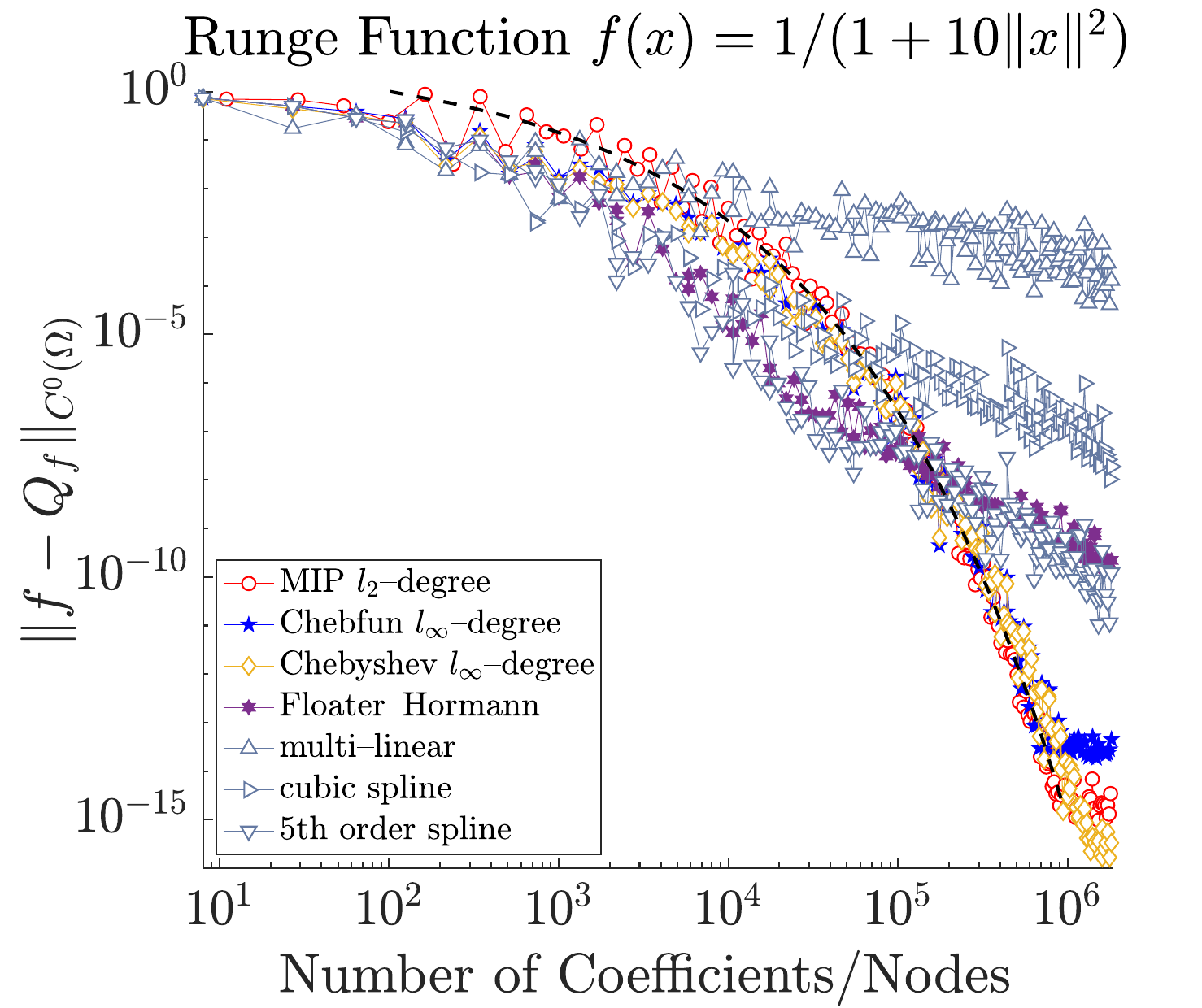}
\caption{Approximation errors for the benchmarked methods interpolating the Runge function in dimension $m=3$. } \label{Rate_3D}
\end{figure}

Figure \ref{Rate_3D} shows the results of the same experiment in dimension $m=3$, leaving out the infeasible methods.
The observations made in 2D remain valid. However, \emph{Floater-Hormann} becomes indistinguishable from \emph{$5^{th}$-order splines}.
Further, when considering the amount of coefficients/nodes required to determine the interpolant, plotted in the right panel (with logarithmic scales on both axes). The polynomial convergence rates of \emph{Floater-Hormann} and all
\emph{spline-type} approaches become visible.
\emph{MIP} requires  $122^3/899028 \approx 2$-times less coefficients/nodes than \emph{Chebyshev} or
\emph{Chebfun} to approximate $f$ to machine precision for $n=121$.


Figure \ref{Rate_4D} shows the results for dimension $m=4$. Spline interpolation was not able to scale to high degrees due to computer memory requirements.
To simulate the behavior for higher degrees we rescale the hypercube to $\frac{1}{\sqrt{10}} \Omega = [-\frac{1}{\sqrt{10}},\frac{1}{\sqrt{10}}]^m$. That is, we approximate the Runge function on two scales:
Once for Runge factor $RF =10$ and once for $RF=1$.
The results for $RF=10$ show a similar situation as the results in 3D.
However, Figure \ref{Rate_3D} suggests that again degree $n \approx 75$ is the
crossover point, where \emph{MIP} and \emph{Chebyshev} become the superior approaches. Indeed,
for $RF=1$, only \emph{Chebyshev} and \emph{MIP} converge down to machine precision.
But \emph{MIP} reaches that goal earlier ($n=40/47$) than  \emph{Chebyshev}, and with less interpolation nodes
$ \frac{|C_{\mathrm{Chebyshev}}|}{|C_{\mathrm{MIP}}|} = \frac{5308416}{858463} \approx 6$.

\begin{figure}[t!]
\includegraphics[scale=0.25]{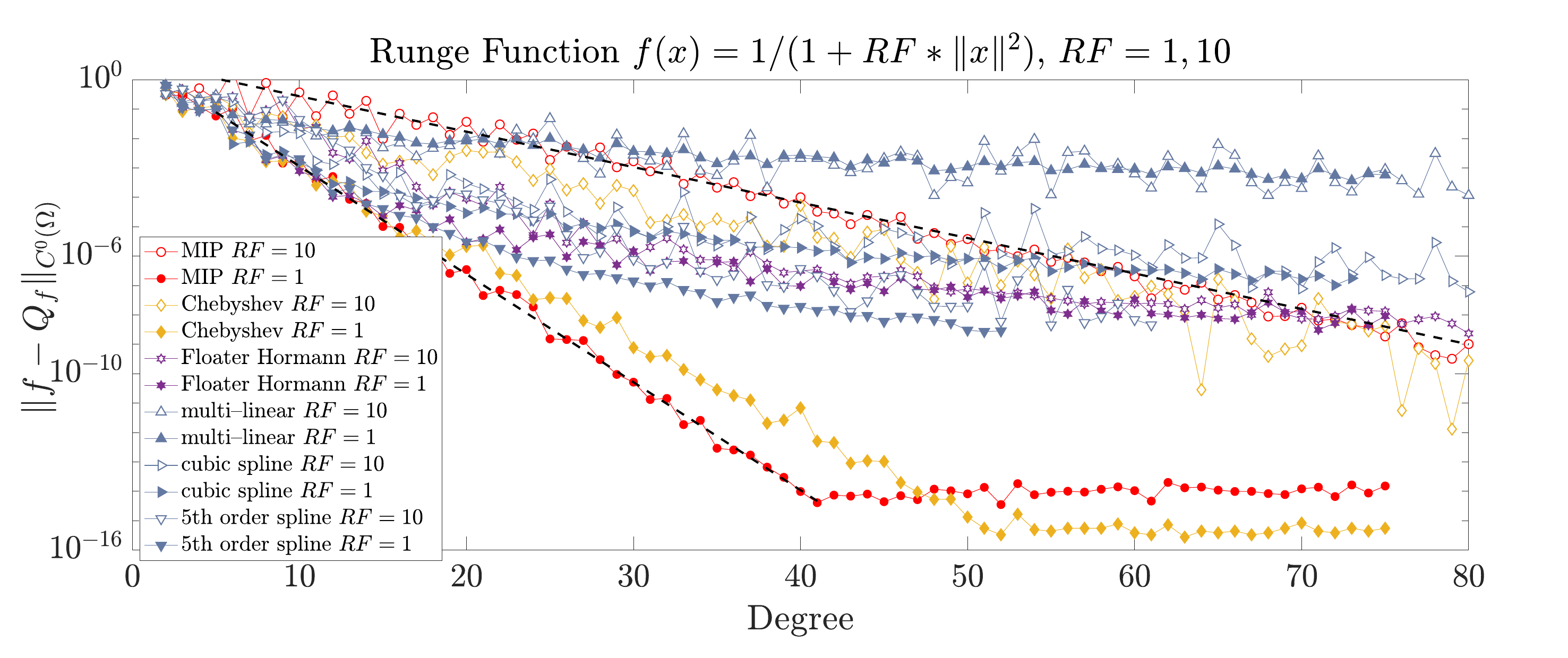}
\vspace{-0.5cm}
\caption{Approximation errors for the benchmarked methods interpolating the Runge function in dimension $m=4$. } \label{Rate_4D}
\end{figure}
\begin{figure}[t!]
  \includegraphics[scale=0.25]{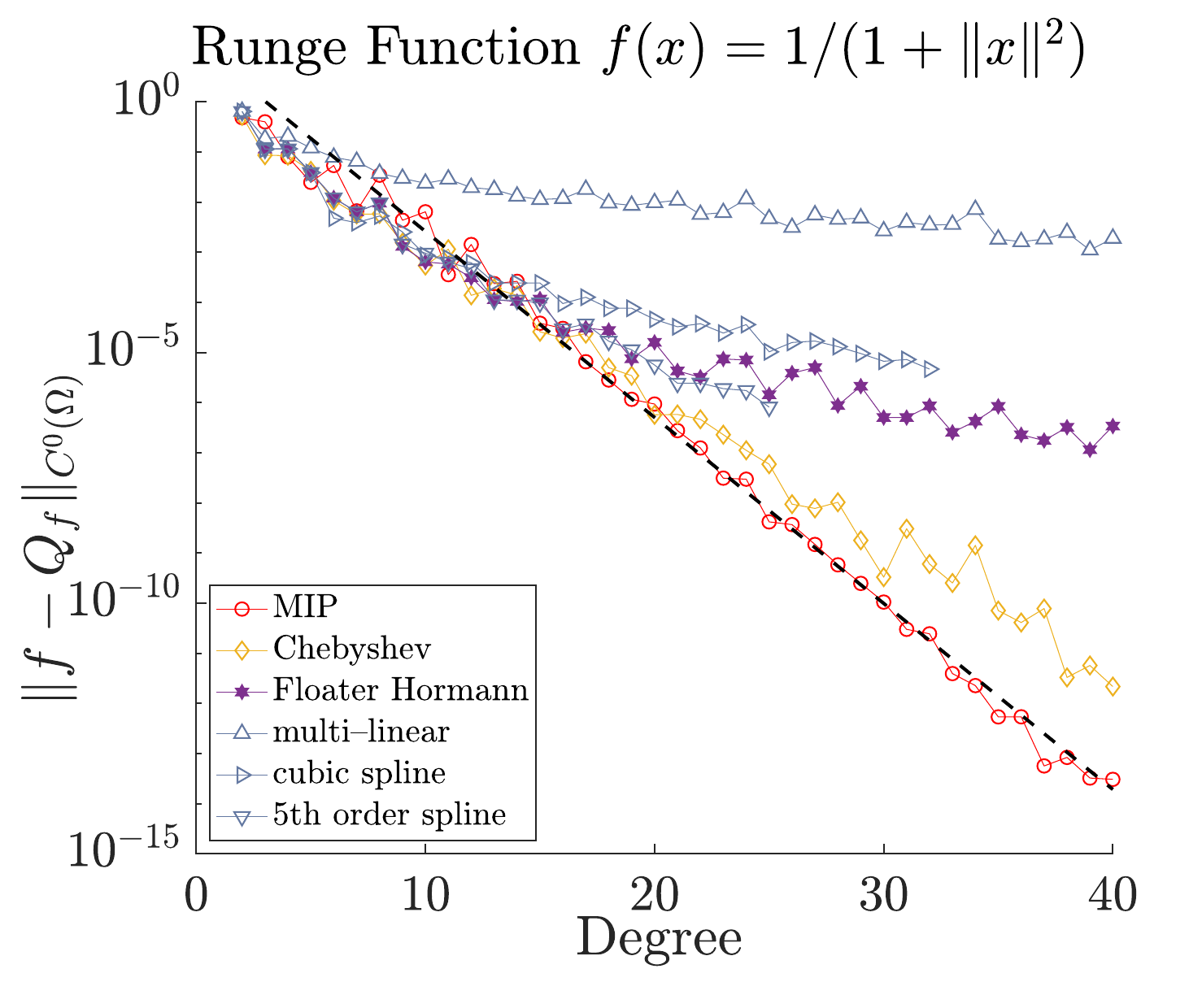}
  \includegraphics[scale=0.25]{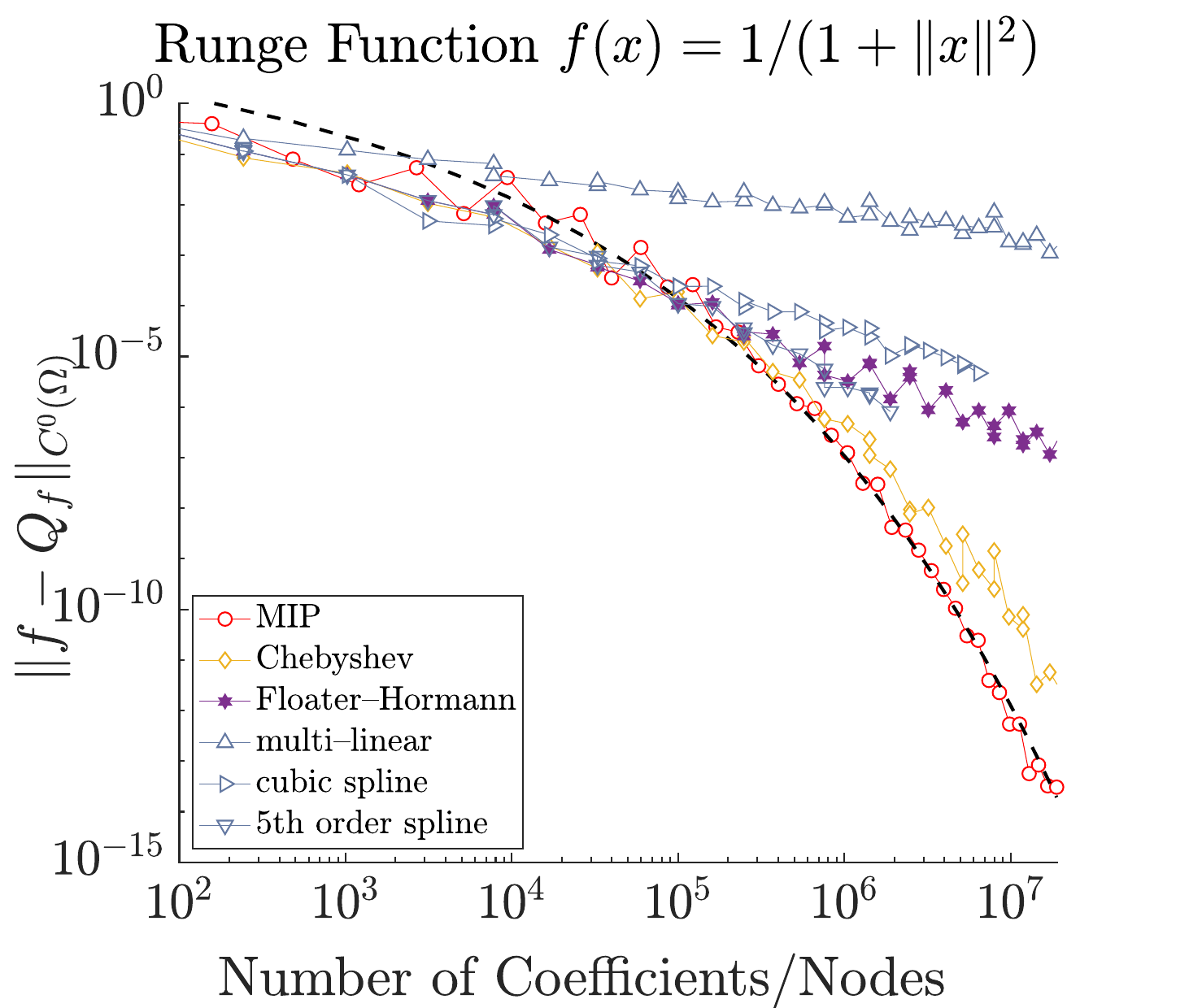}
\caption{Approximation errors for the benchmarked methods interpolating the Runge function in dimension $m=5$. } \label{Rate_5D}
\end{figure}

The same is true in dimension $m=5$, as Fig.~\ref{Rate_5D} illustrates. Especially when considering the right plot (with logarithmic scales on both axes), we observe that
\emph{MIP} best resists the curse of dimensionality by yielding 2 orders of magnitude better accuracy than \emph{Chebyshev} for $n=40$  ($3.0\cdot 10^{-14}$ vs.~$2.1\cdot 10^{-12}$) with less interpolation nodes
$ \frac{|C_{\mathrm{Chebyshev}}|}{|C_{\mathrm{MIP}}|} = \frac{115856201}{18920038} \approx 6$.

To assess the convergence rates, we fit the data for \emph{MIP} with the model $y = c\rho^{-n}$ with an $R$-squared of $0.99$ or better, indicated by the dashed lines in the corresponding figures.
This yields the exponential decays reported in Table~1 for the approximation errors in the corresponding ranges.
\begin{table}[t!] \label{Trate}
  \begin{tabular}{ccccccc}
dim & fitting range  & $\rho_{RF=10}$ & $c_{RF =10}$ &  fitting range & $\rho_{RF=1}$ & $c_{RF =1}$   \\
 \hline
 2 & $ 2 \sim 121$  & $1.35$ & $4.30$  & & & \\
 3&  $2 \sim 121$ & $1.34$  & $4.41$   & & &\\
 4 & $ 2 \sim 80$  & $1.32$ & $4.42$ & $ 2 \sim 40$  &2.33 & 5.40\\
 5 &  & & & $ 2 \sim 40$ & 2.35 & 13.37
\end{tabular}
\caption{Fitted convergence rates of \emph{MIP}.}
\vspace{-0.5cm}
\end{table}
The Trefethen radius  is given by $\rho_\text{max} = 1 + \sqrt{2} \approx 2.41$ for $RF =1$. Thus, the upper bounds in Theorem \ref{theorem:TREF} are almost achieved by the MIP method. In contrast, \emph{Chebyshev}  just reaches convergence rates
$\rho_{RF=1} \approx 1.9$ in dimension $m=4,5$.

In summary, Experiment 1 confirms that \emph{MIP} converges as expected from Theorem \ref{theorem:TREF}.
Compared to the other methods tested, \emph{MIP} is  efficient in reaching machine precision.
\emph{MIP} also seems to resistant to the curse of dimensionality best, which becomes increasingly visible in higher dimensions, thus supporting the prediction of Remark \ref{rem:eff}.

\subsection{Regression  on curved manifolds}\label{sec:NUM2}

We realize the algorithm of Carl de Boor and Amon Ros \cite{deBoor2,deBoor} in terms of Corollary \ref{cor:Torus}  in case of the torus $M=\T^2_{R,r}$.  That is, we consider
$$ Q_{\T^2_{R,r}}(x,y,z) = \left(x^{2}+y^{2}+z^{2}+R^{2}-r^{2}\right)^{2}-4R^{2}\left(x^{2}+y^{2}\right)$$
with $R=0.7$ and $r=0.3$. $\T^2_{R,r} = Q_{\T^2_{R,r}}^{-1}(0)$ is an algebraic hypersurface of degree 4. Given a function $f : \Omega \lo \R$,
we aim
to interpolate the restriction $f_{|\T^2_{R,r}}$.

\begin{experiment}\label{exp:torus} We choose $A=A_{m,n,p}$ with $m=3$, $p=2$, and sample
$S=\lfloor 1,5\cdot |A|\rfloor \in \N$ uniformly random nodes $P_{\T^2_{R,r}}$ on the torus $\T^2_{R,r}$, as illustrated in Figure \ref{Torus} (left). Further, we generate $P_A$ with respect to
$$\mathrm{GP} =  \Cheb_n^{0} \oplus   \Cheb_n^{0}  \oplus  r\cdot \Cheb_n^{0} \,,$$
yielding a feasible grid near the sampled nodes $P_{\T^2_{R,r}}$. Since $\T^2_{R,r}$ is a hypersurface of degree $4$, the nodes $p\in P_{\T^2}$ are not unisolvent for $\Pi_{A_{m,n,p}}$ with $n\geq 4$. Thus, due to Corollary \ref{cor:Torus}
$$m_M:= \{Q \in \Pi_A \mi Q_{|{M}} \equiv 0\} \not = \{0\}\,, \quad M = \T^2_{R,r}\,.$$
However, by Remark \ref{PROP1} and Theorem \ref{theorem:Dual} the splitting
$$\Pi_A \cong \Pi_A \big / m_{M}  + m_M   =: \Pi_{A,\T^2}  +  \Pi_{A,\T^2}^\perp $$
holds with probability $1$.
For $A=A_{3,4,2}$, we have $\dim \Pi_{A,\T^2}^\perp =1$ and by computing a basis $\mu \in \Pi_A$ of $\dim \Pi_{A,\T^2}^\perp$  as in Eq.~\eqref{kernel} we have determined a level-set function
$Q_{\T^2_{R,r}}=\mu$, i.e.,  $\T^2_{R,r} = Q_{\T^2_{R,r}}^{-1}(0)$.

Further, considering $R_A =(L_\alpha(p_i))_{(i,\alpha) \in S \times A }$, where $L_\alpha$, $\alpha \in A$ denote the Lagrange polynomials w.r.t.~$P_A$, we find the Lagrange coefficients $C_{\mathrm{Lag}}$
of the interpolant $Q_{f,A | \T^2_{R,r}}$  of $f_{|\T^2_{R,r}}$ for any $f : \Omega \lo\R$  by solving
\begin{equation*}
 R_AC_{\mathrm{lag}} \approx F\,,\quad  F =(f(p_1),\dots,f(p_{S})) \in \R^S\,, \quad p_i \in P_{\T^2_{R,r}}\,, \,\,\,1 \leq i \leq S
\end{equation*}
using standard MATLAB least-square regression.
\end{experiment}

\begin{figure}[t!]
\includegraphics[scale=1]{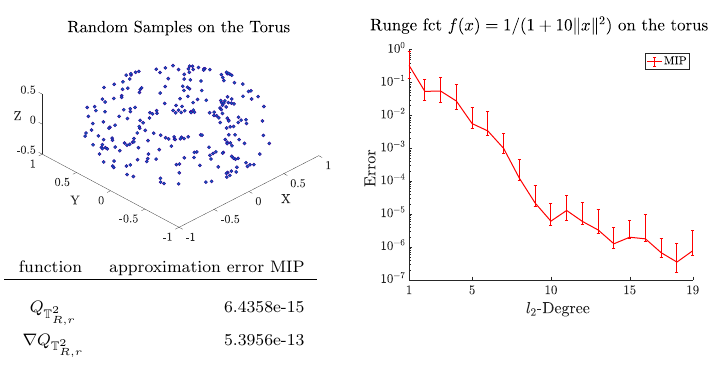}
\vspace{-0.25cm}
\caption{Uniformly random nodes on the torus $\T^2_{R,r}$ with $R=0.7$ and $r=0.3$ (left), approximation errors of the level set function $Q_{\T^2_{R,r}}$ and its gradient $\nabla Q_{\T^2_{R,r}}$ (table) and
approximation error for the restriction $f_{|\T^2_{R,r}}$ of Runge function to the torus (right).}
 \label{Torus}
\end{figure}

The results of this experiment are shown in Fig.~\ref{Torus}. All errors  are measured on the $S$ nodes $P_{\T^2_{R,r}}$, plus additional 200 uniform at random sampled points $P \subseteq \T^2_{R,r}$
for 10 independent repetitions in each case.
The inset table in Fig.~\ref{Torus} shows that the approximation errors for the level-set function $Q_{\T^2_{R,r}}$ and its gradient $\nabla Q_{\T^2_{R,r}}$, corresponding to the surface normal, are within machine precision.
The right panel of Fig.~\ref{Torus} shows the mean approximation errors  with min--max error bars for interpolating the restriction $f_{R|\T^2_{R,r}}$ of the Runge function  $f(x) = 1/(1+10\|x\|^2)$ to the torus,
measured analogously.

The results suggest that the notion of the matrix $R_A$ defined with respect to the unisolvent grid $P_A$ is the main reason for the fast approximation rate on the Runge function.
Implicitly, we therefore also validate the numerical stability of the pre-computation approach.

We are not aware of any other approach that can handle such an interpolation task without requiring a \emph{triangulation} or \emph{parameterization} of the manifold $\T^2_{R,r}$.
Therefore, these results demonstrate the flexibility of \emph{MIP} and suggest its use in numerical methods on curved surfaces, including quadrature schemes, ODE and PDE solvers,  and optimization algorithms.

\section{Conclusion}

We have generalized the notion of unisolvent nodes for polynomial interpolation in arbitrary dimensions with respect to a generalized concept of polynomial degree and for non-tensorial node distributions.
This allowed us to extend the classic 1D Newton and Lagrange interpolation methods to multivariate
schemes in a numerically stable and efficient way, resulting in a practically implemented algorithm with $\Oc(|A|^2)$ runtime complexity and $\Oc(|A|)$ memory complexity.
We also provided the theory for generalizing the approach to interpolation of scattered data on curved manifolds.

We empirically observed that the resulting algorithm reaches the optimal approximation rate given by Lloyd N.~Trefethen's Theorem \cite{Lloyd2}, supporting the conjecture that these rates apply in general \cite{converse}.
In contrast to previous approaches, such as Chebfun \cite{chebfun}, multivariate splines \cite{Boor:SP}, and Floater-Hormann interpolation \cite{floater}, the present
\emph{MIP} algorithm achieves exponential approximation rates for the Runge function using only sub-exponentially many interpolation nodes.
This suggests that we have found an efficient approximation scheme that overcomes the curse of dimensionality for a generic class of functions.

In closing, we discuss related concepts and possible further developments.

\subsection{Generalizing the notion of unisolvence}
We generalized the notion of unisolvence beyond the pioneering work of Kuntzmann, Guenther, and Roetman~\cite{Guenther,kuntz}. However, an even more general notion should be possible by
considering (more) general
hypersurfaces $\widetilde H$ instead of the linear hyperplanes $H \subseteq \R^m$ used in Theorem \ref{theorem:UN}. This might also enable the native construction of unisolvent nodes on general level-set manifolds
and to derive a Newton interpolation scheme on curved spaces.

\subsection{Barycentric Lagrange interpolation}\label{sec:Bary}

In 1D, barycentric Lagrange interpolation is the most efficient interpolation scheme \cite{berrut} for fixed nodes. Both
determining the interpolant $Q_{f,n}$ and evaluating $Q_{f,n}$ at any given $x_0 \in \R$ require linear time $\Oc(n)$.
This is achieved by precomputing the constant \emph{barycentric weights} that only depend on the locations of the nodes, but not on the function $f$.

We have already established preliminary theoretical results towards generalizing this approach to $m$D for the case of $l_1$-degree~\cite{sivkin}.
This suggests that the current complexity  $\Oc(|A|^2)$ of interpolation and evaluation for the case of multi-indices $A =A_{m,n,p}$, $p>1$, can be further reduced.

\subsection{Multivariate polynomial regression} \label{sec:GEO}

For a given function $f : \Omega \lo \R$ and a set of nodes $P\subseteq \Omega =[-1,1]^m$, $m\in\N$, we consider the graph $G = f(P) \times P \subseteq  \R^{m+1}$. Corollary \ref{cor:Torus} and Experiment \ref{exp:torus}
allow identifying the hypersurface $M\supseteq G$ that contains $G$ as a level set of polynomials $Q_{M,i} \in \Pi_{m+1}$, $M = \cap_{i=1}^{n+1-\dim M} Q_{M,i}^{-1}(0)$
and to interpolate the restriction $g_{|M}$ of any function $g :  \R^{m+1} \lo \R$ to $M$.
We consider both aspects crucial steps towards developing a multivariate polynomial regression scheme,
opening up applications in multivariate analysis, statistics, and topological analysis \cite{anderson1958introduction,persitant_SURVEY,friedman2001elements,mardia1979,mardiakent,shestopaloff2017new,persitant}.

\subsection{Trigonometric interpolation}
In 1D, \emph{trigonometric Clairaut--Lagrange--} \linebreak \emph{Gauss interpolation} can be used to compute the discrete Fourier transform (DFT) of a periodic function $f$ \cite{gaussFFT2}.
This fact was for example used in the development of the famous \emph{Cooley-Tukey algorithm} \cite{cooley}, which re-instantiated an algorithm by Carl F.~Gauss \cite{gaussFFT}
to yield a modern realization of the \emph{Fast Fourier Transform}.
These concepts are also closely related to the invention of \emph{wavelets} \cite{strang1994wavelets}. Revisiting these aspects from the perspective of
multivariate Lagrange interpolation might be worthwhile to make progress  in open problems, such as Fourier transformation of non-periodic, highly oscillating signals or fast Fourier transforms on scattered nodes \cite{barnett2019parallel,greengard2004accelerating,IEEE}.

\subsection{Numerical integration}
Until today, the classic \emph{Gauss quadrature} formula is the best approach to approximating integrals $I_{\mathrm{Gauss}}(f) \approx \int_\Omega f(x) \, \mathrm{d}x$ in one variable~\cite{gaussQUAD,jacobi}.
Many contributions toward extending this approach to higher dimensions have been made \cite{cools2002,cools,hammer1956numerical,stroud}.
This list is by no means exhaustive, and research in this direction is actively ongoing. The present notion of unisolvent nodes might be helpful in this endeavor.

\section*{Acknowledgements}
Leslie Greengard, Christian L.~Mueller, Alex Barnett, Manas Rachh, Heide Meissner,  Uwe Hernandez Acosta, and Nico Hoffmann are deeply acknowledged for their inspiring hints and helpful discussions.
Further, we are grateful to Michael Bussmann and thank the whole CASUS institute (G\"{o}rlitz, Germany) for hosting stimulating workshops on the subject.

\bibliographystyle{amsplain}
\bibliography{Ref.bib}   

\end{document}